\documentclass[a4paper, 12pt]{article}

\bibliographystyle{plain}

\usepackage{amsmath,amssymb,amsfonts}
\usepackage{amsthm}
\usepackage[abbrev]{amsrefs}
\usepackage[top=20truemm,bottom=20truemm,left=20truemm,right=20truemm]{geometry}
\usepackage{empheq}
\usepackage{color}
\usepackage[title]{appendix}

\synctex=1

\numberwithin{equation}{section}
\theoremstyle{plain}
\newtheorem{theorem}{Theorem}[section]
\newtheorem{lemma}[theorem]{Lemma}
\newtheorem{corollary}[theorem]{Corollary}
\newtheorem{proposition}[theorem]{Proposition}
\theoremstyle{definition}

\newtheorem{remark}[theorem]{Remark}




\newcommand{\Complex}{ \mathbb{C} }

\newcommand{\Integer}{ \mathbb{Z} }

\newcommand{\Real}{ \mathbb{R} }

\newcommand{\Set}[2]{\left \{#1 \, ; \, #2 \right \}}

\newcommand{\Torus}{ \mathbb{T} }

\title{Data Assimilation to the Primitive Equations with $L^p$-$L^q$-based Maximal Regularity Approach}

\author{Ken Furukawa \thanks{Institute of physical and chemical research (RIKEN), ken.furukawa@riken.jp} 
\thanks{The author was partly supported by RIKEN Pioneering Project “Prediction for Science” and JSPS Grant-in-Aid for Young Scientists (No. 22K13948).}}

\begin{document}

\maketitle

\abstract{
    In this paper, we show mathematical justification of the data assimilation of nudging type in $L^p$-$L^q$ maximal regularity settings.
    We prove that the approximate solution of the primitive equations by data assimilation converges to the true solution with exponential order on the Besov space $B^{2/q}_{q,p}(\Omega)$ in the periodic layer domain $\Omega = \Torus^2 \times (-h, 0)$.
}


\section{Introduction} \label{intro}
The primitive equations are
\begin{equation} \label{eq_primitive}
    \begin{alignedat}{3}
        \partial_t v - \Delta v + u \cdot \nabla v + \nabla_H \pi
        & = f
        & \quad \text{in} \quad
        & \Omega \times (0, \infty), \\
        \partial_3 \pi
        & = 0
        & \quad \text{in} \quad
        & \Omega \times (0, \infty), \\
        \mathrm{div} \, u
        & =0
        & \quad \text{in} \quad
        & \Omega \times (0, \infty), \\
        v(0)
        & =v_0
        & \quad \text{in} \quad
        & \Omega,
    \end{alignedat}
\end{equation}
where $T>0$, $u$ is a unknown vector field with initial data $v_0$, $\pi$ are unknown scalar functions.
The domain $\Omega = \Torus^2 \times (-l, 0)$ is the periodic layer.
The differential operators $\nabla_H = (\partial_1, \partial_2)^T$, $\mathrm{div}_H = \nabla_H \cdot$, and $\Delta_H = \nabla_H \cdot \nabla_H$ are the horizontal gradient, the horizontal divergence, and the horizontal Laplacian, respectively.
The vertical velocity $w$ is given by
\begin{align*}
    w (x^\prime, x_3, t)
    = - \int_{-l}^0
        \mathrm{div}_H \, v (x^\prime, z, t)
    dz.
\end{align*}
We write $\Gamma_l$, $\Gamma_b$, and $\Gamma_u$ to denote the lateral, bottom, upper boundaries.
We impose the periodicity for $u, \pi$ on the lateral boundaries and
\begin{gather}\label{eq_bound_conditions}
    \begin{split}
        \partial_3 v = 0,
        \quad w = 0
        \quad \text{on}
        \quad \Gamma_u, \\
        v = 0,
        \quad w = 0
        \quad \text{on}
        \quad \Gamma_b.
    \end{split}
\end{gather}
The primitive equations are a fundamental model of geographic flows.
The global well-posedness of the primitive equations in $H^1$ was established by Cao and Titi \cite{CaoTiti2007}.
They proved this by combining the local well-posedness in $H^1$ with $H^1-$ a priori estimate.
The local well-posedness was proved by Guill\'{e}n-Gonz\'{a}lez, Masmoudi, and Rodr\'{i}guez-Bellido \cite{GuillenMasmoudiRodriguez2001}.
There are generalizations of Cao and Titi's results.
Hieber and Kashiwabara \cite{HieberKashiwabara2016} proved the global well-posedness in Lebesgue spaces $L^p$-settings based on the theory of analytic semigroup.
They used the equivalent equations to (\ref{eq_primitive}) such that
\begin{equation} \label{eq_primitive_evo}
    \begin{alignedat}{3}
        \partial_t v - P \Delta v + P (u \cdot \nabla v)
        & = P f
        & \quad \text{in} \quad
        & \Omega \times (0, \infty), \\
        \mathrm{div}_H \, \overline{v}
        & =0
        & \quad \text{in} \quad
        & \Omega \times (0, \infty), \\
        v(0)
        & =v_0
        & \quad \text{in} \quad
        & \Omega,
    \end{alignedat}
\end{equation}
where $P: L^q(\Omega)^2 \rightarrow L^q_{\overline{\sigma}}(\Omega)$ is the hydrostatic Helmholtz projection given by $P = I + \nabla_H (- \Delta_H)^{-1} \mathrm{div}_H$.
We write $C^\infty(\Omega)$ to denote the set of horizontally periodic $C^\infty$-functions for the horizontal variable $x^\prime$.
We write the Lebesgue space $L^q(\Omega)$, the Sobolev space $H^{s, q}(\Omega)$, and the Besov space $B^{s}_{q, p}(\Omega)$ to denote the completion of $C^\infty(\Omega)$ by the standard $L^q$-, $H^{s, q}$-, and $B^s_{q, p}$- norms for $s \geq 0$ and $1 < q < \infty$.
We denote the $\overline{\mathrm{div}_H}$-free $L^q$-vector fields $L^q_{\overline{\sigma}}(\Omega)$ by
\begin{align*}
    L^q_{\overline{\sigma}}(\Omega) 
    = \overline{
        \Set{
            \varphi \in C^\infty(\Omega)^2
        }{
            \mathrm{div}_H \overline{\varphi} = 0
        }
    }^{\Vert \cdot \Vert_{L^q(\Omega)^2}},
\end{align*}
where $\overline{\varphi} = \int_{-l}^0 \varphi(\cdot, \cdot, z) dz/l$ is the horizontal average.
We analogously denote $\overline{\mathrm{div}_H}$-free Sobolev spaces $H^{s, q}_{\overline{\sigma}}(\Omega)$ and Besov space $B^{s}_{q, p, \overline{\sigma}}(\Omega)$.
Note that $P: L^q(\Omega)^2 \rightarrow L^q_{\overline{\sigma}}(\Omega)$ is bounded.
Giga, Gries, Hieber, Hussein, and Kashiwabara \cite{GigaGriesHieberHusseinKashiwabara2017_analiticity} proved the global well-posedness in $L^p$-$L^q$ settings under various boundaries conditions, $i. e.$ the periodic, Neumann, Dirichlet, Dirichlet-Neumann mixed boundary conditions.

For the basic flow $v$, which is the solution to the primitive equations, we consider the data assimilation (DA) problem
\begin{equation} \label{eq_nudging}
    \begin{alignedat}{3}
        \partial_t \tilde{v} - P \Delta \tilde{v} + P \left(
            \tilde{u} \cdot \nabla \tilde{v}
        \right)
        & = P J_\delta f + \mu P (J_\delta v - J_\delta \tilde{v})
        & \quad \text{in} \quad
        & \Omega \times (0, \infty),\\
        \mathrm{div}_H \, \overline{v}
        & = 0,
        & \quad \text{in} \quad
        & \Omega \times (0, \infty),\\
        \tilde{v}(0)
        & = \tilde{v}_0
        & \quad \text{in} \quad
        & \Omega,
    \end{alignedat}
\end{equation}
where $\mu >0$ is a constant called an inflation parameter, $\delta >0$ is a constant representing the inverse of the observation density, and the bounded linear operator $J_\delta$ is a generalization of the low-path operator satisfying
\begin{gather} \label{eq_J}
    \begin{split}
        \Vert
            J_\delta f
        \Vert_{L^q (\Omega)^2}
        & \leq C
        \Vert
            f
        \Vert_{L^q (\Omega)^2}, \quad \text{for} \quad f \in L^q (\Omega)^2, \\
        \Vert
            J_\delta f - f
        \Vert_{L^q (\Omega)^2}
        & \leq C \delta
        \Vert
            \nabla f
        \Vert_{L^q (\Omega)^2} \quad \text{for} \quad f \in H^{1, q} (\Omega)^2,
    \end{split}
\end{gather}
for some constant $C>0$.
Note $J_\delta$ is independent of time variable.
This kind of DA procedures are called the nudging.
A typical example of $J_\delta$ is a cube-wise averaging operator, where each cube is a peace of homogeneous small cubic decomposition of $\Omega$ with the radius $O(\delta^{1/3})$. 
The term $J_\delta v - J_\delta \tilde{v}$ in (\ref{eq_nudging}) behaves like a forcing term to make $\tilde{v}$ converge to $v$.
A basic aim of DA is to predict the true state $v$ by using observation $J_\delta v$.
We remark that the information of $v$ itself is not used directly because we never obtain the perfect observation in the real world.
This can be seen from the right-hand side of (\ref{eq_nudging}).
In the right-hand side there are information of the observations and no direct information about $f$ and $v$.

The DA is typically used in meteorology to forecast many physical variables in the atmosphere and the ocean, such as velocity field, temperature field, and other physically meaningful quantities.
DA has strongly related to the partial differential equations of geographic flows.
Azouani, Olson, and Titi \cite{AzouaniOlsonTiti2014} gave a mathematical framework to consider the DA, and they showed the solution to the DA equations of the two-dimensional Navier-Stokes equation converges to the true solution in the energy space.
Albanez, Nussenzveig, and Titi \cite{AlbanezNussenzveigTiti2015} showed the same type of convergence result for the Navier-Stokes $\alpha$-model.
Pei \cite{Pei2019} showed the convergence of the DA equations to the primitive equations in the energy space.
There were no convergence results in $L^q$-frameworks.
The aim of this paper is to show that the solution to (\ref{eq_nudging}) converges to (\ref{eq_primitive}) with exponential order in $L^p$-$L^q$-based maximal regularity settings.

We set $V = v - \tilde{v}$, $W = w - \tilde{w}$, $U = (V, W)$, $V_0 = v_0 - \tilde{v}_0$, and $F = f - J_\delta f$.
To show the convergence of $\tilde{v}$ to the true solution $v$, we consider the equations of the difference
\begin{equation} \label{eq_diff}
    \begin{aligned}
        \partial_t V - P \Delta V + P \left(
            u \cdot \nabla V + U \cdot \nabla v - U \cdot \nabla V
        \right)
        & = P F - \mu P J_\delta V
        & \text{in} \quad
        & \Omega \times (0, \infty), \\
        \mathrm{div}_H \, \overline{V}
        & = 0
        & \text{in} \quad
        & \Omega \times (0, \infty),\\
        V(0)
        & = V_0
        & \text{in} \quad
        & \Omega.
    \end{aligned}
\end{equation}
For a Banach space $X$, we denote by $L^p_\eta (0, T; X)$ and $H^{1, p}_\eta (0, T; X)$ the $X$-valued $\eta$-weighted Lebesgue space and Sobolev space, respectively, such that
\begin{gather*}
    L^p_\eta (0, T; X)
    = \left \{
        f \in L^1_{loc} (0, T; X)
        \, : \,
        \Vert
            u
        \Vert_{L^p_\eta (0, T; X)}
        < \infty
    \right \}, \\
    H^{m,p}_\eta (0, T; X)
    = \left \{
        f, \, \partial_t f, \cdots, \partial_t^m f \in L^1_{loc} (0, T; X)
        \, : \,
        f, \, \partial_t f \cdots, \partial_t^m f \in L^p_\eta (0, T; X)
    \right \}, \\
    \Vert
        u
    \Vert_{L^p_\eta (0, T; X)}
    := \left(
        \int_0^T
            \left(
                t^{1 - \eta} \Vert
                    f(t)
                \Vert_{X}
            \right)^p
        \frac{dt}{t}
    \right)^{\frac{1}{p}}.
\end{gather*}
We use the standard modification when $p, q = \infty$.
We write $A_q = P (- \Delta_q)$ to denote the hydrostatic Stokes operator with the domain
\begin{align}
    D(A_q) = \Set{
        \varphi \in H^{2, q}(\Omega)^2 \cap L^q_{\overline{\sigma}}(\Omega)
    }{
        \text{$\varphi$ satisfies (\ref{eq_bound_conditions})}
    }.
\end{align}
The space of initial trace $X_{\theta, p, q} = (L^q_{\overline{\sigma}}(\Omega), D(A_q))_{\theta, p}$ is characterized by
\begin{align}
    X_{\theta, p, q}
    = \left \{
        \begin{array}{lll}
            \Set{
                v \in B^{2\theta}_{q, p}(\Omega)^2 \cap L^q_{\overline{\sigma}}(\Omega)
            }{
                \partial_3 v |_{x_3 = 0} = 0, \, v|_{x_3 = -l} = 0
            }
            & \text{if}
            & \frac{1}{2} + \frac{1}{2q} < \theta < 1,\\
            \Set{
                v \in B^{2\theta}_{q, p}(\Omega)^2 \cap L^q_{\overline{\sigma}}(\Omega)
            }{
                v|_{x_3 = -l} = 0
            }
            & \text{if}
            &  \frac{1}{2q} < \theta < \frac{1}{2} + \frac{1}{2q},\\
            B^{2\theta}_{q, p}(\Omega)^2 \cap L^q_{\overline{\sigma}}(\Omega)
            & \text{if}
            & \theta < \frac{1}{2q},
        \end{array}
    \right .
\end{align}
the reader refers to \cite{GigaGriesHieberHusseinKashiwabara2017_analiticity}.
The main result of this paper is
\begin{theorem} \label{thm_main_thoerem}
    Let $1 < p, q < \infty$ satisfy $\frac{1}{p} + \frac{1}{q} \leq 1$, and $\eta = \frac{1}{p} + \frac{1}{q}$.
    Let $v_0, V_0 \in X_{1/q, p, q}$.
    Let $f \in L^p_\eta (0, \infty; L^q(\Omega)^2)$ satisfy
    \begin{align*}
        \Vert
            e^{\gamma_0 t} f
        \Vert_{L^p_\eta (0, \infty; L^q(\Omega)^2)}
        + \Vert
            \partial_t f
        \Vert_{L^2_{loc}(0, \infty; L^2(\Omega)^2)}
        + \Vert
            e^{\gamma_0 t} f
        \Vert_{L^2(0, \infty; H^1(\Omega)^2)}
        < \infty,
    \end{align*}
    for some constant $\gamma_0 > 0$.
    Assume $v \in C(0, \infty; X_{1/q, p, q}) \hookrightarrow C(0, \infty; B^{2/q}_{q, p}(\Omega)^2)$ is the solution to (\ref{eq_primitive}) obtained by \cite{GigaGriesHieberHusseinKashiwabara2017_analiticity} satisfying
    \begin{gather} \label{eq_bounds_for_v}
        \begin{split}
            \Vert
                e^{\gamma_1 t} \partial_t v
            \Vert_{L^p_\eta (0, \infty; L^q(\Omega)^2)}
            + \Vert
                e^{\gamma_1 t} \nabla^2 v
            \Vert_{L^p_\eta (0, \infty; L^q(\Omega)^2)}
            < \infty, \\
            \Vert
                v (t)
            \Vert_{X_{1/q, p, q}}
            \leq C_0 e^{- \gamma_1 t},
        \end{split}
    \end{gather}
    for some constant $C_0>0$ and $\gamma_1 < \gamma_0$.
    Then there exit $\mu_0, \delta_0>0$, if $\mu \geq \mu_0$ and $\delta \leq \delta_0$, there exits a unique solution $V \in C(0, \infty;X_{1/q, p, q})$ to (\ref{eq_nudging}) such that
    \begin{align*}
        \Vert
            e^{\mu_\ast t} \partial_t V
        \Vert_{L^p_\eta(0, \infty; L^q(\Omega)^2)}
        + \Vert
            e^{\mu_\ast t} V
        \Vert_{L^p_\eta(0, \infty; H^{2,q}(\Omega)^2)}
        \leq C,
    \end{align*}
    for some constants $\gamma_0 < \mu_\ast < \gamma_1$ and $C>0$.
    Moreover, $V$ is exponentially stable in the sense
    \begin{align*}
        \Vert
            V (t)
        \Vert_{X_{1/q, p, q}}
        \leq C e^{ - \mu_\ast t}.
    \end{align*}
\end{theorem}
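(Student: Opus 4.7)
The proof strategy is to rewrite the difference equation so that the nudging becomes a spectral shift of the hydrostatic Stokes operator plus a small perturbation, and then to run a Banach fixed-point argument in a weighted $L^p$-$L^q$ maximal regularity space on $(0,\infty)$. Since $V\in L^q_{\overline{\sigma}}(\Omega)$ and $P^2=P$, we write
\begin{align*}
    \mu P J_\delta V = \mu V + \mu P(J_\delta V - V)
\end{align*}
and move $\mu V$ to the left-hand side of (\ref{eq_diff}). The modified linear problem reads $\partial_t V + (A_q + \mu) V = R(V)$, with
\begin{align*}
    R(V) = PF - \mu P(J_\delta V - V) - P(u\cdot\nabla V) - P(U\cdot\nabla v) + P(U\cdot\nabla V).
\end{align*}
The operator $A_q + \mu$ inherits the $L^p_\eta$-maximal regularity of $A_q$ with a spectral shift, so for any $0 < \mu_\ast < \mu$ the rescaled unknown $\widehat V(t)=e^{\mu_\ast t}V(t)$ satisfies a maximal regularity estimate on $(0,\infty)$; equivalently the norm
\begin{align*}
    \| V\|_{E_{\mu_\ast}} := \| e^{\mu_\ast t}\partial_t V\|_{L^p_\eta(0,\infty;L^q)} + \| e^{\mu_\ast t} V\|_{L^p_\eta(0,\infty;H^{2,q})}
\end{align*}
is bounded by $C(\| V_0\|_{X_{1/q,p,q}} + \| e^{\mu_\ast t} R(V)\|_{L^p_\eta(0,\infty;L^q)})$.

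The next step is to estimate each of the five pieces of $R(V)$ after multiplication by $e^{\mu_\ast t}$, for $\mu_\ast$ smaller than both $\gamma_0$ and $\gamma_1$. The forcing correction satisfies $\| PF\|_{L^q}\le C\delta\| \nabla f\|_{L^q}$ by (\ref{eq_J}), so the $e^{\mu_\ast t}$-weighted norm is finite by the hypothesis on $f$. The nudging defect is bounded by $C\mu\delta\| \nabla V\|_{L^q}$, and the interpolation inequality $\| \nabla V\|_{L^q}\le\varepsilon\| V\|_{H^{2,q}}+C_\varepsilon\| V\|_{L^q}$ allows us to absorb the top-order part into $\| V\|_{E_{\mu_\ast}}$ provided $\mu\delta\varepsilon$ is small, while the low-order part is absorbed using the spectral gap $\mu-\mu_\ast$. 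The term $P(u\cdot\nabla V)$ is controlled using the embedding $X_{1/q,p,q}\hookrightarrow L^\infty$ and the pointwise bound $\| v(t)\|_{X_{1/q,p,q}}\le C_0 e^{-\gamma_1 t}$ from (\ref{eq_bounds_for_v}); the $P(U\cdot\nabla v)$ term is estimated analogously, using $\| U\|_{L^\infty}\lesssim \| V\|_{X_{1/q,p,q}}$ (from $W=-\int_{-l}^{x_3}\mathrm{div}_H V\, dz$) and the weighted $L^p_\eta L^q$ bound on $\nabla^2 v$. Finally, the quadratic term $P(U\cdot\nabla V)$ is estimated by $\| V\|_{E_{\mu_\ast}}^2$ through the standard bilinear maximal regularity / H\"older argument, exploiting the scaling relation $\frac{1}{p}+\frac{1}{q}\le 1$.

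The parameters are calibrated in two stages. First we fix $\mu_0$ so large that all linear contributions depending on $v$ together with the interpolated low-order part of the nudging defect produce a coefficient strictly below one in the maximal regularity estimate; the strong positivity of $A_q+\mu_0$ makes this possible. Then we choose $\delta_0$ so small that $\mu_0\delta_0$ lies below the threshold set by the interpolation constants and $\| P\|$, which simultaneously renders $PF$ an arbitrarily small source. The resulting map $\Phi$, sending $V$ to the solution of the linearization with datum $R(V)$, maps a sufficiently small ball in $\{W\in E_{\mu_\ast}: W(0)=V_0\}$ into itself and is a strict contraction; the Banach fixed-point theorem yields the unique global solution $V\in C(0,\infty;X_{1/q,p,q})$ satisfying the stated weighted bound.

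The pointwise exponential decay $\| V(t)\|_{X_{1/q,p,q}}\le C e^{-\mu_\ast t}$ then follows from the continuous trace embedding of the weighted maximal regularity space into $BUC([0,\infty); X_{1/q,p,q})$, applied to the rescaled unknown $e^{\mu_\ast t}V$. The main obstacle is the simultaneous calibration of $\mu$ and $\delta$: the inflation parameter $\mu$ has to be large to dominate the coupling with the basic flow $v$, but it also multiplies the observation error through $\mu\| J_\delta V - V\|\le C\mu\delta\| \nabla V\|$, so that raising $\mu$ degrades this very term unless $\delta$ is correspondingly small. The resolution is to fix $\mu_0$ first (driven by the decay rate $\gamma_1$ of $v$ and by the embedding constants) and then let $\delta_0$ scale like a small multiple of $1/\mu_0$, so that $\mu_0\delta_0$ stays under the threshold at which interpolation lets us absorb the defect into the top-order maximal regularity norm.
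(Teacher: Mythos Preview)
Your approach has a genuine gap: the global fixed-point argument on $(0,\infty)$ cannot close for \emph{large} initial data $V_0$, which the theorem permits. The quadratic term $P(U\cdot\nabla V)$ contributes a bound of the form $C\Vert V\Vert_{E_{\mu_\ast}}^2$ to the right-hand side of your contraction estimate, and no choice of $\mu$ or $\delta$ makes this coefficient small. Since any $W$ with $W(0)=V_0$ satisfies $\Vert W\Vert_{E_{\mu_\ast}}\gtrsim\Vert V_0\Vert_{X_{1/q,p,q}}$, you cannot place the iteration in a small ball when $V_0$ is large; enlarging $\mu$ shifts the spectrum but does not shrink the maximal-regularity constant multiplying the quadratic term. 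Your argument would prove only a small-data version of the theorem. A secondary technical issue: the embedding $X_{1/q,p,q}\hookrightarrow L^\infty$ you invoke for $u\cdot\nabla V$ and $U\cdot\nabla v$ is false in three dimensions, since $B^{2/q}_{q,p}(\Omega)$ has smoothness $2/q<3/q$; the correct bilinear bounds require the anisotropic estimates of Proposition~\ref{prop_bilinear_estimate_sobolev} and the mixed-derivative machinery of Proposition~\ref{prop_bilinear_estimate_bilinear_maximal_regularity}.

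The paper circumvents the large-data obstacle by a two-stage argument. First, a local solution is built (Proposition~\ref{prop_local_wellposedness_maximal_regularity}), and then $L^2$-based energy a~priori estimates (Section~\ref{sec_a_priori_estimate}) are derived in which the dangerous quadratic term vanishes: $\int_\Omega (U\cdot\nabla V)\cdot V\,dx=0$ by the divergence-free structure. These energy estimates, together with $\mu$ large absorbing the \emph{linear} coupling $U\cdot\nabla v$, give global control of $\Vert V(t)\Vert_{H^2}$ and its eventual smallness. Only then does one invoke the small-data global result (Proposition~\ref{prop_local_wellposedness_maximal_regularity}(ii)) from a late time, using $H^2(\Omega)\hookrightarrow X_{1/q,p,q}$ to transfer smallness. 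This cancellation-by-integration-by-parts mechanism is unavailable in the pure $L^p$-$L^q$ maximal regularity framework and is the missing ingredient in your proposal.
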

\begin{remark}
    \begin{enumerate}
        \item The assumption $f \in H^1(0, T; L^2(\Omega)^2)$ is used to obtain time regularity, which ensure the solution $V(t)$ belongs $H^2(\Omega)^2$ for a. a. $t \in (0, T)$.
        \item The existence of the solution $v$ to (\ref{eq_primitive_evo}) satisfying (\ref{eq_bounds_for_v}) have been proved by Giga $et \, al$ \cite{GigaGriesHieberHusseinKashiwabara2017_analiticity}.
        One can also see that the existence of the solution satisfying (\ref{eq_bounds_for_v}) from the proof of Theorem \ref{thm_main_thoerem} by taking $v \equiv 0$ and $\mu, \delta = 0$.
        \item The theorem is an extension of Pei's $L^2$-convergence result \cite{Pei2019}.
        When $p = q = 2$, Theorem \ref{thm_main_thoerem} implies $\tilde{v} \rightarrow v$ in $H^1(\Omega)^2$ as $t \rightarrow \infty$, which is stronger convergence than Pei's result.
    \end{enumerate}
\end{remark}

There are several steps to prove Theorem \ref{thm_main_thoerem}.
We put $K_\delta = I - J_\delta$.
We first show that the perturbed hydrostatic operator $\tilde{A}_{\mu, q} = A_q + \mu P J_\delta = (A_q + \mu I) - \mu P K_\delta$ admits the bounded $H^\infty$-calculus.
We know that $K_\delta: H^{1, q}(\Omega)^2 \rightarrow L^q(\Omega)^2$ is bounded by (\ref{eq_J}) and the hydrostatic Stokes operator $A_q = P (- \Delta)$ with $D(A_q) = H^{2, q}(\Omega)^2 \cap L^q_{\overline{\sigma}}(\Omega)$ is invertible and admits bounded $H^\infty$-calculus by \cite{GigaGriesHieberHusseinKashiwabara2017_analiticity}.
Then we see that the perturbed hydrostatic operator $\tilde{A}_{\mu, q}$ admits the bounded $H^\infty$-calculus if we take $\delta > 0$ so small that $\delta = O(\mu^{-1})$ by perturbation arguments, see the book by Pr\"{u}ss and Simonett \cite{PrussSimonett2016} for the definition and properties of bounded $H^\infty$-calculus.
Therefore, we find that $\tilde{A}_{\mu, q}$ with $D(\tilde{A}_{\mu, q}) = D(A_q)$ has the $L^p$-$L^q$ maximal regularity and generates an analytic semigroup $e^{- t \tilde{A}_{\mu, q}}$.
Note that, if we take $\mu$ sufficiently large, $\tilde{A}_{\mu, q}$ is exponentially stable, and the decay rate is larger than the minimum spectrum of $A_q$.

We construct the local-in-time strong solution to (\ref{eq_diff}) based on Banach's fixed point theorem in $L^p$-$L^q$ maximal regularity settings.
We follow the methods by Giga $et \, al.$ \cite{GigaGriesHieberHusseinKashiwabara2017_analiticity}.
We also prove $V \in H^{2, p}_{\eta, loc}(0, T; L^q(\Omega)^2) \cap H^{1, p}_{\eta, loc}(0, T; D(\tilde{A}_{\mu, q})) \hookrightarrow C(0, T; D(\tilde{A}_{\mu, q}))$ by using the assumption $f \in H^{1,p}_\eta (0, T; L^q(\Omega)^2)$, which is used to connect the $L^p$-$L^q$-based solutions to $H^1$-global solution.

We establish $H^2$- a priori estimate for $V$.
The strategy is based on Giga $et \, al$ \cite{GigaGriesHieberHusseinKashiwabara2017_analiticity}.
They showed $v \in C(0, T; D(A_2))$ by proving $H^1(0, T; D(A_2))$ bound using integration by parts and time regularity of $f \in H^1(0, T; L^q(\Omega)^2)$.
In contrast to this, Hieber and Kashiwabara \cite{HieberKashiwabara2016}  obtained $v \in C(0, T; D(A_2))$ using the spacial regularity assumption for $f \in L^2(0, T; H^1(\Omega)^2)$.
In our case, Hieber and Kashiwabara's scenario does not work because the term $J_\mu V$ in (\ref{eq_diff}) does not always belong to $L^2(0, T; H^1(\Omega)^2)$ even if $V$ is smooth for spacial variables.
Due to this reason, we assume time regularity for the external force.
The assumption $f \in L^2(0, T; H^1(\Omega)^2)$ is used to bound the convection term $U \cdot \nabla v$ in (\ref{eq_diff}).
We need not to estimate $U \cdot \nabla V$ and $u \cdot \nabla V$ to establish $H^1$- a priori estimate because these terms vanish by integration by parts.
However, the term $U \cdot \nabla v$ does not vanish by integration by parts, then we have to estimate this.
We need spacial regularity of the basic flow $v \in L^2(0, T; H^3(\Omega)^2)$ to estimate the convection term.
This additional regularity can be obtained if $f \in L^2(0, T; H^1(\Omega)^2)$, see Appendix A.
Combining with the local well-posedness and $H^1$- a priori estimate, we establish the global well-posedness in the maximal regularity space.
For the case $T = \infty$, we first derive $H^2$ decay of $V$ when $p = q = 2$.
For the other case we use the embedding $H^2(\Omega)^2 \hookrightarrow B^{2/q}_{q, p}(\Omega)^2$ to get smallness of $V(T_0)$ for some $T_0 > 0$.
By the choice of the critical time weight $\eta$, we can extend $V(t)$ to infinity.
Although, in the previous studies \cites{AzouaniOlsonTiti2014,Pei2019}, the authors construct the solution to the nudging equations directly, we show the existence and the uniqueness of the solution to (\ref{eq_nudging}) by contradiction argument, global well-posedness of the primitive equations, and Theorem \ref{thm_main_thoerem}.
Actually, we can show that there exists a unique solution $\tilde{v}$ to (\ref{eq_nudging}) on $H^{1,q}(t_0 + \tau, t_0; L^q(\Omega)) \cap L^q(t_0 + \tau, t_0; H^{2, q}(\Omega))$ for all $t_0 >0$ and small $\tau > 0$ with initial data $\tilde{v} (t_0)$.
On the other hand, vector field $v - V$ solves (\ref{eq_nudging}) in $H_\eta^{1,q}(0, \infty; L^q(\Omega)) \cap L^q_\eta(0, \infty; H^{2, q}(\Omega))$.
By the uniqueness, the vector field $v - V$ must be the global solution to (\ref{eq_nudging}).

In this paper we use the following notation and notion.
We write $L^q_x L^r_{x^\prime} (\Omega \times \Omega^\prime) = L^q(\Omega; L^r(\Omega^\prime))$ for domains $\Omega, \Omega^\prime$ and $x \in \Omega, x^\prime \in \Omega^\prime$.
We write
\begin{align*}
    \dot{H}^{m, q}(\Omega)
    = \Set{
        \varphi \in L^1_{loc}(\Omega)
    }{
        \Vert \varphi \Vert_{\dot{H}^{m}}
        := \sum_{|\alpha|=m} \Vert \partial^\alpha_x \varphi \Vert_{L^q(\Omega)} < \infty
    }.
\end{align*}
for $m \in \Integer_{\geq 0}$ and $q \in (1, \infty)$.
For a bounded operator $B: X \rightarrow Y$ for a Banach spaces $X, Y$, we write $\Vert B \Vert_{X \rightarrow Y}$ to denote the operator norm of $B$.
We denote the sector $\Sigma_\theta = \Set{\lambda \in \Complex}{ |\arg \lambda| < \theta}$ and define the set of analytic function on $\Sigma_\theta$ decaying at the origin and infinity such that
\begin{align*}
    H^\infty_0(\Sigma_\theta)
    = \Set{
        \varphi\,:\, \text{analytic on $\Sigma_\theta$}
    }{
        |\varphi(\lambda)| \leq C \left| \frac{\lambda}{1 + \lambda^2} \right|^\varepsilon \, \text{for $\lambda \in \Sigma_{\theta}$ and some $\varepsilon>0$}
    }.
\end{align*}
The space $H^\infty_0(\Sigma_\theta)$ is equipped with the uniform norm on $\Sigma_\theta$.
We say a sectorial operator $S$ with the domain $D(S) \subset X$ and the spectral angle $\theta_S \in (0, \pi)$ admits a bounded $H^\infty$-calculus on $X$ if
\begin{align*}
    \left \Vert
        \frac{1}{2 \pi i} \int_{\partial \Sigma_\theta}
            \varphi(\lambda) (\lambda - S)^{-1}
        d\lambda
    \right \Vert_{X \rightarrow X}
    \leq C \Vert
        \varphi
    \Vert_{H^\infty_0(\Sigma_\theta)}
\end{align*}
for all $\varphi \in H^\infty_0(\Sigma_\theta)$ with $\theta < \theta_S$.
In particular, the bounded $H^\infty$-calculus of $S$ on $L^q(\Omega)$ with the $H^\infty$-angle $\theta_{S, H^\infty} < \pi/2$ implies the maximal regularity estimate
\begin{align*}
    \Vert
        \psi
    \Vert_{H^{1, p}(0, T; L^q(\Omega))}
    + \Vert
        \psi
    \Vert_{L^{p}(0, T; D(S))}
    \leq C 
        \left(
        \Vert
            u_0
        \Vert_{(L^q(\Omega), D(S))_{1-/p, p}}
        + \Vert
            g
        \Vert_{L^p(0, T; D(S))}
        \right),
\end{align*}
for the the solution to
\begin{align*}
    \partial_t u + S u
    & = g \in L^p(0, T; L^q(\Omega)),\\
    u(0)
    & = u_0 \in (L^q(\Omega), D(S))_{1-/p, p}.
\end{align*}
The maximal regularity holds on the time-weight Lebesgue space $L^q_\eta(0, T; L^q(\Omega))$.
We denote the trace space $D_S(\theta, p)$ for a sectorial operator $S$ admitting a bounded $H^\infty$-calculus with $H^\infty$ angle $\phi_{H^\infty} \in [0, \pi/2)$ by
\begin{align*}
    D_S(\theta, p)
    & = \Set{
        x \in L^q_{\overline{\sigma}}(\Omega)
    }{
        \Vert
            x
        \Vert_{D_S(\theta, p)}
        < \infty
    }, \\
    \Vert
        x
    \Vert_{D_S(\theta, p)}
    & = \Vert
        x
    \Vert_{L^q(\Omega)^2}
    + \left( \int_0^\infty
            \Vert
                t^{1 - \theta} S e^{- t S} x
            \Vert_{L^q(\Omega)^2}^p
        \frac{dt}{t}
    \right)^{\frac{1}{p}}.
\end{align*}
We denote the Lebesgue-Sobolev mixed spaces $\mathbb{E}_{0, \eta, p, q, T}$ and $\mathbb{E}_{1, \eta, p, q, T}$ by
\begin{align*}
    \mathbb{E}_{0, \eta, p, q, T}
    & = L^{p}_\eta(0, T; L^q(\Omega)^2), \\
    \mathbb{E}_{1, \eta, p, q, T}
    & = H^{1, p}_\eta(0, T; L^q(\Omega)^2) \cap L^{p}_\eta(0, T; D(A_q)).
\end{align*}
\section{Linear Theory and Well-posedness}
\subsection{Bounded $H^\infty$-calculus of the linearized operator}
The linearized equation of (\ref{eq_diff}) is
\begin{equation} \label{eq_perturbed_hydrostatic_stokes}
    \begin{aligned}
        \partial_t v - P \Delta v + \mu P J_\delta v
        & = f
        & \quad \text{in} \quad \Omega \times (0, \infty) \\
        \mathrm{div}_H \, \overline{v}
        & = 0
        & \quad \text{in} \quad \Omega \times (0, \infty)
    \end{aligned}
\end{equation}
with initial data $v (0) = v_0$.
We show the maximal regularity estimates for (\ref{eq_perturbed_hydrostatic_stokes}).
\begin{lemma} \label{lem_H_infty_for_tilde_A}
    Let $q \in (1, \infty)$, $\mu > 0$, and $\delta > 0$.
    Then there exists a constant $\alpha > 0$, if $\mu \delta < \alpha$, the perturbed hydrostatic operator operator $\tilde{A}_{\mu, q} := A_q + \mu J_\delta$ with $D(\tilde{A}_{\mu, q}) = D(A)$ admits a bounded $H^\infty$-calculus with $H^\infty$ angle $\theta_{H^\infty} \in (0, \pi/2)$.
    Furthermore, $\tilde{A}_{\mu, q}$ generates an analytic semigroup $e^{- t \tilde{A}_{\mu, q}}$ with faster decay rate than $e^{- t A}$ i.e.
    \begin{align*}
        \Vert
            e^{- t \tilde{A}_{\mu, q}}
        \Vert_{L^q (\Omega)^2 \rightarrow L^q (\Omega)^2}
        \leq C e^{ - \mu_\ast t}
    \end{align*}
    for some $C, \mu_\ast > 0$ and $\mu_\ast$ can be taken larger than the minimum spectrum of $A$.
\end{lemma}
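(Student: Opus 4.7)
The strategy is to view $\tilde{A}_{\mu,q}$ as a small perturbation of the shifted hydrostatic Stokes operator. Writing $K_\delta = I - J_\delta$ and using that $P$ is bounded on $L^q(\Omega)^2$, I decompose
\begin{align*}
    \tilde{A}_{\mu,q} = (A_q + \mu I) - \mu P K_\delta.
\end{align*}
Since $A_q$ admits a bounded $H^\infty$-calculus of angle less than $\pi/2$ by \cite{GigaGriesHieberHusseinKashiwabara2017_analiticity}, and shifting by a positive multiple of the identity preserves this property without enlarging the $H^\infty$-angle, the same holds for $A_q + \mu I$. The plan is then to show that $\mu P K_\delta$ is a relatively small perturbation of $A_q + \mu I$ so that the perturbation theorem for bounded $H^\infty$-calculus from Pr\"{u}ss-Simonett \cite{PrussSimonett2016} applies.

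For the perturbation estimate, I would combine (\ref{eq_J}) with the embedding $D(A_q^{1/2}) \hookrightarrow H^{1,q}(\Omega)^2$ (which holds because $A_q$ admits a bounded $H^\infty$-calculus) and the moment inequality for operators with bounded $H^\infty$-calculus to obtain
\begin{align*}
    \Vert \mu P K_\delta f \Vert_{L^q} \leq C \mu \delta \Vert f \Vert_{H^{1,q}} \leq C \mu \delta \Vert A_q f \Vert_{L^q}^{1/2} \Vert f \Vert_{L^q}^{1/2}.
\end{align*}
Exploiting the positivity of $A_q$ via $\mu \Vert f \Vert_{L^q} \leq \Vert (A_q + \mu I) f \Vert_{L^q}$ and $\Vert A_q f \Vert_{L^q} \leq 2 \Vert (A_q + \mu I) f \Vert_{L^q}$ for $f \in D(A_q)$, this yields a relative bound of order $\mu^{1/2} \delta$ with respect to $A_q + \mu I$. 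Under the hypothesis $\mu \delta < \alpha$ with $\alpha$ sufficiently small, this quantity falls below any prescribed threshold, so the perturbation theorem applies and gives bounded $H^\infty$-calculus for $\tilde{A}_{\mu,q}$ on $D(A_q)$, still with angle less than $\pi/2$; in particular $-\tilde{A}_{\mu,q}$ generates an analytic semigroup.

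For the decay estimate, the spectrum of $A_q + \mu I$ equals $\mu + \sigma(A_q)$ and hence lies strictly to the right of $\sigma(A_q)$ at a distance of at least $\mu$. The relatively bounded perturbation $-\mu P K_\delta$ shifts this spectrum by at most a quantity of order $\mu^{1/2}\delta$, which under $\mu\delta < \alpha$ (for $\alpha$ small) is far smaller than the gap $\mu$. Consequently $\sigma(\tilde{A}_{\mu,q})$ remains strictly to the right of $\sigma(A_q)$, producing an exponential decay rate $\mu_\ast$ that exceeds $\mu_{\min}(A_q)$, as claimed.

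The main obstacle is the calibration between $\mu$ and $\delta$: since the perturbation carries a factor $\mu$ which is not small by itself, only the combined smallness $\mu\delta \ll 1$ allows the argument to close. It is essential to introduce the shift $\mu I$ and trade $A_q$-norm terms for $(A_q + \mu I)$-norm terms via the moment inequality, since this is what produces the favorable $\mu^{1/2}\delta$ relative bound rather than a useless $\mu\delta$ or $\mu$ bound alone.
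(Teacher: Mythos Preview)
Your approach is essentially the same as the paper's: both use the decomposition $\tilde{A}_{\mu,q} = (A_q + \mu I) - \mu P K_\delta$, bound $\mu P K_\delta$ via (\ref{eq_J}) and the interpolation/moment inequality $\Vert \mu P K_\delta f\Vert_{L^q} \leq C\mu\delta\,\Vert A_q f\Vert_{L^q}^{1/2}\Vert f\Vert_{L^q}^{1/2}$, and then invoke a standard $H^\infty$-calculus perturbation theorem (the paper cites Kunstmann--Weis \cite{KunstmannWeis2004} for the lower-order version, you cite Pr\"uss--Simonett \cite{PrussSimonett2016} for the small-relative-bound version). One small caveat: your relative bound with respect to $A_q+\mu I$ is of order $\mu^{1/2}\delta$, which is not automatically small from $\mu\delta<\alpha$ alone when $\mu$ is small; the paper avoids this by treating $\mu P K_\delta$ as a lower-order perturbation of $A_q$ (bounded by $C\mu\delta\,\Vert A_q^{1/2+\varepsilon'}\cdot\Vert$), for which the condition $\mu\delta<\alpha$ is exactly right, and only afterwards uses the shift by $\mu$ to push the spectrum---so in the regime where the faster-decay claim is meaningful ($\mu$ large) your argument closes as written, but for the first assertion in full generality you should either switch to the lower-order perturbation formulation or note that $\mu^{1/2}\delta\leq\mu\delta$ once $\mu\geq 1$.
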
.
\begin{corollary}
    Let $\frac{1}{p} < \eta \leq 1$
    Under the same assumption of Theorem \ref{lem_H_infty_for_tilde_A},
    \begin{align}
        \Vert
            e^{\mu_\ast t} \partial_t v
        \Vert_{\mathbb{E}_{0, \eta, p, q, T}}
        + \Vert
            e^{\mu_\ast t} \tilde{A}_{\mu, q} v
        \Vert_{\mathbb{E}_{0, p, q, T}}
        \leq C \left(
            \Vert
                v_0
            \Vert_{X_{\eta - 1/p, p, q}}
            + \Vert
                e^{\mu_\ast t} f
            \Vert_{\mathbb{E}_{0, p, q, T}}
        \right),
    \end{align}
    for the solution $v$ to $\partial_t v + \tilde{A}_{\mu, q} v = f$ with initial data $v_0 \in X_{\eta - 1/p, p, q}$ and $f$ satisfying $e^{\mu_\ast t} f \in \mathbb{E}_{0, \eta, p, q, T}$.
    The constant $C>0$ is independent of $T$.
\end{corollary}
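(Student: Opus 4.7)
The plan is to reduce the exponentially weighted estimate to the standard $L^p$-$L^q$ maximal regularity of a suitably shifted operator. Set $w(t) := e^{\mu_\ast t} v(t)$ and $g(t) := e^{\mu_\ast t} f(t)$. A direct computation using $\partial_t w = \mu_\ast w + e^{\mu_\ast t} \partial_t v$ shows that $w$ solves
\begin{align*}
\partial_t w + (\tilde A_{\mu,q} - \mu_\ast I)\, w = g, \qquad w(0) = v_0,
\end{align*}
so the weighted bound for $v$ is equivalent to the unweighted bound for $w$ with $B := \tilde A_{\mu,q} - \mu_\ast I$ in place of $\tilde A_{\mu,q}$.

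Next, I invoke Lemma \ref{lem_H_infty_for_tilde_A}: the operator $\tilde A_{\mu,q}$ has bounded $H^\infty$-calculus with angle strictly less than $\pi/2$, and the semigroup estimate $\Vert e^{-t\tilde A_{\mu,q}}\Vert \leq C e^{-\mu_\ast t}$ places its spectrum in the half-plane $\{\mathrm{Re}\,\lambda \geq \mu_\ast\}$. Hence $B$ is invertible, sectorial, and inherits the bounded $H^\infty$-calculus with angle still strictly less than $\pi/2$; shifting a strictly positive sectorial operator by a real amount below its spectral bound is a standard perturbation preserving the calculus (see Pr\"uss--Simonett). Consequently $B$ admits $L^p$-$L^q$ maximal regularity on $(0,T)$ with constant independent of $T$, and this extends to the time-weighted space $L^p_\eta$ for $1/p < \eta \leq 1$ by the Pr\"uss--Simonett weighted maximal regularity framework, together with the trace characterization $(L^q_{\overline{\sigma}}, D(A_q))_{\eta - 1/p, p} = X_{\eta - 1/p, p, q}$.

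Applying this weighted maximal regularity to the equation for $w$ yields
\begin{align*}
\Vert \partial_t w \Vert_{\mathbb{E}_{0,\eta,p,q,T}} + \Vert B w \Vert_{\mathbb{E}_{0,\eta,p,q,T}} \leq C\bigl(\Vert v_0 \Vert_{X_{\eta - 1/p, p, q}} + \Vert g \Vert_{\mathbb{E}_{0,\eta,p,q,T}}\bigr),
\end{align*}
with $C$ independent of $T$. Translating back via $e^{\mu_\ast t}\partial_t v = \partial_t w - \mu_\ast w$ and $e^{\mu_\ast t}\tilde A_{\mu,q} v = B w + \mu_\ast w$, together with the pointwise bound $\Vert w(t)\Vert_{L^q} \leq \Vert B^{-1}\Vert_{L^q \to L^q}\,\Vert B w(t)\Vert_{L^q}$ coming from invertibility of $B$, the lower order term $\Vert w\Vert_{\mathbb{E}_{0,\eta,p,q,T}}$ is absorbed into the $\Vert B w \Vert$ contribution, producing the asserted estimate for $v$.

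The only delicate point is verifying that both the $T$-independence of the maximal regularity constant and the passage to the weighted $L^p_\eta$ scale genuinely survive the shift. Both reduce to the statement that $\tilde A_{\mu,q} - \mu_\ast I$ is a strictly positive sectorial operator with bounded $H^\infty$-calculus of angle below $\pi/2$, and this is exactly what the spectral bound in Lemma \ref{lem_H_infty_for_tilde_A} (namely $\mu_\ast$ strictly less than $\min\mathrm{Re}\,\sigma(\tilde A_{\mu,q})$) guarantees; the rest of the proof is a routine application of classical $L^p$-$L^q$ maximal regularity theory.
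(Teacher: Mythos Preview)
Your argument is correct and is precisely the standard route: substitute $w=e^{\mu_\ast t}v$, observe that $w$ solves the shifted problem with generator $\tilde A_{\mu,q}-\mu_\ast I$, and invoke maximal regularity for this shifted operator, whose bounded $H^\infty$-calculus with angle below $\pi/2$ follows from Lemma~\ref{lem_H_infty_for_tilde_A} and the spectral shift $\mathrm{Re}\,\sigma(\tilde A_{\mu,q})\ge \mu_\ast$. The paper itself provides no proof of this corollary, treating it as an immediate consequence of Lemma~\ref{lem_H_infty_for_tilde_A} and the general theory in \cite{PrussSimonett2016}; your write-up simply supplies the expected details, including the absorption of the lower-order term via invertibility of the shifted operator, which is what underlies the $T$-independence of the constant.

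One small caveat: the decay bound $\Vert e^{-t\tilde A_{\mu,q}}\Vert\le Ce^{-\mu_\ast t}$ in Lemma~\ref{lem_H_infty_for_tilde_A} only gives $\mathrm{Re}\,\sigma(\tilde A_{\mu,q})\ge\mu_\ast$, not strict inequality, so to guarantee that $\tilde A_{\mu,q}-\mu_\ast I$ is genuinely invertible you should note that $\mu_\ast$ can be chosen strictly below the spectral bound (the lemma says ``for some $\mu_\ast$'', and a slight decrease preserves all conclusions). This is harmless but worth stating explicitly.
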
.

\begin{proof}[Proof of Lemma \ref{lem_H_infty_for_tilde_A}]
    We first invoke that the hydrostatic Stokes operator $A_q$ generates an analytic semigroup $e^{- t A_q}$ in $L^q_{\overline{\sigma}}(\Omega)$ and admits the bounded inverse, see \cite{HieberHusseingKashiwabara2016}.
    There exits a solution $\psi$ to $ \lambda \psi - A_q \psi = Pf$ such that
    \begin{align} \label{eq_resolvent_ineq_hydrostatic_stokes}
        |\lambda| \Vert
            \psi
        \Vert_{L^q (\Omega)^2}
        + \Vert
            \psi
        \Vert_{\dot{H}^2 (\Omega)^{2}}
        \leq C \Vert
            Pf
        \Vert_{L^q (\Omega)^2}
        \leq C \Vert
            f
        \Vert_{L^q (\Omega)^2},
    \end{align}
    for all $\lambda \in \overline{\Sigma_\theta}^c \cup \{0\}$ and all $\theta \in (0, \pi/2)$.
    Since $J_\delta = I + K_\delta$, we have the resolvent problem for $\tilde{A}_{\mu, q}$
    \begin{align} \label{eq_resolvent_problem_A+muJ}
        (\lambda + \mu) v + A_qv + \mu P K_\delta v = Pf,
    \end{align}
    By the assumption (\ref{eq_J}), we see that $K_\delta$ is a relatively compact perturbation to $A_q$ with domain $D(K_\delta) = H^{1, q} (\Omega)^2$ satisfying
    \begin{align*}
        \Vert
            \mu P K_\delta \varphi
        \Vert_{L^q(\Omega)^2}
        & \leq C \mu \delta \Vert
            \nabla \varphi
        \Vert_{L^q(\Omega)^2}
        \leq C \mu \delta \Vert
            \Delta \varphi
        \Vert_{L^q(\Omega)^2}^{\frac{1}{2}}
        \Vert
            \varphi
        \Vert_{L^q(\Omega)^2}^{\frac{1}{2}} \\
        & \leq C \mu \delta \Vert
            A_q\varphi
        \Vert_{L^q(\Omega)^2}^{\frac{1}{2}}
        \Vert
            \varphi
        \Vert_{L^q(\Omega)^2}^{\frac{1}{2}} \\
        & \leq \varepsilon \Vert
            A_q\varphi
        \Vert_{L^q(\Omega)^2}
        + \frac{1}{4 \varepsilon} \Vert
            \varphi
        \Vert_{L^q(\Omega)^2}
    \end{align*}
    for all $\varphi \in D (A_q)$ and some small $\delta,  \varepsilon \in (0, 1)$.
    Therefore $A_q+ \mu P K_\delta$ with the domain $D(A_q+ \mu P K_\delta) = D(A_q)$ generates an analytic semigroup $e^{- t(A_q+ \mu P K_\delta)}$ on $L^q_{\overline{\sigma}} (\Omega)$.
    We see that $\lambda = 0$ belongs to the resolvent of $A_q$, the embedding $D(A_q) \hookrightarrow D(B)$, and the inequality $\Vert \mu P K_\delta v \Vert_{L^q(\Omega)^2} \leq \Vert (-A_q)^{\frac{1}{2}+\varepsilon^\prime} v \Vert_{L^q(\Omega)^2}$ for all $v \in D(A_q)$, small $\delta >0$, and $ \varepsilon^\prime \in [0, 1/2)$.
    Using perturbation theory of $H^\infty$-calculus, $e.g.$ Lemma 4.1 in the book \cite{KunstmannWeis2004}, we see there exits a constant $\lambda_0 > 0$ such that $\lambda_0 + A_q+ \mu P K_\delta$ admits a bounded $H^\infty$-calculus on $L^q_{\overline{\sigma}} (\Omega)$ with the $H^\infty$-angle angle $\theta^\prime \in (0, \pi/2)$.
    In view of (\ref{eq_resolvent_problem_A+muJ}), if we take $\mu$ sufficiently large, we find $\tilde{A}_{\mu, q}$ generates an analytic semigroup $e^{- t \tilde{A}_{\mu, q}}$ with faster exponential decay than $e^{- t A_q}$ and also admits an bounded $H^\infty$-calculus in $L^q_{\overline{\sigma}}(\Omega)$.
\end{proof}
Hereafter, we fix the constant $\mu_\ast$ in Lemma \ref{lem_H_infty_for_tilde_A} and always assume $\mu > \mu_0$ and $\delta < \delta_0$ for large $\mu_0>0$ and small $\delta_0 >0$ so that the assertions of Lemma \ref{lem_H_infty_for_tilde_A} holds.

By the same kind of perturbation arguments as the proof of Lemma \ref{lem_H_infty_for_tilde_A}, we can show
\begin{proposition} \label{prop_H_infty_for_A_plus_J}
    Under the same assumption for $q, \mu, \delta$ as Lemma \ref{lem_H_infty_for_tilde_A}.
    There exists a constant $\alpha > 0$, if $\mu \delta < \alpha$, the perturbed hydrostatic operator operator $\tilde{\tilde{A}}_{\mu, q} = A_q + \mu J_\delta$ with $D(\tilde{\tilde{A}}_{\mu, q}) = D(A)$ admits the bounded $H^\infty$-calculus.
    Furthermore, $\tilde{\tilde{A}}_{\mu, q}$ generates an analytic semigroup $e^{- t \tilde{\tilde{A}}_{\mu, q}}$ such that
    \begin{align*}
        \Vert
            e^{- t \tilde{\tilde{A}}_{\mu, q}}
        \Vert_{L^q (\Omega)^2 \rightarrow L^q (\Omega)^2}
        \leq C e^{ c t}
    \end{align*}
    for some $C, c > 0$.
\end{proposition}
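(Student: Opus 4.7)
The plan is to repeat the argument of Lemma \ref{lem_H_infty_for_tilde_A} almost verbatim but to forgo the step that uses $\mu$ large enough to absorb the minimum spectrum of $A_q$. As a consequence the resulting semigroup only satisfies an exponential growth bound $e^{ct}$ instead of the decay bound of the Lemma.

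First I would use $K_\delta = I - J_\delta$ to write
\[
\tilde{\tilde{A}}_{\mu, q} = A_q + \mu P J_\delta = (A_q + \mu I) - \mu P K_\delta
\]
on $L^q_{\overline{\sigma}}(\Omega)$, where $P$ acts as the identity on the $\overline{\mathrm{div}_H}$-free subspace. The shifted operator $A_q + \mu I$ inherits the bounded $H^\infty$-calculus of $A_q$ with the same $H^\infty$-angle in $(0, \pi/2)$ by \cite{GigaGriesHieberHusseinKashiwabara2017_analiticity}. The perturbation $\mu P K_\delta$ is relatively $A_q$-bounded with arbitrarily small relative bound via the same Gagliardo--Nirenberg-type interpolation used in Lemma \ref{lem_H_infty_for_tilde_A}: from (\ref{eq_J}),
\[
\Vert \mu P K_\delta \varphi\Vert_{L^q(\Omega)^2}
\leq C \mu\delta \Vert \nabla \varphi\Vert_{L^q(\Omega)^2}
\leq \varepsilon \Vert A_q \varphi\Vert_{L^q(\Omega)^2} + C_\varepsilon \Vert \varphi\Vert_{L^q(\Omega)^2},
\]
with $\varepsilon$ as small as required provided $\mu\delta < \alpha$ for a suitable $\alpha > 0$.

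Next I would apply the perturbation theorem for bounded $H^\infty$-calculus (Lemma 4.1 of \cite{KunstmannWeis2004}) to conclude that for some $\lambda_0 > 0$ the shifted operator $\lambda_0 + \tilde{\tilde{A}}_{\mu, q}$ admits a bounded $H^\infty$-calculus on $L^q_{\overline{\sigma}}(\Omega)$ with $H^\infty$-angle in $(0, \pi/2)$. Unshifting yields the stated bounded $H^\infty$-calculus for $\tilde{\tilde{A}}_{\mu, q}$ itself, and the standard generation theorem for sectorial operators with angle less than $\pi/2$ gives an analytic semigroup with
\[
\Vert e^{- t \tilde{\tilde{A}}_{\mu, q}} \Vert_{L^q(\Omega)^2 \to L^q(\Omega)^2} \leq C e^{c t}
\]
for $c = \lambda_0$.

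The only real technical point, which is identical to one in the proof of Lemma \ref{lem_H_infty_for_tilde_A}, is the bookkeeping in the perturbation lemma: one must verify that the relative bound $\varepsilon$ of $\mu P K_\delta$ with respect to $A_q + \mu I$ can be made small under $\mu\delta < \alpha$, and that the sector angles and invertibility hypotheses of the $H^\infty$-perturbation theorem are met. Since we do not need to track how large $\mu$ has to be in order to recover exponential stability, the argument is in fact strictly shorter than that of Lemma \ref{lem_H_infty_for_tilde_A}.
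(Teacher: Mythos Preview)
Your proposal is correct and matches the paper's own approach: the paper does not spell out a proof but simply asserts that Proposition \ref{prop_H_infty_for_A_plus_J} follows ``by the same kind of perturbation arguments as the proof of Lemma \ref{lem_H_infty_for_tilde_A}''. Your observation that one drops the final step of taking $\mu$ large (which was only needed to push the spectrum past that of $A_q$ and obtain decay), and thereby ends up with a mere growth bound $Ce^{ct}$ with $c=\lambda_0$, is exactly the intended difference.
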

We define the exponentially-weighted maximal regularity space $\mathbb{E}_{1, \eta, p, q, T, e^{\gamma t}}$ equipped with the norm
\begin{align*}
    \Vert
        v
    \Vert_{\mathbb{E}_{1, \eta, p, q, T, e^{\gamma t}}}
    := \Vert
        e^{\gamma t} \partial_t v
    \Vert_{L^p_\eta(0, T ; L^q(\Omega)^2)}
    + \Vert
        e^{\gamma t} v
    \Vert_{L^p_\eta(0, T ; D(A_{\mu, q}))}.
\end{align*}
We find that
\begin{align*}
    \Vert
        \partial_t \left(e^{\gamma t} v\right)
    \Vert_{L^p_\eta(0, T ; L^q(\Omega)^2)}
    & \leq \gamma \Vert
        e^{\gamma t} v
    \Vert_{L^p_\eta(0, T ; L^q(\Omega)^2)}
    + \Vert
        e^{\gamma t} \partial_t v
    \Vert_{L^p_\eta(0, T ; L^q(\Omega)^2)} \\
    & \leq C \Vert
        v
    \Vert_{\mathbb{E}_{1, \eta, p, q, T, e^{\gamma t}}},
\end{align*}
where the constant $C > 0$ is independent of $T$.

\begin{proposition} \label{prop_pointwise_estimate_semigroup}
    Let $q \in (1, \infty)$ and $\theta \in [0, 1]$.
    Then 
    \begin{align}
        \Vert
            \tilde{A}_{\mu, q}^\theta e^{- t \tilde{A}_{\mu, q}} f
        \Vert_{L^q(\Omega)^2}
        \leq C t^{ - \theta} e^{- \mu_\ast t} \Vert
            f
        \Vert_{L^q(\Omega)^2}
    \end{align}
    for all $f \in L^q_{\overline{\sigma}}(\Omega)^2$.
\end{proposition}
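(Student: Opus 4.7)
The plan is to establish the estimate at the two endpoints $\theta = 0$ and $\theta = 1$ and then pass to the intermediate range by a moment inequality. The case $\theta = 0$ is simply the exponential decay
\begin{equation*}
\|e^{-t\tilde{A}_{\mu,q}} f\|_{L^q(\Omega)^2} \leq C e^{-\mu_\ast t}\|f\|_{L^q(\Omega)^2}
\end{equation*}
already established in Lemma \ref{lem_H_infty_for_tilde_A}.

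For $\theta = 1$, I would exploit the fact that, by the construction in the proof of Lemma \ref{lem_H_infty_for_tilde_A}, the spectrum of $\tilde{A}_{\mu,q}$ lies inside a shifted sector $\mu_\ast + \Sigma_\phi$ with $\phi < \pi/2$. Setting $B := \tilde{A}_{\mu,q} - \mu_\ast I$, the operator $B$ is still sectorial of angle less than $\pi/2$ with spectrum contained in the open right half-plane. The Dunford representation
\begin{equation*}
B e^{-tB} = \frac{1}{2\pi i} \int_{\partial \Sigma_{\phi'}} \lambda e^{-t\lambda} (\lambda - B)^{-1} d\lambda,
\end{equation*}
for a suitable $\phi' \in (\phi, \pi/2)$, combined with the scaling $\lambda \mapsto \lambda/t$, yields the standard analyticity bound $\|B e^{-tB}\|_{L^q \to L^q} \leq C t^{-1}$ for all $t > 0$. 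Using the identity
\begin{equation*}
\tilde{A}_{\mu,q} e^{-t\tilde{A}_{\mu,q}} = e^{-\mu_\ast t}\bigl(B e^{-tB} + \mu_\ast e^{-tB}\bigr)
\end{equation*}
together with the exponential decay of $e^{-tB}$ and the boundedness of $t \mapsto t e^{-\varepsilon t}$ on $(0,\infty)$ for $\varepsilon > 0$, the lower-order term is absorbed into $C t^{-1} e^{-\mu_\ast t}$, giving $\|\tilde{A}_{\mu,q} e^{-t\tilde{A}_{\mu,q}}\|_{L^q \to L^q} \leq C t^{-1} e^{-\mu_\ast t}$.

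For the remaining range $\theta \in (0, 1)$ I would invoke the moment inequality for fractional powers,
\begin{equation*}
\|\tilde{A}_{\mu,q}^\theta x\|_{L^q(\Omega)^2} \leq C \|\tilde{A}_{\mu,q} x\|_{L^q(\Omega)^2}^{\theta} \|x\|_{L^q(\Omega)^2}^{1-\theta}, \qquad x \in D(\tilde{A}_{\mu,q}),
\end{equation*}
which holds because $\tilde{A}_{\mu,q}$ admits a bounded $H^\infty$-calculus (hence bounded imaginary powers) on $L^q_{\overline{\sigma}}(\Omega)$. Applying it to $x = e^{-t\tilde{A}_{\mu,q}} f$ and inserting the two endpoint bounds yields the claim. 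I expect the main obstacle to be the $\theta = 1$ endpoint, where one must simultaneously preserve the full decay rate $\mu_\ast$ and retain the short-time singularity $t^{-1}$; the shift-and-unshift trick through $B = \tilde{A}_{\mu,q} - \mu_\ast I$ is what reconciles these two requirements, exploiting the strict separation between the spectrum of $\tilde{A}_{\mu,q}$ and the line $\{\mathrm{Re}\,\lambda = \mu_\ast\}$ provided by Lemma \ref{lem_H_infty_for_tilde_A}.
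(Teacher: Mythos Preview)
Your proposal is correct and follows essentially the same strategy as the paper: establish the endpoint bounds for $\theta=0$ and $\theta=1$, then interpolate. The paper's proof is a single sentence (``Since $\tilde{A}_{\mu,q}$ is sectorial, we see $\Vert\tilde{A}_{\mu,q}^m e^{-t\tilde{A}_{\mu,q}}f\Vert\leq Ct^{-m}e^{-\mu_\ast t}\Vert f\Vert$ for $m=0,1$; interpolating between them, we obtain the estimate''), whereas you have carefully spelled out the $\theta=1$ endpoint via the shift $B=\tilde{A}_{\mu,q}-\mu_\ast$ and justified the interpolation step through the moment inequality; these details are exactly what the paper is tacitly invoking.
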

\begin{proof}
    Since $\tilde{A}_{\mu, q}$ is sectorial, we see $\Vert \tilde{A}_{\mu, q}^{m} e^{- t \tilde{A}_{\mu, q}} f \Vert_{L^q(\Omega)^2} \leq t^{ - m} e^{- \mu_\ast t} C \Vert f \Vert_{L^q(\Omega)^2}$ for $m = 0,1$.
    Interpolating between them, we obtain the estimate.
\end{proof}

\begin{proposition} \label{prop_H1_t_bound_linear_case}
    Let $1 < p, q < \infty$, $\eta > 1/p$, and $T>0$.
    Let $V \in \mathbb{E}_{1, \eta, p, q, T}$ be the solution to
    \begin{equation} \label{eq_linear_perturbed_hydrostatic_Stokes_eq_with_zero_id}
        \begin{split}
            \begin{aligned}
                \partial_t V + \tilde{A}_{\mu, q} V
                & = P F \\
                V(0)
                & = 0
            \end{aligned}
        \end{split}
    \end{equation}
    for $F \in H^{1,p}_\eta(0, T; L^q(\Omega)^2)$ satisfying $t \partial_t F \in \mathbb{E}_{0, \eta, p, q, T}$ and $V_0 \in X_{1/q, p, q}$.
    Then there exists a $T$-independent constant $C > 0$ such that
    \begin{align*}
        & \Vert
            t \partial_t V
        \Vert_{H^{1, p}_\eta(0, T; L^q(\Omega)^2)}
        + \Vert
            t \partial_t V
        \Vert_{L^p_\eta(0, T; D(\tilde{A}_{\mu, q}))} \\
        & \leq C \left(
            \Vert
                V
            \Vert_{\mathbb{E}_{1, \eta, p, q, T}}
            + \Vert
                F
            \Vert_{\mathbb{E}_{0, \eta, p, q, T}}
            + \Vert
                t \partial_t F
            \Vert_{\mathbb{E}_{0, \eta, p, q, T}}
        \right).
    \end{align*}
\end{proposition}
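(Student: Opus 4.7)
The plan is to derive an evolution equation for $W := t \partial_t V$ and then apply the $L^p$-$L^q$ maximal regularity of $\tilde{A}_{\mu,q}$ established in Lemma \ref{lem_H_infty_for_tilde_A} and its corollary. Differentiating (\ref{eq_linear_perturbed_hydrostatic_Stokes_eq_with_zero_id}) in $t$ gives $\partial_t(\partial_t V) + \tilde{A}_{\mu,q}(\partial_t V) = P \partial_t F$; multiplying by $t$ and using $t \partial_t(\partial_t V) = \partial_t W - \partial_t V$ together with $\tilde{A}_{\mu,q}(t \partial_t V) = t \tilde{A}_{\mu,q} \partial_t V$ yields the equation
\begin{equation*}
    \partial_t W + \tilde{A}_{\mu,q} W = \partial_t V + t P \partial_t F,
    \qquad W(0) = 0,
\end{equation*}
where the vanishing initial condition is formally a consequence of the prefactor $t$ together with $V(0) = 0$.

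Given the equation for $W$, I would apply the maximal regularity estimate of the corollary to Lemma \ref{lem_H_infty_for_tilde_A} in the time-weighted setting, yielding
\begin{equation*}
    \Vert W \Vert_{\mathbb{E}_{1,\eta,p,q,T}}
    \leq C \Bracket{
        \Vert \partial_t V \Vert_{\mathbb{E}_{0,\eta,p,q,T}}
        + \Vert t \partial_t F \Vert_{\mathbb{E}_{0,\eta,p,q,T}}
    },
\end{equation*}
with a $T$-independent constant, using the boundedness of $P$ on $L^q(\Omega)^2$. The first term on the right is a part of $\Vert V \Vert_{\mathbb{E}_{1,\eta,p,q,T}}$, and the second is the assumed quantity; adding $\Vert F \Vert_{\mathbb{E}_{0,\eta,p,q,T}}$ to the right-hand side for free gives exactly the claimed bound. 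Since $D(\tilde{A}_{\mu,q}) = D(A_q)$, the left-hand side of the proposition is indeed $\Vert W \Vert_{\mathbb{E}_{1,\eta,p,q,T}}$.

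For rigor, I would approximate $F$ in the norms $H^{1,p}_\eta(0,T;L^q)$ and $\Vert t \partial_t (\cdot) \Vert_{\mathbb{E}_{0,\eta,p,q,T}}$ by smooth functions $F_n$. The associated solutions $V_n$ are then smooth in time for $t > 0$, so that the formal differentiation above is pointwise valid for $W_n := t \partial_t V_n$; the $T$-uniform estimate above lets me pass to the limit. The main technical obstacle is verifying that $W$ belongs to $\mathbb{E}_{1,\eta,p,q,T}$ with vanishing trace at $t = 0$ in the space $X_{\eta - 1/p, p, q}$. Here the hypothesis $\eta > 1/p$ makes the trace space well-defined, and the extra factor $t$ shifts the weight near $t=0$ so that $W$ enjoys stronger integrability there than $\partial_t V$ itself, forcing the trace to vanish. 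Once this is settled through the approximation, the bounded $H^\infty$-calculus of $\tilde{A}_{\mu,q}$ closes the estimate.
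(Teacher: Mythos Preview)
Your approach is essentially the same as the paper's: derive the evolution equation for $t\partial_t V$ and apply maximal regularity of $\tilde{A}_{\mu,q}$. The paper writes the right-hand side as $-\tilde{A}_{\mu,q}V + PF + tP\partial_t F$ (which equals your $\partial_t V + tP\partial_t F$ by the original equation), and instead of an approximation argument it handles the trace/regularity issue by first constructing the solution $\Phi \in \mathbb{E}_{1,\eta,p,q,T}$ to this equation via maximal regularity and then invoking uniqueness to identify $\Phi = t\partial_t V$.
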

\begin{proof}
    Since $F \in H^{1, p}_\eta(0, T; L^q(\Omega)^2)$, there exists a unique solution $V$ to (\ref{eq_linear_perturbed_hydrostatic_Stokes_eq_with_zero_id}) such that
    \begin{align*}
        \Vert
            V
        \Vert_{H^{2, p}_\eta(0, T; L^q(\Omega)^2) \cap H^{1, p}_\eta(0, T; L^q(\Omega)^2)}
        \leq C \Vert
            F
        \Vert_{H^{1, p}_\eta(0, T; L^q(\Omega)^2)}
    \end{align*}
    for some constant $C>0$.
    On the other hand, since $V \in \mathbb{E}_{1, \eta, p, q, T}$ and $t \partial_t F \in \mathbb{E}_{0, \eta, p, q, T}$, there exists a unique solution $\Phi \in \mathbb{E}_{1, \eta, p, q, T}$ to
    \begin{align} \label{eq_tdtV}
        \begin{split}
            \partial_t \Phi + \tilde{A}_{\mu, q} \Phi
            & = - \tilde{A}_{\mu, q} V
            + PF
            + t \partial_t P F, \\
            V(0)
            & = 0,
        \end{split}
    \end{align}
    such that
    \begin{align} \label{eq_estimate_Phi}
        \Vert
            \Phi
        \Vert_{\mathbb{E}_{1, \eta, p, q, T}}
        \leq C \left(
            \Vert
                V
            \Vert_{\mathbb{E}_{1, \eta, p, q, T}}
            + \Vert
                F
            \Vert_{\mathbb{E}_{1, \eta, p, q, T}}
            + \Vert
                t \partial_t F
            \Vert_{\mathbb{E}_{1, \eta, p, q, T}}
            \right),
    \end{align}
    where $C$ is independent of $T$.
    The vector field $t \partial_t V$ also satisfies (\ref{eq_tdtV}).
    By the uniqueness, we find $\Phi = t \partial_t V$.
    Therefore, $t \partial_t V$ satisfies (\ref{eq_estimate_Phi}).
\end{proof}

\subsection{Local well-posedness}
We establish the local well-posedness and global well-posedness for small data to (\ref{eq_diff}) in the maximal regularity settings.
We begin with the nonlinear estimates, see Giga et al. \cite{GigaGriesHieberHusseinKashiwabara2017_analiticity}.
\begin{proposition} \label{prop_bilinear_estimate_sobolev}
    There exits a constant $C>0$ such that
    \begin{align*}
        & \Vert
            v_1 \cdot \nabla_H v_2
        \Vert_{H^{s, q}(\Omega)^2}
        + \left \Vert
            \int_{-l}^{x_3}
                \mathrm{div}_H v_1
            dz \,
            \partial_3 v_2
        \right \Vert_{{H^{s, q}(\Omega)^2}} \\
        & \leq C \Vert
            v_1
        \Vert_{H^{s + 1 + \frac{1}{q}, q}(\Omega)^2}
        \Vert
            v_2
        \Vert_{H^{s + 1 + \frac{1}{q}, q}(\Omega)^2}
    \end{align*}
    for all $v_1, v_2 \in H^{s + 1 + \frac{1}{q}, q}(\Omega)^2$.
\end{proposition}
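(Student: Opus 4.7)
The plan is to bound each of the two nonlinear terms by anisotropic H\"{o}lder splitting combined with Sobolev embeddings that exploit the layer structure $\Omega = \Torus^2 \times (-l, 0)$. I first treat the case $s = 0$, then extend to general $s \geq 0$ via a fractional Leibniz argument. The role of the extra $1/q$ derivative in the hypothesis is precisely the critical trace-type regularity gap needed to reach an $L^\infty$ endpoint in the vertical variable.

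For the first term at $s = 0$, I apply H\"{o}lder's inequality in the anisotropic mixed norm to obtain
\begin{align*}
\|v_1 \cdot \nabla_H v_2\|_{L^q(\Omega)}
\leq \|v_1\|_{L^\infty_{x_3}(L^{2q}_{x'})} \|\nabla_H v_2\|_{L^q_{x_3}(L^{2q}_{x'})}.
\end{align*}
The first factor is controlled by composing the vertical trace embedding $H^{1+1/q, q}(\Omega) \hookrightarrow C([-l, 0]; H^{1, q}(\Torus^2))$ with the 2D Sobolev embedding $H^{1, q}(\Torus^2) \hookrightarrow L^{2q}(\Torus^2)$, which is valid for every $q \in (1, \infty)$ since its scaling condition reduces to $1 \geq 1/q$. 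For the second factor, the anisotropic characterization $H^{1+1/q, q}(\Omega) \hookrightarrow L^q_{x_3}(H^{1+1/q, q}(\Torus^2))$ yields $\nabla_H v_2 \in L^q_{x_3}(H^{1/q, q}(\Torus^2))$, and a further 2D Sobolev step gives membership in $L^q_{x_3}(L^{2q}_{x'})$.

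For the second term at $s = 0$, I use the identity $\int_{-l}^{x_3} \mathrm{div}_H v_1 \, dz = -w_1$, where $w_1$ is the vertical velocity associated with $v_1$. The pointwise estimate $|w_1(x', x_3)| \leq g(x') := \int_{-l}^0 |\nabla_H v_1(x', z)| \, dz$ is independent of $x_3$, so Fubini followed by H\"{o}lder in the horizontal variable yields
\begin{align*}
\|w_1 \, \partial_3 v_2\|_{L^q(\Omega)}
\leq \|g\|_{L^{2q}(\Torus^2)} \|\partial_3 v_2\|_{L^{2q}_{x'}(L^q_{x_3})}.
\end{align*}
Minkowski's inequality together with the same anisotropic Sobolev argument bounds both factors by $\|v_1\|_{H^{1+1/q, q}(\Omega)}$ and $\|v_2\|_{H^{1+1/q, q}(\Omega)}$, respectively.

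For general $s > 0$, I invoke the Kato--Ponce fractional Leibniz rule in $H^{s, q}(\Omega)$ to distribute the $s$ derivatives between the two factors, and close with the anisotropic Sobolev embeddings above at the appropriate regularity level. Since horizontal derivatives commute with the vertical integration operator, fractional horizontal regularity transfers cleanly through $\int_{-l}^{x_3} \cdot \, dz$, and the vertical-derivative cost in the second term is absorbed by the accompanying factor $\partial_3 v_2$. The main obstacle is the bookkeeping of anisotropic mixed-norm embeddings uniformly in $q \in (1, \infty)$ and ensuring the fractional Leibniz rule interacts correctly with the vertical integral on the layer domain; the extra $1/q$ vertical derivative in the hypothesis is precisely what prevents any endpoint loss near $q = 2$.
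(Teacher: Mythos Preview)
The paper does not supply its own proof of this proposition; it simply refers to Giga, Gries, Hieber, Hussein, and Kashiwabara \cite{GigaGriesHieberHusseinKashiwabara2017_analiticity}. Your proposal is, in essence, a reconstruction of the argument found there: anisotropic H\"older splitting on the layer $\Omega = \Torus^2 \times (-l,0)$, the vertical trace embedding $H^{1+1/q,q}(\Omega)\hookrightarrow C_{x_3}(B^{1}_{q,q}(\Torus^2))$, and the horizontal Sobolev step $H^{1/q,q}(\Torus^2)\hookrightarrow L^{2q}(\Torus^2)$. For $s=0$ your chain of embeddings is correct; the only point to make explicit is the Minkowski swap $\Vert\,\cdot\,\Vert_{L^{2q}_{x'}L^{q}_{x_3}}\leq \Vert\,\cdot\,\Vert_{L^{q}_{x_3}L^{2q}_{x'}}$ (valid since $2q\geq q$), which you use twice in the second term.

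For $s>0$ your Kato--Ponce sketch is the standard route, but one step deserves more than a sentence: the vertical integral operator in the second term is not a pointwise multiplier, so the fractional Leibniz rule does not apply to it directly. What actually happens is that one first passes isotropic $H^{s,q}$ regularity to the anisotropic scale $L^q_{x_3}H^{s,q}_{x'}\cap H^{s,q}_{x_3}L^q_{x'}$, then treats the horizontal and vertical derivatives separately; the vertical derivative of $\int_{-l}^{x_3}\mathrm{div}_H v_1\,dz$ is just $\mathrm{div}_H v_1$, which puts you back in the $s=0$ framework at one order lower, and the horizontal derivatives commute with the integral as you note. This is routine but should be stated, since your phrase ``fractional horizontal regularity transfers cleanly'' hides the anisotropic decomposition that makes it work.
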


\begin{proposition} \label{prop_bilinear_estimate_bilinear_maximal_regularity}
    Let $1 < p, q < \infty$ satisfying $\eta = \frac{1}{p} + \frac{1}{q} \leq 1$.
    Let $0 \leq \gamma \leq \mu_\ast$.

    (i) There exits a $T$-independent constant $C>0$ such that
    \begin{align} \label{eq_bilinear_estimate_bilinear_maximal_regularity_1}
        \begin{split}
            & \Vert
                \left(
                    f \cdot \nabla_H g
                \right)
            \Vert_{\mathbb{E}_{0, \eta, p, q, T, e^{\gamma t}}}
            + \left \Vert
                \left(
                    \int_{-l}^{x_3}
                        \mathrm{div}_H f
                    dz \,
                    \partial_3 g
                \right)
            \right \Vert_{\mathbb{E}_{0, \eta, p, q, T, e^{\gamma t}}} \\
            & \leq C \Vert
                f
            \Vert_{\mathbb{E}_{1, \eta, p, q, T, e^{\gamma t}}}
            \Vert
                g
            \Vert_{\mathbb{E}_{1, \eta, p, q, T, e^{\gamma t}}}
        \end{split}
    \end{align}
    for $f, g \in \mathbb{E}_{1, \eta, p, q, T, e^{\gamma t}}$ satisfying $f|_{t = 0} = 0, g|_{t = 0} = 0$.
    
    (ii) Assume $\tilde{f} = e^{ - \mathcal{A}_1 t} f_0 + f$ and $\tilde{g} = e^{ - \mathcal{A}_2 t} g_0 + g$ for $f_0, g_0 \in X_{1/q, p, q}$ and $\mathcal{A}_j = A_q, \tilde{A}_{\mu, q}$.
    For $\gamma \leq \mu_\ast$, if not $\mathcal{A}_1 = \mathcal{A}_2 = A_q$, it follows that
    \begin{align} \label{eq_bilinear_estimate_bilinear_maximal_regularity_2}
        \begin{split}
            & \Vert
                \tilde{f} \cdot \nabla_H \tilde{g}
            \Vert_{\mathbb{E}_{0, \eta, p, q, T, e^{\gamma t}}}
            + \Vert
                \int_{-l}^{x_3}
                    \mathrm{div}_H \tilde{f}
                dz \,
                \partial_3 \tilde{g}
            \Vert_{\mathbb{E}_{0, \eta, p, q, T, e^{\gamma t}}} \\
            & \leq C_1 \Vert
                f_0
            \Vert_{X_{1/q, p, q}}
            \Vert
                g_0
            \Vert_{X_{1/q, p, q}}
            + C_2 \Vert
                f_0
            \Vert_{X_{1/q, p, q}}
            \Vert
                g
            \Vert_{\mathbb{E}_{1, \eta, p, q, T, e^{\gamma t}}} \\
            & + C_3 \Vert
                g_0
            \Vert_{X_{1/q, p, q}}
            \Vert
                f
            \Vert_{\mathbb{E}_{1, \eta, p, q, T, e^{\gamma t}}}
            + C_4 \Vert
                f
            \Vert_{\mathbb{E}_{1, \eta, p, q, T, e^{\gamma t}}}
            \Vert
                g
            \Vert_{\mathbb{E}_{1, \eta, p, q, T, e^{\gamma t}}}.
        \end{split}
    \end{align}
\end{proposition}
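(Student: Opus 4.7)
The plan is to derive both estimates from the pointwise spatial bilinear bound of Proposition \ref{prop_bilinear_estimate_sobolev} combined with temporal H\"older and interpolation, exploiting the critical weight $\eta = 1/p + 1/q$ to make all exponents close. The exponential factor $e^{\gamma t}$ will be absorbed by the elementary identity $e^{\gamma t} f \cdot \nabla_H g = e^{-\gamma t}(e^{\gamma t} f) \cdot \nabla_H(e^{\gamma t} g)$ together with $|e^{-\gamma t}| \leq 1$ for $\gamma \geq 0$, so that each weighted bilinear estimate reduces to its unweighted counterpart applied to $e^{\gamma t} f$ and $e^{\gamma t} g$ (whose $\mathbb{E}_1$-norms are equivalent to the weighted-space norms up to constants depending only on $\gamma$).

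For part (i), taking $s = 0$ in Proposition \ref{prop_bilinear_estimate_sobolev} yields the pointwise bound $\|f \cdot \nabla_H g\|_{L^q(\Omega)^2} \leq C \|f\|_{H^{1+1/q, q}} \|g\|_{H^{1+1/q, q}}$ (and analogously for the vertical term). I would then apply the interpolation $\|u\|_{H^{1+1/q, q}} \leq C \|u\|_{L^q}^{1/2 - 1/(2q)} \|u\|_{H^{2, q}}^{1/2 + 1/(2q)}$ and use the trace embedding $\mathbb{E}_{1, \eta, p, q, T} \hookrightarrow L^\infty(0, T; X_{\eta - 1/p, p, q}) \hookrightarrow L^\infty(0, T; L^q(\Omega)^2)$ to bound the two $L^q$-factors uniformly in time. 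The remaining time integral, with weight $t^{p(1 - \eta) - 1}$ and integrand $\|f\|_{H^{2,q}}^{1/2 + 1/(2q)} \|g\|_{H^{2,q}}^{1/2 + 1/(2q)}$, is then handled by H\"older: the balance $\eta = 1/p + 1/q$ is precisely what forces the temporal exponents to close, yielding a $T$-independent constant and the product structure $\|f\|_{\mathbb{E}_1} \|g\|_{\mathbb{E}_1}$.

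For part (ii), I would split $\tilde{f} = e^{-\mathcal{A}_1 t} f_0 + f$ and $\tilde{g} = e^{-\mathcal{A}_2 t} g_0 + g$, producing four bilinear contributions. The term $f \cdot \nabla_H g$ is covered by part (i), giving the $C_4$-bound. Each mixed term, say $e^{-\mathcal{A}_1 t} f_0 \cdot \nabla_H g$, is treated by Proposition \ref{prop_bilinear_estimate_sobolev} applied pointwise, with the semigroup factor controlled via Proposition \ref{prop_pointwise_estimate_semigroup} at fractional power $\theta = 1/2 + 1/(2q)$, and the remaining factor absorbed into its $\mathbb{E}_1$-norm; this produces the $C_2$- and $C_3$-terms. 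The purely semigroup-semigroup term is the most delicate: I would split the required $H^{1 + 1/q, q}$ regularity between the two semigroups via two applications of Proposition \ref{prop_pointwise_estimate_semigroup} at fractional powers $\theta_1 + \theta_2 = 1/2 + 1/(2q)$, yielding a time integrand of order $t^{-\theta_1 - \theta_2}$ times the combined exponential decay. Integrability at $t = 0$ against $t^{p(1-\eta) - 1}$ follows, once again, from the critical choice $\eta = 1/p + 1/q$, while convergence at infinity against $e^{\gamma p t}$ requires the combined exponential decay to dominate $e^{\gamma t}$.

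The main obstacle is this last point. The exclusion of $\mathcal{A}_1 = \mathcal{A}_2 = A_q$ is exactly what guarantees that at least one of the semigroups contributes the fast decay rate $\mu_\ast$, rather than the possibly-smaller minimum spectrum of $A_q$ alone; without this, the integral at infinity would diverge for $\gamma$ chosen close to $\mu_\ast$. The bookkeeping is to split the fractional exponent $1/2 + 1/(2q)$ into $\theta_1, \theta_2 \in [0, 1]$ in such a way that the singularity $t^{-\theta_1 - \theta_2}$ at the origin is compatible with the weight $t^{p(1-\eta) - 1}$, which is possible precisely under the hypothesis $\eta = 1/p + 1/q \leq 1$.
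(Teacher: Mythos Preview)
Your overall plan—apply Proposition~\ref{prop_bilinear_estimate_sobolev} pointwise in time and then close by a temporal H\"older argument—is the same scaffold the paper uses, but two of your concrete steps do not close as written.

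\medskip
\textbf{Part (i): the interpolation endpoint is too weak.} Interpolating $H^{1+1/q,q}$ between $L^q$ and $H^{2,q}$ gives exponent $\alpha = \tfrac12 + \tfrac1{2q}$ on the $H^{2,q}$ factor. After pulling out the $L^\infty_t L^q_x$ piece via the trace embedding, the time integral you must bound is
\[
\int_0^T t^{p(1-\eta)-1}\,\Vert f(t)\Vert_{H^{2,q}}^{p\alpha}\,\Vert g(t)\Vert_{H^{2,q}}^{p\alpha}\,dt,
\]
and since $2\alpha = 1 + 1/q > 1$, no H\"older splitting returns you to $\Vert f\Vert_{L^p_\eta H^{2,q}}$ and $\Vert g\Vert_{L^p_\eta H^{2,q}}$; the extra $p/q$ power of $\Vert\cdot\Vert_{H^{2,q}}$ has nowhere to go. The fix is to interpolate against the \emph{trace space} $X_{1/q,p,q}\hookrightarrow B^{2/q}_{q,p}$ rather than $L^q$: then the exponent on $H^{2,q}$ becomes exactly $1/2$, and Cauchy--Schwarz in time closes. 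The paper avoids this bookkeeping altogether by combining the Hardy inequality with the mixed derivative theorem to obtain the single embedding
\[
\{h\in\mathbb{E}_{1,\eta,p,q,T,e^{\gamma t}}:h|_{t=0}=0\}\;\hookrightarrow\;L^{2p}_{(1+\eta)/2}\bigl(0,T;H^{1+1/q,q}(\Omega)^2\bigr),
\]
after which the bilinear estimate is just Cauchy--Schwarz in $L^{2p}_{(1+\eta)/2}$.

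\medskip
\textbf{Part (ii): the semigroup--semigroup term.} Proposition~\ref{prop_bilinear_estimate_sobolev} requires \emph{each} factor in $H^{1+1/q,q}$; there is no version here letting you distribute a total of $1+1/q$ derivatives between $e^{-\mathcal{A}_1 t}f_0$ and $e^{-\mathcal{A}_2 t}g_0$. With the correct reading, the pointwise bound from Proposition~\ref{prop_pointwise_estimate_semigroup} starting from $f_0,g_0\in L^q$ would produce the singularity $t^{-(1+1/q)}$, and
\[
\int_0^1 t^{p(1-\eta)-1}\,t^{-p(1+1/q)}\,dt
\]
diverges for every choice of $p,q$ with $\eta = 1/p+1/q$. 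What rescues the estimate is the initial regularity $f_0,g_0\in X_{1/q,p,q}$, which your sketch never uses. The paper handles this through the real-interpolation characterisation of the trace space (Proposition~3.4.3 in \cite{PrussSimonett2016}),
\[
\Vert e^{\gamma t}e^{-t\tilde{A}_{\mu,q}}f_0\Vert_{L^{2p}_{(1+\eta)/2}(0,T;H^{1+1/q,q})}
\;\le\; C\,\Vert f_0\Vert_{D_{\tilde{A}_{\mu,q}}(1-\eta,\,2p)}
\;\le\; C\,\Vert f_0\Vert_{X_{1/q,p,q}},
\]
which places the semigroup orbit in the \emph{same} intermediate space $L^{2p}_{(1+\eta)/2}H^{1+1/q,q}$ already used for $f,g$ in part~(i). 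All four cross-terms of (ii) then fall out of a single H\"older step, and the exclusion $\mathcal{A}_1=\mathcal{A}_2=A_q$ is needed only so that the exponential weight $e^{\gamma t}$ with $\gamma\le\mu_\ast$ is absorbed by at least one factor.
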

\begin{proof}
    The Hardy inequality is such
    \begin{align*}
        \Vert
            \phi
        \Vert_{L^p_{1 + \theta - s}(0, T)}
        \leq C \Vert
            \phi
        \Vert_{H^{s, p}_{1 + \theta}(0, T)}
    \end{align*}
    for $s \in [0, 1]$, $1/p < \theta < 1$, and $\phi \in H^{s, p}_{1 + \theta}(0, T)$ satisfying $\phi |_{t = 0} = 0$.
    Combining with the Hardy inequality, Proposition \ref{prop_bilinear_estimate_sobolev}, and the mixed derivative theorem, we find
    \begin{align*}
        & \Vert
            f \cdot \nabla_H g
        \Vert_{\mathbb{E}_{0, \eta, p, q, T, e^{\gamma t}}}
        + \left \Vert
            \int_{-l}^{x_3}
                \mathrm{div}_H f
            dz \,
            \partial_3 g
        \right \Vert_{\mathbb{E}_{0, \eta, p, q, T, e^{\gamma t}}} \\
        & \leq C \Vert
            e^{\gamma t} f
        \Vert_{L^{2p}_{(1 + \eta)/2}(0, T; H^{1 + 1/q, q}(\Omega)^2)}
        \Vert
            g
        \Vert_{L^{2p}_{(1 + \eta)/2}(0, T; H^{1 + 1/q, q}(\Omega)^2)}.
    \end{align*}
    Applying the Hardy inequality again, we have
    \begin{align*}
        & \Vert
            e^{\gamma t} f
        \Vert_{L^{2p}_{(1 + \eta)/2}(0, T; H^{1 + 1/q, q}(\Omega)^2)}\\
        & \leq C \Vert
            e^{\gamma t} f
        \Vert_{H^{1/2p + (1 - \eta)/2, p}_{\eta}(0, T; H^{1 + 1/q, q}(\Omega)^2)}
        \leq C \Vert
            f
        \Vert_{\mathbb{E}_{0, \eta, p, q, T, e^{\gamma t}}}.
    \end{align*}
    The same inequality holds for $g$, then we obtain (\ref{eq_bilinear_estimate_bilinear_maximal_regularity_1}).
    By the choice of the time weight index $\eta$, the embedding constants are independent of $T$.
    We use Proposition 3.4.3 in \cite{PrussSimonett2016} to see
    \begin{align} \label{eq_bound_for_linear_part_in_mixed_space}
        \Vert
            e^{\gamma t} e^{- t \tilde{A}_{\mu, q}} f_0
        \Vert_{L^{2p}_{(1 + \eta)/2}(0, T; H^{1 + 1/q, q}(\Omega)^2)}
        \leq C \Vert
            f_0
        \Vert_{D_{\tilde{A}_{\mu, q}}(1 - \eta, 2p)}
        \leq C \Vert
            f_0
        \Vert_{X_{1/q, p, q}}.
    \end{align}
    The embedding constants are independent of $T$.
    The same inequality holds for $e^{- t \tilde{A}_{\mu, q}} g_0$.
    Combining with (\ref{eq_bilinear_estimate_bilinear_maximal_regularity_1}) and the above estimates, we obtain (\ref{eq_bilinear_estimate_bilinear_maximal_regularity_2}).
\end{proof}

\begin{remark} \label{rmk_smallness_constant_for_small_T}
    The estimate (\ref{eq_bound_for_linear_part_in_mixed_space}) implies that the constants $C_j$ for $j = 1, 2, 3$ can be bounded small if we take $T$ sufficiently small since the norm $\Vert \cdot \Vert_{L^{2p}_{(1 + \eta)/2}(0, T; H^{1 + 1/q, q}(\Omega)^2)}$ is a integral norm with respect to $t$.
\end{remark}

\begin{proposition} \label{prop_local_wellposedness_maximal_regularity}
    Let $0 < p, q < 1$ satisfying $\frac{1}{p} + \frac{1}{q} \leq 1$.
    Let $\eta = \frac{1}{p} + \frac{1}{q}$ and $T>0$.
    Let $V_0 \in X_{1/q, p, q}$ and $F \in \mathbb{E}_{0, \eta, p, q, T}$.
    Assume $v \in \mathbb{E}_{1, \eta, p, q, T, e^{\gamma_1 t}}$ be the solution with initial data $v_0 \in X_{1/q, p, q}$ and an external force $f \in \mathbb{E}_{0, \eta, p, q, T, e^{\gamma_0 t}}$ to (\ref{eq_primitive}) such that
    \begin{align*}
        \Vert
            \partial_t v
        \Vert_{\mathbb{E}_{0, \eta, p, q, T, e^{\gamma_1 t}}}
        + \Vert
            A_q v
        \Vert_{\mathbb{E}_{0, \eta, p, q, T, e^{\gamma_1 t}}}
        \leq C
    \end{align*}
    for some $T$-independent constants $C>0$ and $\gamma_0 \geq \gamma_1 > 0$.

    (i) There exit $T_0>0$, if $T \leq T_0$, there exits a unique solution $V \in \mathbb{E}_{1, \eta, p, q, T}$ to (\ref{eq_diff}) with initial data $V_0$ such that
    \begin{align*}
        \Vert V \Vert_{\mathbb{E}_{1, \eta, p, q, T}}
        \leq C
    \end{align*}
    for some constant $C>0$.

    (ii) There exists $\varepsilon_0 > 0$, if $\Vert v \Vert_{\mathbb{E}_{1, \eta, p, q, T, e^{\mu_\ast t}}}, \Vert V_0 \Vert_{X_{1/q, p, q, T, e^{\mu_\ast t}}}, \Vert F \Vert_{\mathbb{E}_{0, \eta, p, q, T, e^{\mu_\ast t}}} \leq \varepsilon_0$, there exits a unique solution $V \in \mathbb{E}_{1, \eta, p, q, T, e^{\mu_\ast t}}$ to (\ref{eq_diff}) with initial data $V_0$ such that
    \begin{align*}
        \Vert V \Vert_{\mathbb{E}_{1, \eta, p, q, T, e^{\mu_\ast t}}}
        \leq C
    \end{align*}
    for some constant $C>0$, where the case $T = \infty$ is included.

    (iii) If $F \in H^{1, p}_\eta (0, T; L^q(\Omega)^2)$ and $t \partial_t F \in \mathbb{E}_{1, \eta, p, q, e^{\gamma t}}$, there exits $T_0 > 0$, if $T \leq T_0$,
    \begin{align*}
        \Vert
            V
        \Vert_{\mathbb{E}_{1, \eta, p, q, e^{\gamma t}}}
        + \Vert
            t \partial_t V
        \Vert_{\mathbb{E}_{1, \eta, p, q, e^{\gamma t}}}
        \leq C,
    \end{align*}
    holds for some constant $C>0$.
\end{proposition}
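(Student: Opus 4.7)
The plan is to recast (\ref{eq_diff}) as an abstract evolution equation, solve it by a Banach fixed-point argument in the maximal regularity space $\mathbb{E}_{1,\eta,p,q,T}$, and then bootstrap to obtain the additional $t\partial_t$-regularity. Writing $\tilde{A}_{\mu,q} V = A_q V + \mu P J_\delta V$, equation (\ref{eq_diff}) becomes
\begin{align*}
    \partial_t V + \tilde{A}_{\mu,q} V
    = P F - P\bigl(u\cdot\nabla V + U\cdot\nabla v - U\cdot\nabla V\bigr),
    \qquad V(0)=V_0.
\end{align*}
I would define the solution map $\Phi$ by sending a candidate $V$ to the unique solution $V^*\in\mathbb{E}_{1,\eta,p,q,T}$ of the linear problem with this right-hand side evaluated at $V$. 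The bounded $H^\infty$-calculus for $\tilde{A}_{\mu,q}$ from Lemma \ref{lem_H_infty_for_tilde_A} and the attendant maximal regularity make $\Phi$ well-defined as soon as the right-hand side lies in $\mathbb{E}_{0,\eta,p,q,T}$.

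For (i), I would bound the nonlinearities by Proposition \ref{prop_bilinear_estimate_bilinear_maximal_regularity}. To match its template I would decompose $v = e^{-tA_q}v_0 + v_r$ and $V = e^{-t\tilde{A}_{\mu,q}}V_0 + V_r$, with $v_r|_{t=0}=V_r|_{t=0}=0$ both lying in $\mathbb{E}_1$, and use that $U=(V,W)$ depends linearly on $V$, so each of $u\cdot\nabla V$, $U\cdot\nabla v$, $U\cdot\nabla V$ expands into bilinear expressions of the form handled by (\ref{eq_bilinear_estimate_bilinear_maximal_regularity_2}). Setting up the iteration on a ball of radius $R$ around the purely linear solution, invoking Remark \ref{rmk_smallness_constant_for_small_T} to make the coupling constants $C_1,C_2,C_3$ small for short $T$, and choosing $R$ small to absorb the $V$-quadratic contribution controlled by $C_4$, yields both self-mapping and contraction for $T\le T_0$; Banach's fixed point theorem then gives the unique $V$ with the claimed bound.

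For (ii) I would carry out the same contraction in the exponentially weighted space $\mathbb{E}_{1,\eta,p,q,T,e^{\mu_* t}}$, for which Lemma \ref{lem_H_infty_for_tilde_A} still delivers maximal regularity because $e^{-t\tilde{A}_{\mu,q}}$ decays at rate $\mu_*$; the smallness of $\|v\|_{\mathbb{E}_{1,\eta,p,q,T,e^{\mu_* t}}}$, $\|V_0\|_{X_{1/q,p,q}}$ and $\|F\|_{\mathbb{E}_{0,\eta,p,q,T,e^{\mu_* t}}}$ makes every prefactor in (\ref{eq_bilinear_estimate_bilinear_maximal_regularity_2}) small, so the contraction closes uniformly in $T$, including $T=\infty$. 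For (iii), once the solution $V\in\mathbb{E}_{1,\eta,p,q,T}$ from (i) is in hand, I would apply Proposition \ref{prop_H1_t_bound_linear_case} with effective forcing $G:=PF - P(u\cdot\nabla V + U\cdot\nabla v - U\cdot\nabla V)$; the hypothesis $F\in H^{1,p}_\eta(0,T;L^q(\Omega)^2)$ and bilinear estimates applied after formal time differentiation (pairing $\partial_t v$ and $\partial_t V$ with $v$ and $V$, exploiting their spatial regularity) show $G\in H^{1,p}_\eta$ and $t\partial_t G\in\mathbb{E}_{0,\eta,p,q,T}$, at which point Proposition \ref{prop_H1_t_bound_linear_case} yields the desired bound on $t\partial_t V$.

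The main obstacle is the mixed term $U\cdot\nabla v$: because neither $V$ nor $v$ vanishes at $t=0$, the clean null-initial-data bound (\ref{eq_bilinear_estimate_bilinear_maximal_regularity_1}) is unavailable, and one must rely on (\ref{eq_bilinear_estimate_bilinear_maximal_regularity_2}) in which the constants multiplying $\|V_0\|_{X_{1/q,p,q}}$ and $\|v_0\|_{X_{1/q,p,q}}$ do not automatically shrink. Tracking these prefactors carefully --- extracting smallness from Remark \ref{rmk_smallness_constant_for_small_T} for (i), from the data hypothesis for (ii), and from the exponential factor $e^{\mu_* t}$ combined with the ordering $\gamma_0<\mu_*<\gamma_1$ so that all constants remain $T$-independent --- is the delicate bookkeeping part of the argument.
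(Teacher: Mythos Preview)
Your treatment of (i) and (ii) matches the paper's argument closely: subtract off the semigroup evolution of the initial data so that the remainder has zero trace at $t=0$, apply the bilinear estimate in the form (\ref{eq_bilinear_estimate_bilinear_maximal_regularity_2}), and close a contraction either by taking $T$ small (via Remark \ref{rmk_smallness_constant_for_small_T}) or by smallness of the data. This is exactly what the paper does.

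Your plan for (iii), however, has a genuine circularity. You propose to take the solution $V$ from (i) and feed the nonlinear forcing $G = PF - P(u\cdot\nabla V + U\cdot\nabla v - U\cdot\nabla V)$ into Proposition \ref{prop_H1_t_bound_linear_case}, after first verifying that $t\partial_t G \in \mathbb{E}_{0,\eta,p,q,T}$. But $t\partial_t G$ contains, for instance, $U\cdot\nabla(t\partial_t V)$, and the bilinear estimate bounds this by $C\Vert V\Vert_{\mathbb{E}_1}\Vert t\partial_t V\Vert_{\mathbb{E}_1}$. You do not yet know that $\Vert t\partial_t V\Vert_{\mathbb{E}_1}$ is finite; that is precisely the conclusion you are trying to reach. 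So the hypothesis of Proposition \ref{prop_H1_t_bound_linear_case} cannot be checked in advance.

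The paper resolves this by \emph{not} bootstrapping from the solution of (i). Instead it runs a second fixed-point argument directly in the augmented space
\[
\mathcal{U}_{\eta,p,q,T} = \bigl\{\varphi : \Vert\varphi\Vert_{\mathcal{U}} := \alpha\Vert\varphi\Vert_{\mathbb{E}_1} + \Vert t\partial_t\varphi\Vert_{\mathbb{E}_1} < \infty\bigr\},
\]
estimating $\Vert t\partial_t\tilde{\mathcal{N}}(\tilde{V})\Vert_{\mathbb{E}_0}$ via the bilinear inequality (this produces terms linear in $\Vert t\partial_t\tilde{V}\Vert_{\mathbb{E}_1}$ with small prefactors for small $T$) and combining with Proposition \ref{prop_H1_t_bound_linear_case} to show that the solution map is a contraction on $\mathcal{U}_{\eta,p,q,T}$. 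The parameter $\alpha$ is chosen large enough to absorb the constant coming from Proposition \ref{prop_H1_t_bound_linear_case}. Your a posteriori scheme could be salvaged by turning it into an a priori estimate plus an approximation argument, but as written it does not close.
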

\begin{proof}
    We prove (i) and (ii).
    We set
    \begin{align*}
        \mathcal{N}(V)
        = - U \cdot \nabla V
        + u \cdot \nabla V
        + V \cdot \nabla v.
    \end{align*}
    Let $\Phi^\prime \in \mathbb{E}_{1, \eta, p, q, T}$ be the solution to
    \begin{align*}
        \partial_t \Phi^\prime
        + \tilde{A}_{\mu, q} \Phi^\prime
        & = 0, \\
        \Phi^\prime (0)
        & = V_0.
    \end{align*}
    We put $\Phi = (\Phi^\prime, \Phi_3)$ for $\Phi_3 = \int_{-l}^{x_3} \mathrm{div}_H \Phi^\prime dz$.
    By the maximal regularity of $\tilde{A}_{\mu, q}$, $\Phi$ satisfies
    \begin{align*}
        \Vert
            e^{\mu_\ast t} \Phi^\prime
        \Vert_{\mathbb{E}_{1, \eta, p, q, T}}
        \leq C \Vert
            V_0
        \Vert_{X_{1/q, p, q}}
    \end{align*}
    for some $T$-independent constant $C > 0$.
    Let $\tilde{V} = V - \Phi^\prime$, then $\tilde{V}$ satisfies
    \begin{align} \label{eq_for_V_tilde_abstract_formulations}
        \begin{split}
            \partial_t \tilde{V}
            + \tilde{A}_{\mu, q} \tilde{V}
            & = \tilde{\mathcal{N}}(\tilde{V}), \\
            \tilde{V} (0)
            & = 0,
        \end{split}
    \end{align}
    where
    \begin{align*}
        \tilde{\mathcal{N}}(\tilde{V})
        & = - \tilde{U} \cdot \nabla \tilde{V}
        - \Phi \cdot \nabla \tilde{V}
        - \tilde{U} \cdot \nabla \Phi^\prime
        - \Phi \cdot \nabla \Phi^\prime \\
        & + u \cdot \nabla V
        + u \cdot \nabla \Phi^\prime
        + V \cdot \nabla v
        + \Phi \cdot \nabla v
        + F.
    \end{align*}
    By Proposition \ref{prop_bilinear_estimate_bilinear_maximal_regularity}, we find
    \begin{align} \label{eq_estimate_mathcalN_in_E0}
        \begin{split}
            \Vert
                e^{\tilde{\mu} t} \tilde{\mathcal{N}}(\tilde{V})
            \Vert_{\mathbb{E}_{0, \eta, p, q, T}}
            & \leq C_2 \Vert
                e^{\tilde{\mu} t} \tilde{V}
            \Vert_{\mathbb{E}_{1, \eta, p, q, T}}^2 \\
            & + C_1 \left(
                \Vert
                    e^{\tilde{\mu} t} \Phi^\prime
                \Vert_{\mathbb{E}_{1, \eta, p, q, T}}
                + \Vert
                    e^{\tilde{\mu} t} v
                \Vert_{\mathbb{E}_{1, \eta, p, q, T}}
            \right)
            \Vert
                e^{\tilde{\mu} t} \tilde{V}
            \Vert_{\mathbb{E}_{1, \eta, p, q, T}} \\
            & + C_0 \left(
                \Vert
                    e^{\tilde{\mu} t} \Phi^\prime
                \Vert_{\mathbb{E}_{1, \eta, p, q, T}}^2
                + \Vert
                    e^{\tilde{\mu} t} \Phi^\prime
                \Vert_{\mathbb{E}_{1, \eta, p, q, T}}
                \Vert
                    e^{\tilde{\mu} t} v
                \Vert_{\mathbb{E}_{1, \eta, p, q, T}}
            \right. \\
            & \left.
                \quad \quad \quad 
                + \Vert
                    e^{\tilde{\mu} t} v
                \Vert_{\mathbb{E}_{1, \eta, p, q, T}}^2
                + \Vert
                    e^{\tilde{\mu} t} F
                \Vert_{\mathbb{E}_{1, \eta, p, q, T}}
            \right).
        \end{split}
    \end{align}
    We write $\mathcal{T}: \mathbb{E}_{0, \eta, p, q, T, e^{\tilde{\mu}t}} \rightarrow \mathbb{E}_{1, \eta, p, q, T, e^{\tilde{\mu}t}}$ to denote the solution operator $\mathcal{T} g = X$ for
    \begin{align*}
        \partial_t X
        + \tilde{A}_{\mu, q} X
        & = g, \\
        X (0)
        & = 0.
    \end{align*}
    If we take $T$ small or take $\Vert V_0 \Vert_{X_{1/q, p, q}}$, $\Vert v_0 \Vert_{X_{1/q, p, q}}$, $\Vert F \Vert_{\mathbb{E}_{0, \eta, p, q, T, e^{\tilde{\mu}t}}}$ small, then by Remark \ref{rmk_smallness_constant_for_small_T} we find that $\Vert e^{\tilde{\mu} t} \Phi^\prime \Vert_{\mathbb{E}_{1, \eta, p, q, T, e^{\tilde{\mu} t}}}$, $\Vert e^{\tilde{\mu} t} v \Vert_{\mathbb{E}_{1, \eta, p, q, T, e^{\tilde{\mu} t}}}$, and $\Vert e^{\tilde{\mu} t} F \Vert_{\mathbb{E}_{0, \eta, p, q, T, e^{\tilde{\mu} t}}}$ can be sufficiently small.
    Combining with these observations and the estimate above, we see there exists small $R>0$ such that
    \begin{align*}
        \mathcal{T}\tilde{\mathcal{N}}
        : \mathbb{E}_{1, \eta, p, q, e^{\tilde{\mu}t}}(T) \rightarrow \mathbb{E}_{1, \eta, p, q, e^{\tilde{\mu}t}}(T),
        \quad \Vert \tilde{V} \Vert_{\mathbb{E}_{1, \eta, p, q, T, e^{\tilde{\mu} t}}} \leq R \quad
        \text{for small $R>0$},
    \end{align*}
    is a self-mapping.
    Let $\tilde{V}_1, \tilde{V}_2 \in \mathbb{E}_{1, \eta, p, q, T}$ satisfying
    \begin{align*}
        \Vert \tilde{V}_1 \Vert_{\mathbb{E}_{1, \eta, p, q, T}},
        \Vert \tilde{V}_2 \Vert_{\mathbb{E}_{1, \eta, p, q, T}}
        \leq R.
    \end{align*}
    Using the formula
    \begin{align*}
        &\tilde{\mathcal{N}}(\tilde{V}_1) - \tilde{\mathcal{N}}(\tilde{V}_2) \\
        &= - (\tilde{U}_1 - \tilde{U}_2) \cdot \nabla \tilde{V}_1
        - \tilde{U}_2 \cdot \nabla (\tilde{V}_1 - \tilde{V}_2) \\
        &- \Phi \cdot \nabla (\tilde{V}_1 - \tilde{V}_2)
        - (\tilde{V}_1 - \tilde{V}_2) \cdot \nabla \Phi^\prime \\
        &+ u \cdot \nabla (\tilde{V}_1 - \tilde{V}_2)
        + (\tilde{V}_1 - \tilde{V}_2) \cdot \nabla v
    \end{align*}
    and Proposition \ref{prop_bilinear_estimate_bilinear_maximal_regularity}, we find
    \begin{align} \label{eq_contractivity_of_quadratic_terms_maximal_regularity_space}
        \begin{split}
            & \Vert
                e^{\tilde{\mu} t} \left(
                    \tilde{\mathcal{N}}(\tilde{V}_1) - \tilde{\mathcal{N}}(\tilde{V}_2)
                \right)
            \Vert_{\mathbb{E}_{0, \eta, p, q, T}} \\
            & \leq C \left(
                \Vert
                    e^{\tilde{\mu} t} \tilde{V}_1
                \Vert_{\mathbb{E}_{1, \eta, p, q, T}}
                + \Vert
                    e^{\tilde{\mu} t} \tilde{V}_2
                \Vert_{\mathbb{E}_{1, \eta, p, q, T}}
                + \Vert
                    e^{\tilde{\mu} t} v
                \Vert_{\mathbb{E}_{1, \eta, p, q, T}}
                + \Vert
                    e^{\tilde{\mu} t} \Phi^\prime
                \Vert_{\mathbb{E}_{1, \eta, p, q, T}}
            \right) \\
            &  \quad \times \Vert
                e^{\tilde{\mu} t} \left(
                    \tilde{V}_1 - \tilde{V}_2
                \right)
            \Vert_{\mathbb{E}_{1, \eta, p, q, T}}.
        \end{split}
    \end{align}
    If we take $\Vert \Phi^\prime \Vert_{\mathbb{E}_{1, \eta, p, q, T, e^{\tilde{\mu} t}}}$ and $\Vert v \Vert_{\mathbb{E}_{1, \eta, p, q, T, e^{\tilde{\mu} t}}}$, and $R$ sufficiently small, then $\mathcal{T}\tilde{\mathcal{N}}$ is a contraction mapping on $\mathbb{E}_{1, \eta, p, q, e^{\tilde{\mu}t}}(T)$.
    Banach's fixed point theorem implies that there exists a unique solution $\tilde{V} \in \mathbb{E}_{1, \eta, p, q, T, e^{\tilde{\mu}t}}$ to (\ref{eq_for_V_tilde_abstract_formulations}) such that $\Vert \tilde{V} \Vert_{\mathbb{E}_{1, \eta, p, q, T, e^{\tilde{\mu}t}}} \leq R$.
    The solution to (\ref{eq_diff}) is given by $V = \tilde{V} + \Phi^\prime$.
    The solution $V$ satisfies
    \begin{align*}
        \Vert
            e^{\tilde{\mu} t} V
        \Vert_{\mathbb{E}_{1, \eta, p, q, T, e^{\tilde{\mu}t}}}
        \leq C.
    \end{align*}
    Suppose there exists two solutions $V_1, V_2$ satisfying $V_1 \neq V_2$ and $\Vert V_j \Vert_{\mathbb{E}_{1, \eta, p, q, T, e^{\tilde{\mu}t}}} \leq R$.
    The estimate (\ref{eq_contractivity_of_quadratic_terms_maximal_regularity_space}) implies if $T$ is small or $R$, $v$, and $\Phi^\prime$ are small, we see
    \begin{align*}
        \Vert
            e^{\tilde{\mu} t} (V_2 - V_2)
        \Vert_{\mathbb{E}_{1, \eta, p, q, T, e^{\tilde{\mu}t}}}
        \leq \varepsilon \Vert
            e^{\tilde{\mu} t} (V_2 - V_2)
        \Vert_{\mathbb{E}_{1, \eta, p, q, T, e^{\tilde{\mu}t}}},
    \end{align*}
    for some $\varepsilon \in (0,1)$.
    This implies $V_1 = V_2$ and a contradiction.
    Therefore, $V$ is unique in $\mathbb{E}_{1, \eta, p, q, T, e^{\tilde{\mu}t}}$.
    We proved (i) and (ii).

    We show (iii).
    We find from Proposition \ref{prop_bilinear_estimate_bilinear_maximal_regularity}
    \begin{align} \label{eq_bound_partial_t_N_tilde_V_in_maximal_regularity_space}
        \begin{split}
            & \Vert
                e^{\tilde{\mu} t} t \partial_t \tilde{\mathcal{N}}(\tilde{V})
            \Vert_{\mathbb{E}_{0, \eta, p, q, T}} \\
            & \leq C_2 \Vert
                e^{\tilde{\mu} t} t \partial_t \tilde{V}
            \Vert_{\mathbb{E}_{1, \eta, p, q, T}}
            \Vert
                e^{\tilde{\mu} t} \tilde{V}
            \Vert_{\mathbb{E}_{1, \eta, p, q, T}} \\
            & + C_{11} \left(
                \Vert
                    e^{\tilde{\mu} t} \Phi^\prime
                \Vert_{\mathbb{E}_{1, \eta, p, q, T}}
                + \Vert
                    e^{\tilde{\mu} t} v
                \Vert_{\mathbb{E}_{1, \eta, p, q, T}}
            \right)
            \Vert
                e^{\tilde{\mu} t} t \partial_t \tilde{V}
            \Vert_{\mathbb{E}_{1, \eta, p, q, T}} \\
            & + C_{12} \left(
                \Vert
                    e^{\tilde{\mu} t} t \partial_t \Phi^\prime
                \Vert_{\mathbb{E}_{1, \eta, p, q, T}}
                + \Vert
                    e^{\tilde{\mu} t} t \partial_t v
                \Vert_{\mathbb{E}_{1, \eta, p, q, T}}
            \right)
            \Vert
                e^{\tilde{\mu} t} \tilde{V}
            \Vert_{\mathbb{E}_{1, \eta, p, q, T}} \\
            & + C_{01} (
                \Vert
                    e^{\tilde{\mu} t} t \partial_t \Phi^\prime
                \Vert_{\mathbb{E}_{1, \eta, p, q, T}}
                + \Vert
                    e^{\tilde{\mu} t} \Phi^\prime
                \Vert_{\mathbb{E}_{1, \eta, p, q, T}}
            )
            (
                \Vert
                    e^{\tilde{\mu} t} t \partial_t v
                \Vert_{\mathbb{E}_{1, \eta, p, q, T}}
                + \Vert
                    e^{\tilde{\mu} t} v
                \Vert_{\mathbb{E}_{1, \eta, p, q, T}}
            ) \\
            & + C_{02} \Vert
                e^{\tilde{\mu} t} t \partial_t F
            \Vert_{\mathbb{E}_{1, \eta, p, q, T}}.
        \end{split}
    \end{align}
    for some constants $C_2>0, C_{ij}>0$ ($i = 0,1, j=1, 2$).
    Note that since $\partial_t \Phi^\prime = \partial_t e^{t \tilde{A}_{\mu, q}/2} e^{t \tilde{A}_{\mu, q}/2} V_0$, we find from Proposition \ref{prop_pointwise_estimate_semigroup} that
    \begin{align*}
        \Vert
            t \partial_t \Phi
        \Vert_{\mathbb{E}_{1, \eta, p, q, T}}
        \leq C \Vert
            V_0
        \Vert_{X_{1/q, p, q}}
    \end{align*}
    for some constant $C > 0$.
    By Propositions \ref{prop_H1_t_bound_linear_case}, we have
    \begin{align} \label{eq_bound_for_t_partialt_T_N}
        \begin{split}
            & \Vert
                t \partial_t \mathcal{T} \mathcal{N} (\tilde{V})
            \Vert_{\mathbb{E}_{1, \eta, p, q, e^{\tilde{\mu}t }}(T)} \\
            & \leq C \left(
                \Vert
                    \mathcal{T} \mathcal{N} (\tilde{V})
                \Vert_{\mathbb{E}_{1, \eta, p, q, e^{\tilde{\mu}t }}(T)}
                + \Vert
                \mathcal{N} (\tilde{V})
                \Vert_{\mathbb{E}_{0, \eta, p, q, e^{\tilde{\mu}t }}(T)}
                + \Vert
                    t \partial_t \mathcal{N} (\tilde{V})
                \Vert_{\mathbb{E}_{0, \eta, p, q, e^{\tilde{\mu}t }}(T)}
            \right).
        \end{split}
    \end{align}
    We define the mixed Banach space $\mathcal{U}_{\eta, p, q, T}$ by
    \begin{align*}
        \mathcal{U}_{\eta, p, q, T}
        & = \Set{
            \varphi \in H^{2, p}_{\eta} (0, T; L^q(\Omega)^2) \cap H^{1, p}_{\eta} (0, T; D(\tilde{A}_{\mu, q}))
        }{
            \Vert
            \varphi
            \Vert_{\mathcal{U}_{\eta, p, q, T}}
            < \infty
        } \\
        \Vert
            \varphi
        \Vert_{\mathcal{U}_{\eta, p, q, T}}
        & := \alpha \Vert
            \varphi
        \Vert_{\mathbb{E}_{0, \eta, p, q, T, e^{\tilde{\mu}t}}}
        + \Vert
            t \partial_t \varphi
        \Vert_{\mathbb{E}_{0, \eta, p, q, T, e^{\tilde{\mu}t}}}
    \end{align*}
    for some constant $\alpha >0$. 
    This $\alpha$ can be taken so large that it absorb the constant $C$ in (\ref{eq_bound_for_t_partialt_T_N}).
    Using the estimate (\ref{eq_bound_partial_t_N_tilde_V_in_maximal_regularity_space}), if we take $T>0$ sufficiently small, then we see $\mathcal{T}: \mathcal{U}_{\eta, p, q, T} \rightarrow \mathcal{U}_{\eta, p, q, T}$ can be a self-mapping.
    By the same way to prove (i), we find that $\mathcal{T}: \mathcal{U}_{\eta, p, q, T} \rightarrow \mathcal{U}_{\eta, p, q, T}$ is a contraction mapping for small $T$.
    Then Banach's fixed point theorem implies the conclusion of (iii).
\end{proof}

\begin{proposition} \label{prop_local_wellposedness_maximal_regularity_DAeq}
    Under the same assumptions for $p, q, \eta, V_0, v_0, v, F$, there exits $T_0>0$, if $T \leq T_0$, there exits a unique solution $V \in \mathbb{E}_{1, \eta, p, q}$ to (\ref{eq_nudging}) with initial data $V_0$ such that $\Vert V \Vert_{\mathbb{E}_{1, \eta, p, q, T}} \leq C$ for some constant $C>0$.
\end{proposition}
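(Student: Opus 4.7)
The plan is to reduce the problem to the local well-posedness of the difference equation (\ref{eq_diff}), already established in Proposition \ref{prop_local_wellposedness_maximal_regularity}, via the affine change of unknown $\tilde V := v - V$, where $v$ is the given true solution of (\ref{eq_primitive_evo}) and $V$ is the unknown of (\ref{eq_nudging}). This is the approach already outlined by the author in the introduction, so the hard analytic work (maximal regularity, bilinear estimates, Banach fixed point) has been front-loaded into Proposition \ref{prop_local_wellposedness_maximal_regularity}.

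First I would set $\tilde V_0 := v_0 - V_0$, which lies in $X_{1/q,p,q}$ because the space is linear and both $v_0, V_0$ belong to it. I then apply Proposition \ref{prop_local_wellposedness_maximal_regularity}(i) with this initial datum, the same external force data $F = f - J_\delta f$ and the same background flow $v$ as prescribed here, to obtain $T_0 > 0$ and, for every $T \leq T_0$, a unique $\tilde V \in \mathbb{E}_{1,\eta,p,q,T}$ solving (\ref{eq_diff}) with the stated maximal-regularity bound. Define the candidate $V := v - \tilde V$. By the assumed bound on $v$ and the maximal-regularity estimate on $\tilde V$, we have $V \in \mathbb{E}_{1,\eta,p,q,T}$ and $V(0) = V_0$ by construction.

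The remaining task is to verify that this $V$ satisfies (\ref{eq_nudging}). Subtracting (\ref{eq_diff}) for $\tilde V$ from (\ref{eq_primitive_evo}) for $v$ and rearranging, the identities $P(f - F) = P J_\delta f$, $J_\delta \tilde V = J_\delta v - J_\delta V$, together with the bilinear identity
\begin{align*}
U \cdot \nabla V
= (u - \tilde U)\cdot\nabla(v - \tilde V)
= u\cdot\nabla v - u\cdot\nabla\tilde V - \tilde U\cdot\nabla v + \tilde U\cdot\nabla\tilde V,
\end{align*}
convert the right-hand side into exactly the forcing of (\ref{eq_nudging}). Here $U = u - \tilde U$ is the 3-velocity reconstructed from $V = v - \tilde V$ through the linear vertical-integration operator, which commutes with the substitution, so that $V$ inherits the divergence constraint $\mathrm{div}_H\,\overline V = 0$ from $v$ and $\tilde V$.

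Uniqueness is obtained by running the same correspondence in reverse: if $V_1, V_2 \in \mathbb{E}_{1,\eta,p,q,T}$ were two solutions of (\ref{eq_nudging}) with the common initial datum $V_0$, then $\tilde V_j := v - V_j$ would be two $\mathbb{E}_{1,\eta,p,q,T}$-solutions of (\ref{eq_diff}) with common initial datum $v_0 - V_0$, contradicting the uniqueness clause of Proposition \ref{prop_local_wellposedness_maximal_regularity}(i), and hence $V_1 = V_2$. The only mild obstacle is notational bookkeeping — keeping straight which variable is the difference and which is the nudging unknown, and checking that the linear reconstruction $V \mapsto U$ preserves the maximal-regularity class so that the resulting nonlinearity lies in $\mathbb{E}_{0,\eta,p,q,T}$; the latter is already implicit in Proposition \ref{prop_bilinear_estimate_bilinear_maximal_regularity}.
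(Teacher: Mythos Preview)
Your reduction is correct, but it is a genuinely different route from the paper's. The paper proves Proposition~\ref{prop_local_wellposedness_maximal_regularity_DAeq} by running a fresh contraction-mapping argument directly on the nudging equation~(\ref{eq_nudging}), using the $H^\infty$-calculus and maximal regularity of the perturbed operator supplied by Proposition~\ref{prop_H_infty_for_A_plus_J}, in parallel to the proof of Proposition~\ref{prop_local_wellposedness_maximal_regularity}. You instead exploit the affine correspondence $\tilde V \leftrightarrow v - \tilde V$ between (\ref{eq_diff}) and (\ref{eq_nudging}) to pull the result back from Proposition~\ref{prop_local_wellposedness_maximal_regularity}(i), which the paper only invokes later (in the introduction and in the Remark after Proposition~\ref{prop_local_wellposedness_maximal_regularity_DAeq}) to pass from local to global existence for (\ref{eq_nudging}). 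Your approach is more economical, since the algebraic identity you display converts the nonlinearity and forcing exactly and the contraction argument need not be repeated; the paper's direct argument, on the other hand, does not require the background solution $v$ to lie in $\mathbb{E}_{1,\eta,p,q,T}$ for the local step, only the linear theory of Proposition~\ref{prop_H_infty_for_A_plus_J}. One small point to keep straight: your uniqueness argument relies on uniqueness in Proposition~\ref{prop_local_wellposedness_maximal_regularity}(i) holding in all of $\mathbb{E}_{1,\eta,p,q,T}$, not just in the fixed-point ball; this is asserted in the statement and follows by the usual continuation argument, but the proof there only spells it out in the ball.
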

\begin{proof}
    The proof is based on Proposition \ref{prop_H_infty_for_A_plus_J} and same kind of contraction argument as Proposition \ref{prop_local_wellposedness_maximal_regularity}.
    Since the proof is quite similar to that of Proposition \ref{prop_local_wellposedness_maximal_regularity}, we omit the detail.
\end{proof}

\begin{remark}
    We know the global well-posedness of the primitive equations, local well-posedness of (\ref{eq_diff}), and the uniqueness of the solution to (\ref{eq_nudging}) in $\mathbb{E}_{1, \eta, p, q, T}$ for some $T>0$.
    Therefore, we can conclude that, if we establish the global well-posedness of (\ref{eq_diff}) in $\mathbb{E}_{1, \eta, p, q, \infty}$, the equations (\ref{eq_nudging}) are also globally well-posed in $\mathbb{E}_{1, \eta, p, q, \infty}$.
\end{remark}

\subsection{Global well-posedness and asymptotic behavior}
We assume a priori bounds of the solution $V$ to (\ref{eq_diff}) such that
\begin{gather} \label{eq_a_priori_bounds}
    \begin{split}
        \Vert
            V (t)
        \Vert_{L^2(\Omega)^2}^2
        \leq e^{- ct} \Vert
            V_0
        \Vert_{L^2(\Omega)^2}^2
        + C \delta \int_0^t
            e^{- c (t - s)}
            \Vert
                \nabla f (s)
            \Vert_{L^2(\Omega)^2}^{2}
        ds, \\
        \Vert
            V (t)
        \Vert_{L^2(\Omega)^2}^2
        + \int_0^t
            \Vert
                \nabla V(s)
            \Vert_{L^2(\Omega)^2}^2
        ds
        \leq \Vert
            V_0
        \Vert_{L^2(\Omega)^2}^2
        + C \delta \int_0^t
            \Vert
                \nabla f (s)
            \Vert_{L^2(\Omega)^2}^{2}
        ds. \\
        \Vert
            V (t)
        \Vert_{H^2(\Omega)^2}^2
        \leq \phi(t)
    \end{split}
\end{gather}
for some $c>0$ and $\phi \in BC(\varepsilon, T]$ for some small $\varepsilon > 0$.
These a priori estimate is proved in Section \ref{sec_a_priori_estimate}.
\begin{lemma} \label{lem_pre_of_main_theorem}
    Under the same assumption of Theorem \ref{thm_main_thoerem}.
    We assume a priori bounds (\ref{eq_a_priori_bounds}).
    Then there exit $\mu_0>0$, $\delta_0 >0$, if $\mu \geq \mu_0$ and $\delta \leq \delta_0$, there exits a unique solution $V \in C(0, T; X_{1/q, p, q})$ to (\ref{eq_nudging}) such that
    \begin{align*}
        \Vert
            e^{\mu_\ast t} \partial_t V
        \Vert_{L^p_\eta(0, T; L^q(\Omega)^2)}
        + \Vert
            e^{\mu_\ast t} V
        \Vert_{L^p_\eta(0, T; H^{2,q}(\Omega)^2)}
        \leq C,
    \end{align*}
    for some constants $\mu_\ast > 0$ and $C>0$, where the case $T = \infty$ is included.
    Moreover, $V$ is exponentially stable in the sense
    \begin{align*}
        \Vert
            V (t)
        \Vert_{X_{1/q, p, q}}
        \leq C e^{ - \mu_\ast t}.
    \end{align*}
\end{lemma}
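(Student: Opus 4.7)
The plan is to combine Proposition \ref{prop_local_wellposedness_maximal_regularity}(iii) for local existence, the a priori bounds (\ref{eq_a_priori_bounds}) for continuation to any finite time, and the small-data global existence Proposition \ref{prop_local_wellposedness_maximal_regularity}(ii) restarted at a sufficiently large time $T_0$ to push the solution out to $t=\infty$ in the exponentially weighted class $\mathbb{E}_{1,\eta,p,q,\infty,e^{\mu_\ast t}}$. The pointwise exponential decay $\Vert V(t)\Vert_{X_{1/q,p,q}} \leq Ce^{-\mu_\ast t}$ will then follow from the trace embedding of this weighted maximal regularity space applied to $e^{\mu_\ast t}V$.

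First I would produce a local-in-time solution $V \in \mathbb{E}_{1,\eta,p,q,T_\ast}$ with $t\partial_t V \in \mathbb{E}_{1,\eta,p,q,T_\ast}$ via Proposition \ref{prop_local_wellposedness_maximal_regularity}(iii), which places $V \in C([0,T_\ast]; X_{1/q,p,q})$. Since the third line of (\ref{eq_a_priori_bounds}) provides a uniform $H^2$-bound $\Vert V(t)\Vert_{H^2}\leq \phi(t)$ on every compact subinterval bounded away from $0$, a standard continuation argument rules out blow-up in the norm controlling Proposition \ref{prop_local_wellposedness_maximal_regularity}(iii) and hence extends $V$ to any finite $[0,T_0]$.

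The crux is to make the shifted data $V(T_0)$, basic flow $v(\cdot+T_0)$, and forcing $F(\cdot+T_0)$ simultaneously smaller than the threshold $\varepsilon_0$ of Proposition \ref{prop_local_wellposedness_maximal_regularity}(ii). The smallness of $v$ and $F$ on $[T_0,\infty)$ in the relevant weighted norms follows from the decay hypothesis (\ref{eq_bounds_for_v}) and the exponential integrability of $e^{\gamma_0 t}f$, once $\mu_\ast \in (0,\gamma_1)$ is fixed in accordance with Lemma \ref{lem_H_infty_for_tilde_A}. For $V(T_0)$, the first line of (\ref{eq_a_priori_bounds}) combined with $\Vert e^{\gamma_0 t}\nabla f\Vert_{L^2(0,\infty;L^2)}<\infty$ yields exponential smallness of $\Vert V(T_0)\Vert_{L^2(\Omega)^2}$; coupling this with the uniform $H^2$-bound from the third line of (\ref{eq_a_priori_bounds}) and the embedding $H^2(\Omega)^2 \hookrightarrow B^{2/q}_{q,p}(\Omega)^2$ via interpolation between $L^2$ and $H^2$ gives
\begin{align*}
    \Vert V(T_0)\Vert_{X_{1/q,p,q}} \longrightarrow 0 \quad \text{as } T_0 \to \infty.
\end{align*}

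Applying Proposition \ref{prop_local_wellposedness_maximal_regularity}(ii) on the shifted interval $[T_0,\infty)$ then yields a unique solution in $\mathbb{E}_{1,\eta,p,q,\infty,e^{\mu_\ast t}}$, which the finite-interval uniqueness from Proposition \ref{prop_local_wellposedness_maximal_regularity}(i) identifies with the continuation of $V$. The trace embedding of the weighted maximal regularity space applied to $e^{\mu_\ast t}V$ then delivers the exponential decay in $X_{1/q,p,q}$. The main obstacle I foresee is the interpolation step that transforms $L^2$-decay plus $H^2$-boundedness into $X_{1/q,p,q}$-smallness, since one must respect the boundary and $\overline{\mathrm{div}_H}$-free conditions built into the definition of $X_{1/q,p,q}$; a secondary technical point is to choose $\mu_\ast$ strictly between the spectral lower bound of $\tilde{A}_{\mu,q}$ supplied by Lemma \ref{lem_H_infty_for_tilde_A} and $\gamma_1$ so that the weighted nonlinear estimates of Proposition \ref{prop_bilinear_estimate_bilinear_maximal_regularity} remain uniform on the tail interval $[T_0,\infty)$.
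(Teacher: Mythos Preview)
Your overall architecture---local existence, continuation via a priori bounds, then a small-data restart at a large time $T_0$---matches the paper's proof. The gap lies in how you obtain smallness of $\Vert V(T_0)\Vert_{X_{1/q,p,q}}$. You invoke ``the uniform $H^2$-bound from the third line of (\ref{eq_a_priori_bounds})'', but that bound reads only $\Vert V(t)\Vert_{H^2}^2\le\phi(t)$ with $\phi\in BC(\varepsilon,T]$; nothing in the a priori estimates asserts that $\phi$ stays bounded as $T\to\infty$, so your interpolation between $L^2$-decay and $H^2$-boundedness may fail to yield $X_{1/q,p,q}$-smallness.

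The paper circumvents this by first running the entire argument in the case $p=q=2$, where the trace space is $X_{1/2,2,2}=H^1_{\overline{\sigma}}$. Here the \emph{second} line of (\ref{eq_a_priori_bounds}), namely $\int_0^\infty\Vert\nabla V(s)\Vert_{L^2}^2\,ds<\infty$, together with the $L^2$-decay in the first line, guarantees a time $t_0$ with $\Vert V(t_0)\Vert_{H^1}\le\varepsilon_0$. Proposition~\ref{prop_local_wellposedness_maximal_regularity}(ii) then gives $V\in\mathbb{E}_{1,1,2,2,\infty,e^{\mu_\ast t}}$, whence $\int_0^\infty e^{2\mu_\ast s}\Vert V(s)\Vert_{D(\tilde{A}_{\mu,2})}^2\,ds<\infty$; this integrability supplies a later time $t_2$ with $\Vert V(t_2)\Vert_{H^2}$ arbitrarily small. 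Only at this point does the embedding $H^2\hookrightarrow X_{1/q,p,q}$ enter, allowing the restart for general $p,q$ via Proposition~\ref{prop_local_wellposedness_maximal_regularity}(ii). There is also a regularity issue you elided: when $1<q<6/5$ the local $L^p$-$L^q$ solution does not embed directly into $H^1$, and the paper inserts a bootstrapping step (using $F\in L^2_tL^2_x$) to connect the $L^q$-solution to the $L^2$-theory before invoking the $p=q=2$ result.
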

\begin{proof}
    We begin by the case $p = q = 2$.
    By (\ref{eq_a_priori_bounds}) and Proposition \ref{prop_local_wellposedness_maximal_regularity}(i), we have a global solution $V \in \mathbb{E}_{1, 1, 2, 2, T}$ for $T < \infty$.
    We consider the case $T = \infty$.
    Since $\int_0^\infty \Vert \nabla V(s) \Vert_{L^2(\Omega)^2}^2 ds$ is bounded, there exits $t_0 > 0$ and small $\varepsilon_0 > 0$ such that
    \begin{align*}
        \Vert V(t_0) \Vert_{H^1(\Omega)^2}
        \leq \varepsilon_0.
    \end{align*}
    Applying Proposition \ref{prop_local_wellposedness_maximal_regularity}(ii), we find the solution $V$ with initial data $V(t_0)$ belongs to $\mathbb{E}_{1, 2, 2, \infty, e^{\mu_\ast t}}$ and satisfies $\Vert V(t) \Vert_{X_{1/q, p, q}} = O(e^{- \mu_\ast t})$.
    We finished the proof for the case $p = q = 2$.
    Since
    \begin{align*}
        \int_0^\infty
            e^{2 \mu_\ast s} \Vert
                V(s)
            \Vert_{D(\tilde{A}_{\mu, 2})}^2
        ds
        < \infty,
    \end{align*}
    if we take $t_1 >0$ sufficiently large, there exits small $\varepsilon_1 > 0$ such that $\Vert V(t_1) \Vert_{D(\tilde{A}_{\mu, 2})} \leq \varepsilon_1$.
    Using this bound, the assumption $F \in H^1_{loc}(0, \infty; L^2(\Omega)^2)$, and Proposition \ref{prop_local_wellposedness_maximal_regularity}(ii, iii), we see $V \in BC(0, \infty; D(\tilde{A}_{\mu, 2}))$ and $\Vert V(t) \Vert_{H^2(\Omega)^2}$ is small for some $t>0$.

    Let $T^\prime$ be the maximal existence time of solution $V$ in $\mathbb{E}_{1, \eta, p, q, T^\prime}$ given by Proposition \ref{prop_local_wellposedness_maximal_regularity}(i).
    Proposition \ref{prop_local_wellposedness_maximal_regularity} implies $V(t) \in D(\tilde{A}_{\mu, q}) \hookrightarrow H^{2,q}_{\overline{\sigma}}(\Omega)$ for $t < T^\prime$.
    When $q \geq 6/5$, we find from the Sobolev embedding that $V(t) \in H^1(\Omega)^2$.
    For the case $1 < q < 6/5$, we use $F \in L^2(0, T; L^2(\Omega)^2)$ and the bootstrapping argument such as \cite{HieberHusseingKashiwabara2016} to improve the regularity as $V(t) \in H^1(\Omega)^2$ for $\frac{T^\prime}{2} \leq t < T^\prime$.
    Therefore, by the result when $p = q = 2$, we can obtain the global $H^1$-solution such that $V \in \mathbb{E}_{1/q, p, q, T^\prime} \cap BC(T^\prime/2, \infty; D(\tilde{A}_{\mu, 2}))$ for all any $1 < q < \infty$.
    Since $H^{2,2}(\Omega)^2 \hookrightarrow X_{1/q, p, q}$, we see $V(t) \in X_{1/q, p, q}$ for all $t > 0$.
    If we take $t_2>0$ sufficiently large, then we find $\Vert V(t_2) \Vert_{H^2(\Omega)^2} \leq \varepsilon_2$ for some small $\varepsilon_2 > 0$.
    By Proposition \ref{prop_local_wellposedness_maximal_regularity} (ii), we deduce
    \begin{align*}
        \Vert
            e^{\mu_\ast t} \partial_t V
        \Vert_{L^p_\eta(0, \infty; L^q(\Omega)^2)}
        + \Vert
            e^{\mu_\ast t} V
        \Vert_{L^p_\eta(0, \infty; H^{2,q}(\Omega)^2)}
        \leq C
    \end{align*}
    for some constant $C>0$, and
    \begin{align*}
        \Vert
            V (t)
        \Vert_{X_{1/q, p, q}}
        = O(e^{ - \mu_\ast t}).
    \end{align*}
    for some constant $C>0$.

\end{proof}

\section{A Priori Estimate} \label{sec_a_priori_estimate}
\subsection{Global well-posedness and a priori estimate}

\subsubsection{$L^2$-estimates}
\begin{proposition} \label{prop_trigonal_estimate}
    Let $p \in (1, \infty)$ and $r_1, r_2 \in (1 - 1/p, \infty)$ satisfying $1 - 1/p \geq 1/r_1 + 1/r_2$.
    Then
    \begin{align*}
        &\left|
            \int_\Omega
                \left(
                    \int_{-l}^{x_3}
                        f
                    dz
                \right)
                g h
            dx
        \right| \\
        & \leq C \Vert
            f
        \Vert_{L^2(\Omega)}^{\frac{2}{p}}
        \Vert
            f
        \Vert_{H^1_{x^\prime}L^2_{x_3}(\Omega)}^{1 - \frac{2}{p}}
        \Vert
            g
        \Vert_{L^2(\Omega)}^{\frac{2}{r_2}}
        \Vert
            g
        \Vert_{H^1_{x^\prime}L^2_{x_3}(\Omega)}^{1 - \frac{2}{r_1}}
        \Vert
            h
        \Vert_{L^2(\Omega)}^{\frac{2}{r_2}}
        \Vert
            h
        \Vert_{H^1_{x^\prime}L^2_{x_3}(\Omega)}^{1 - \frac{2}{r_2}}
    \end{align*}
    for some constant $C>0$ and all $f, g, h \in H^1(\Omega)$.
\end{proposition}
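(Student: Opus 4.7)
The approach is to reduce the three-dimensional trilinear integral to a two-dimensional integral on the horizontal slice $\Torus^2$ via Cauchy-Schwarz in the vertical variable, and then to apply the two-dimensional Gagliardo-Nirenberg inequality on $\Torus^2$.

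First I would bound the vertical integral pointwise in $x^\prime \in \Torus^2$ by Cauchy-Schwarz in the vertical variable, and estimate the product $gh$ on each fiber in the same way:
\[
\left| \int_{-l}^{x_3} f(x^\prime, z)\, dz \right| \leq \sqrt{l}\, \Vert f(x^\prime, \cdot) \Vert_{L^2_{x_3}}, \qquad \int_{-l}^0 |g\, h|(x^\prime, x_3)\, dx_3 \leq \Vert g(x^\prime, \cdot) \Vert_{L^2_{x_3}}\, \Vert h(x^\prime, \cdot) \Vert_{L^2_{x_3}}.
\]
Setting $\tilde F(x^\prime) := \Vert f(x^\prime, \cdot) \Vert_{L^2_{x_3}}$, $G(x^\prime) := \Vert g(x^\prime, \cdot) \Vert_{L^2_{x_3}}$, and $H(x^\prime) := \Vert h(x^\prime, \cdot) \Vert_{L^2_{x_3}}$, Fubini reduces the left-hand side of the claim to a multiple of $\int_{\Torus^2} \tilde F\, G\, H\, dx^\prime$.

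Next, Hölder's inequality on $\Torus^2$ with exponents $p, r_1, r_2$ — the slack in the assumption $1/p + 1/r_1 + 1/r_2 \leq 1$ being absorbed by the finite measure of $\Torus^2$ — gives
\[
\int_{\Torus^2} \tilde F\, G\, H\, dx^\prime \leq C\, \Vert \tilde F \Vert_{L^p(\Torus^2)}\, \Vert G \Vert_{L^{r_1}(\Torus^2)}\, \Vert H \Vert_{L^{r_2}(\Torus^2)}.
\]
To each factor I would then apply the two-dimensional Gagliardo-Nirenberg inequality $\Vert \varphi \Vert_{L^s(\Torus^2)} \leq C\, \Vert \varphi \Vert_{L^2(\Torus^2)}^{2/s}\, \Vert \varphi \Vert_{H^1(\Torus^2)}^{1 - 2/s}$ (which is valid for $s \in [2, \infty)$; for $s < 2$ the analogous interpolation follows trivially from $L^2 \hookrightarrow L^s$ on the bounded domain $\Torus^2$) with $s = p, r_1, r_2$ respectively.

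Finally, the horizontal norms of $\tilde F, G, H$ on $\Torus^2$ need to be transferred back to the anisotropic mixed-norm spaces on $\Omega$. Fubini gives $\Vert \tilde F \Vert_{L^2(\Torus^2)} = \Vert f \Vert_{L^2(\Omega)}$, while Minkowski's integral inequality applied through the pointwise bound $|\nabla_{x^\prime} \Vert f(x^\prime, \cdot) \Vert_{L^2_{x_3}}| \leq \Vert \nabla_{x^\prime} f(x^\prime, \cdot) \Vert_{L^2_{x_3}}$ yields $\Vert \nabla_{x^\prime} \tilde F \Vert_{L^2(\Torus^2)} \leq \Vert \nabla_{x^\prime} f \Vert_{L^2(\Omega)} \leq \Vert f \Vert_{H^1_{x^\prime} L^2_{x_3}(\Omega)}$, and analogously for $G, H$. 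Chaining all the inequalities produces the stated trilinear estimate. The main subtlety lies in the Minkowski step, where the pointwise chain rule for $\nabla_{x^\prime} \tilde F$ has to be justified by a standard regularization near the zero set of $f(x^\prime, \cdot)$; once this is handled, the remaining bookkeeping on the interpolation exponents $2/p, 2/r_1, 2/r_2$ is routine and matches the condition $1/p + 1/r_1 + 1/r_2 \leq 1$.
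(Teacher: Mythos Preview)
Your argument is correct and uses the same core ingredients as the paper---H\"older in the horizontal variable together with the two-dimensional Gagliardo--Nirenberg interpolation $\Vert \varphi \Vert_{L^s(\Torus^2)} \leq C \Vert \varphi \Vert_{L^2}^{2/s} \Vert \varphi \Vert_{H^1}^{1-2/s}$---but the order of operations is reversed. The paper first applies H\"older in $x'$ with exponents $p,p'$ to the factors $\int_{-l}^0 |f|\,dz$ and $\int_{-l}^0 |g||h|\,dz$, then uses Gagliardo--Nirenberg \emph{slice-by-slice} (i.e.\ for each fixed $x_3$) and finishes with H\"older in $x_3$ to combine the slices. You instead collapse to $\Torus^2$ first via Cauchy--Schwarz in $x_3$, work with the scalar functions $\tilde F, G, H$, and apply Gagliardo--Nirenberg once to each. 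The trade-off is that the paper's route avoids your Minkowski/chain-rule step $|\nabla_{x'} \Vert f(x',\cdot)\Vert_{L^2_{x_3}}| \leq \Vert \nabla_{x'} f(x',\cdot)\Vert_{L^2_{x_3}}$, while your route keeps the interpolation purely two-dimensional and sidesteps the slice-wise bookkeeping. Both are short and equivalent in strength.
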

\begin{proof}
    The Sobolev inequalities and the interpolation inequalities yield
    \begin{align*}
        & \left|
            \int_\Omega
                \left(
                    \int_{-l}^{x_3}
                        f
                    dz
                \right)
                g h
            dx
        \right| \\
        & \leq \left \Vert
            \int_{-l}^{0}
                |f|
            dz
        \right \Vert_{L^p_{x^\prime}}
        \left \Vert
            \int_{-l}^{0}
                |g| |h|
            dz
        \right \Vert_{L^{p^\prime}_{x^\prime}} \\
        & \leq C \int_{-l}^0
            \Vert
                f
            \Vert_{L^2(\Omega)^2}^{\frac{2}{p}}
            \Vert
                f
            \Vert_{H^1_{x^\prime}(\Torus^2)}^{\frac{2}{p}}
        dz
        \int_{-l}^0
            \Vert
                g
            \Vert_{L^2_{x^\prime}(\Torus^2)}^{\frac{2}{r_2}}
            \Vert
                g
            \Vert_{H^1_{x^\prime}(\Torus^2)}^{1 - \frac{2}{r_1}}
            \Vert
                h
            \Vert_{L^2_{x^\prime}(\Torus^2)}^{\frac{2}{r_2}}
            \Vert
                h
            \Vert_{H^1_{x^\prime}(\Torus^2)}^{1 - \frac{2}{r_2}}
        dz \\
        & \leq C \Vert
            f
        \Vert_{L^2(\Omega)^2}^{\frac{2}{p}}
        \Vert
            f
        \Vert_{H^1_{x^\prime}L^2_{x_3}(\Omega)^2}^{1 - \frac{2}{p}}
        \Vert
            g
        \Vert_{L^2(\Omega)^2}^{\frac{2}{r_2}}
        \Vert
            g
        \Vert_{H^1_{x^\prime}L^2_{x_3}(\Omega)^2}^{1 - \frac{2}{r_1}}
        \Vert
            h
        \Vert_{L^2(\Omega)^2}^{\frac{2}{r_2}}
        \Vert
            h
        \Vert_{H^1_{x^\prime}L^2_{x_3}(\Omega)^2}^{1 - \frac{2}{r_2}}.
    \end{align*}
\end{proof}
We use $L^\infty_t H^1_x$- and $L^\infty_t H^2_x$-estimates and the exponential decay by Hieber and Kashiwabara \cite{HieberKashiwabara2016} and Giga et al \cite{GigaGriesHieberHusseinKashiwabara2017_analiticity}.
We use integration by parts to (\ref{eq_nudging}) and Proposition \ref{prop_trigonal_estimate} to see that
\begin{align*}
    & \frac{\partial_t}{2} \Vert
        V
    \Vert_{L^2 (\Omega)^2}^2
    + \Vert
        \nabla V
    \Vert_{L^2(\Omega)^2}^2 \\
    & \leq \left |
        \int_\Omega
            (U \cdot \nabla v) \cdot V
            + F \cdot V
        dx
    \right |
    - \int_\Omega
        \mu J_\delta V \cdot V
    dx\\
    & \leq \Vert
        V
    \Vert_{L^2(\Omega)^2}
    \Vert
        \nabla_H v
    \Vert_{L^6(\Omega)^4}
    \Vert
        V
    \Vert_{L^3(\Omega)^2} \\
    & + \Vert
        \nabla_H V
    \Vert_{L^2(\Omega)^4}
    \Vert
        \partial_3 v
    \Vert_{L^2(\Omega)^2}^{\frac{1}{2}}
    \Vert
        \partial_3 v
    \Vert_{H^1(\Omega)^2}^{\frac{1}{2}}
    \Vert
        V
    \Vert_{L^2(\Omega)^2}^{\frac{1}{2}}
    \Vert
        V
    \Vert_{H^1(\Omega)^2}^{\frac{1}{2}} \\
    & + \Vert
        F
    \Vert_{L^2(\Omega)^2}
    \Vert
        \nabla V
    \Vert_{L^2(\Omega)^2}
    - \mu \Vert
        V
    \Vert_{L^2(\Omega)^2}^2
    + \mu \delta \Vert
        \nabla V
    \Vert_{L^2(\Omega)^2}^2.
\end{align*}
Using the interpolation inequality and the embedding
\begin{align*}
    \Vert
        \varphi
    \Vert_{L^3(\Omega)^2}
    \leq C \Vert
    \varphi
    \Vert_{L^2(\Omega)^2}^{\frac{1}{2}}
    \Vert
        \varphi
    \Vert_{\dot{H}^1(\Omega)}^{\frac{1}{2}} ,\quad
    \Vert
        \varphi
    \Vert_{L^6(\Omega)}
    \leq C \Vert
        \varphi
    \Vert_{H^1(\Omega)}
\end{align*}
for $\varphi \in H^1(\Omega)$ and taking $\delta > 0$ sufficiently small, we have
\begin{align*}
    & \partial_t \Vert
        V
    \Vert_{L^2 (\Omega)^2}^2
    + \Vert
        \nabla V
    \Vert_{L^2(\Omega)^2}^2 \\
    & \leq C \left(
        \Vert
            \nabla_H v
        \Vert_{H^1(\Omega)}^{4/3}
        + \Vert
            \nabla v
        \Vert_{L^2(\Omega)^6}
        \Vert
            \nabla v
        \Vert_{H^1(\Omega)^6}
        + \Vert
            \nabla v
        \Vert_{L^2(\Omega)^6}^2
        \Vert
            \nabla v
        \Vert_{H^1(\Omega)^6}^2
    \right)
    \Vert
        V
    \Vert_{L^2(\Omega)^2}^2 \\
    & - \mu
    \Vert
        V
    \Vert_{L^2(\Omega)^2}^2
    + C \delta \Vert
        \nabla f
    \Vert_{L^2(\Omega)^2}^2.
\end{align*}
Since $v \in L^\infty (0, \infty; H^2(\Omega)^2)$, we take $\mu > 0$ sufficiently large and apply the Gronwall inequality to obtain
\begin{align*}
    \Vert
        V (t)
    \Vert_{L^2(\Omega)^2}^2
    \leq e^{- ct} \Vert
        V_0
    \Vert_{L^2(\Omega)^2}^2
    + C \delta \int_0^t
        e^{- c (t - s)}
        \Vert
            \nabla f (s)
        \Vert_{L^2(\Omega)^2}^{2}
    ds,
\end{align*}
and
\begin{align*}
    \Vert
        V (t)
    \Vert_{L^2(\Omega)^2}^2
    + \int_0^t
        \Vert
            \nabla V(s)
        \Vert_{L^2(\Omega)^2}^2
    ds
    \leq \Vert
        V_0
    \Vert_{L^2(\Omega)^2}^2
    + C \delta \int_0^t
        \Vert
            \nabla f (s)
        \Vert_{L^2(\Omega)^2}^{2}
    ds,
\end{align*}
for some $c > 0$.


To establish the $L^\infty_t H^1_x$-estimate for $V$, we first get the $L^\infty_t H^1_{x^\prime}$-estimate for $\overline{V}$.
The $L^\infty_t L^4_x$-estimate used to chancel out the boundary trace terms from integration over $(-l ,0)$. The $L^\infty_t L^2_x$-estimate for $\partial_3 \tilde{V} = \partial_3 (V - \overline{V})$ is used to absorb nonlinear terms in the estimate for $\overline{V}$.
\subsubsection{$L^\infty_t H^1_{x^\prime}$-estimates for $\overline{V}$}
We divide (\ref{eq_diff}) into two parts.
The vector $\overline{V}$ satisfies
\begin{align} \label{eq_var_V}
    \begin{split}
        \partial_t \overline{V} - \Delta_H \overline{V} + \nabla_H \Pi
        & = \overline{F}
        - \mu \overline{J_\delta V}
        - \frac{1}{l} \gamma_- \partial_3 V
        - \overline{N} (v, \overline{V}, \tilde{V}),\\
        \mathrm{div}_H \overline{V}
        & = 0,
    \end{split}
\end{align}
where $\tilde{V} = V - \overline{V}$ and
\begin{align*}
    \overline{N} (v, \overline{V}, \tilde{V})
    & = \overline{v} \cdot \nabla_H \overline{V}
    + \overline{V} \cdot \nabla_H \overline{v}
    - \overline{V} \cdot \nabla_H \overline{V}
    + \frac{1}{l} \int_{-l}^0
        \tilde{u} \cdot \nabla \tilde{V} 
        + \tilde{U} \cdot \nabla \tilde{v} 
        - \tilde{U} \cdot \nabla \tilde{V} 
    dz.
\end{align*}
The vector field $\tilde{V}$ satisfies
\begin{equation} \label{eq_tilde_V}
    \begin{aligned}
        \partial_t \tilde{V} - \Delta \tilde{V}
        & = \tilde{F}
        - \mu \widetilde{J_\delta V}
        - \frac{1}{l}\gamma_- \partial_3 V
        - \tilde{N} (v, \overline{V}, \tilde{V})\\
        \mathrm{div} \, \tilde{U}
        & = 0,
    \end{aligned}
\end{equation}
where $\gamma_-$ is the bottom boundary trace operator,
\begin{align*}
    \tilde{N} (v, \overline{V}, \tilde{V})
    & = u \cdot \nabla \tilde{V}
    + \tilde{v} \cdot \nabla_H \overline{V}
    + U \cdot \nabla \tilde{v}
    + \tilde{V} \cdot \nabla_H \overline{v}
    - U \cdot \nabla \tilde{V}
    - \tilde{V} \cdot \nabla_H \overline{V}. \\
    & - \frac{1}{l} \int_{-l}^0
        \tilde{u} \cdot \nabla \tilde{V} 
        + \tilde{U} \cdot \nabla \tilde{v} 
        - \tilde{U} \cdot \nabla \tilde{V} 
    dz,
\end{align*}
and
\begin{gather*}
    \tilde{W}
    = \int^{x_3}_{-l}
        \mathrm{div}_H \tilde{V}
    dz
    = \int^{x_3}_{-l}
        \mathrm{div}_H V
    dz,
    \quad \tilde{U}
    = (\tilde{V}, \tilde{W}).
\end{gather*}
We use the $L^2_tL^2_x$-maximal regularity of 2-D Stokes operator.
By the same way as \cite{HieberHusseingKashiwabara2016}, we find
\begin{align*}
    & \partial_t \Vert
        \nabla_H \overline{V}
    \Vert_{L^2(\Torus^2)^4}^2
    + \mu \Vert
        \nabla_H \overline{V}
    \Vert_{L^2(\Torus^2)^4}^2
    + \Vert
        \Delta_H \overline{V}
    \Vert_{L^2(\Torus^2)^2}^2
    + \Vert
        \nabla_H \Pi
    \Vert_{L^2(\Torus^2)^2}^2\\
    & \leq C \left \Vert
        \overline{F}
        - \frac{1}{l} \gamma_- \partial_3 V
        - \overline{N}(v, \overline{V}, \tilde{V})
        + \mu \overline{K_\delta V}
    \right \Vert_{L^2(\Torus^2)^2}^2\\
    & \leq C \Vert
        F
    \Vert_{L^2(\Omega)^2}^2
    + C \Vert
        \partial_3 V
    \Vert_{L^2(\Omega)^2}^2
    + \frac{1}{100} \Vert
        \nabla \partial_3 V
    \Vert_{L^2(\Omega)^2}^2 \\
    & + \left \Vert
        \overline{N}(v, \overline{V}, \tilde{V})
    \right \Vert_{L^2(\Torus^2)^2}
    + C \mu \delta \Vert
        \nabla V
    \Vert_{L^2(\Omega)^2}^2.
\end{align*}
By the Sobolev embeddings and interpolation inequalities, we have
\begin{align*}
    & \left \Vert
        \overline{v} \cdot \nabla_H \overline{V}
        + \overline{V} \cdot \nabla_H \overline{v}
        - \overline{V} \cdot \nabla_H \overline{V}
    \right \Vert_{L^2(\Torus^2)^2}^2 \\
    & \leq C \Vert
        \overline{v}
    \Vert_{L^2(\Torus^2)^2}
    \Vert
        \overline{v}
    \Vert_{H^1(\Torus^2)^2}
    \Vert
        \nabla \overline{V}
    \Vert_{L^2(\Torus^2)^6}
    \Vert
        \nabla \overline{V}
    \Vert_{H^1(\Torus^2)^6} \\
    & + C \Vert
        \overline{V}
    \Vert_{L^2(\Torus^2)^2}
    \Vert
        \overline{V}
    \Vert_{H^1(\Torus^2)^2}
    \Vert
        \nabla \overline{v}
    \Vert_{L^2(\Torus^2)^6}
    \Vert
        \nabla \overline{v}
    \Vert_{H^1(\Torus^2)^6} \\
    & + C \Vert
        \overline{V}
    \Vert_{L^2(\Torus^2)^2}
    \Vert
        \overline{V}
    \Vert_{H^1(\Torus^2)^2}
    \Vert
        \nabla \overline{V}
    \Vert_{L^2(\Torus^2)^6}
    \Vert
        \nabla \overline{V}
    \Vert_{H^1(\Torus^2)^6},
\end{align*}
and 
\begin{align*}
    & \left \Vert
        \int_{-l}^0
        \tilde{v} \cdot \nabla_H \tilde{V} 
        + \tilde{V} \cdot \nabla_H \tilde{v} 
        - \tilde{V} \cdot \nabla_H \tilde{V} 
        dz
    \right \Vert_{L^2(\Torus^2)^2}^2 \\
    & \leq C \Vert
        v
    \Vert_{H^2(\Omega)}^2
    \Vert
        \nabla_H \tilde{V}
    \Vert_{L^2(\Omega)^2}^2 \\
    & + C \left (
        \int_{-l}^0
            \Vert
                \tilde{V}
            \Vert_{L^2(\Torus^2)^2}^{\frac{1}{2}}
            \Vert
                \tilde{V}
            \Vert_{H^1(\Torus^2)^2}^{\frac{1}{2}}
            \Vert
                \nabla_H \tilde{v}
            \Vert_{L^2(\Torus^2)^4}^{\frac{1}{2}}
            \Vert
                \nabla_H \tilde{v}
            \Vert_{H^1(\Torus^2)^4}^{\frac{1}{2}}
        dz
    \right )^2 \\
    & + C \Vert
        |\tilde{V}| | \nabla \tilde{V}|
    \Vert_{L^2(\Omega)}^2 \\
    & \leq C \Vert
        v
    \Vert_{H^2(\Omega)}^2
    \Vert
        \nabla_H \tilde{V}
    \Vert_{L^2(\Omega)^2}^2 \\
    & + C \Vert
            \tilde{V}
        \Vert_{L^2(\Omega)^2}
        \Vert
            \tilde{V}
        \Vert_{H^1(\Omega)^2}
        \Vert
            \nabla_H \tilde{v}
        \Vert_{L^2(\Omega)^4}
        \Vert
            \nabla_H \tilde{v}
        \Vert_{H^1(\Omega)^4} \\
    & + C \Vert
        |\tilde{V}| | \nabla \tilde{V}|
    \Vert_{L^2(\Omega)}^2.
\end{align*}
Since
\begin{align} \label{eq_w_to_div_H}
    \int_{-l}^0
        \left (
            \int_{-l}^{x_3}
                \mathrm{div}_H \varphi
            dz
        \right )
        \partial_3 \psi
    dx_3
    = - \int_{-l}^0
        ( \mathrm{div}_H \varphi )
        \psi
    dx_3
\end{align}
for all $\varphi, \psi \in H^1(\Omega)^2$ satisfying $\mathrm{div}_H \overline{\varphi} = 0$, we use the same way as above to get
\begin{align*}
    & \left \Vert
        \int_{-l}^0
        w \partial_z \tilde{V} 
        + \tilde{W} \partial_z \tilde{v} 
        - \tilde{W} \partial_z \tilde{V} 
        dz
    \right \Vert_{L^2(\Torus^2)^2}^2 \\
    & \leq C \Vert
        \tilde{V}
    \Vert_{L^2(\Omega)^2}
    \Vert
        \tilde{V}
    \Vert_{H^1(\Omega)^2}
    \Vert
        \nabla_H \tilde{v}
    \Vert_{L^2(\Omega)^4}
    \Vert
        \nabla_H \tilde{v}
    \Vert_{H^1(\Omega)^4} \\
    & + C \Vert
        v
    \Vert_{H^2}^2
    \Vert
        \nabla_H \tilde{V}
    \Vert_{L^2(\Omega)^2}^2
    + C \Vert
        |\tilde{V}| | \nabla \tilde{V}|
    \Vert_{L^2(\Omega)}^2
\end{align*}
Summing up the above bounds and using the Young inequality, we obtain
\begin{align} \label{eq_var_V_H1}
    \begin{split}
        & \partial_t \Vert
            \nabla_H \overline{V} (t)
        \Vert_{L^2(\Torus^2)^4}^2
        + \mu \Vert
            \nabla_H \overline{V} (t)
        \Vert_{L^2(\Torus^2)^4}^2
        + \frac{1}{2} \Vert
            \Delta_H \overline{V} (t)
        \Vert_{L^2(\Torus^2)^2}^2
        + \Vert
            \nabla_H \overline{\Pi} (t)
        \Vert_{L^2(\Torus^2)^2}^2\\
        & \leq \phi_{\overline{H^1}, 1} (t) \Vert
            \nabla_H \overline{V} (t)
        \Vert_{L^2(\Torus^2)^4}^2
        + \frac{1}{100} \Vert
            \partial_3 \nabla V
        \Vert_{L^2(\Torus^2)^2}^2
        + C \Vert
            \tilde{V}| | \nabla \tilde{V}
        \Vert_{L^2(\Omega)^2}^2
        + \phi_{\overline{H^1}, 2} (t),
    \end{split}
\end{align}
for some $L^1 (0, T)$-functions $\phi_{\overline{H^1}, 1}, \phi_{\overline{H^1}, 2}$ depending on $L^\infty_t H^2_x$- and lower order bounds for $v$.
\subsubsection{$L^\infty_t L^4_x$-estimates for $\tilde{V}$}
Multiplying $|\tilde{V}|^2 \tilde{V}$ and integration by parts yields
\begin{align*}
    & \frac{\partial_t}{4} \Vert
        \tilde{V}
    \Vert_{L^4(\Omega)^2}^4
    + \Vert
        \nabla |\tilde{V}|^2
    \Vert_{L^2(\Omega)^2}^2
    + \Vert
        |\tilde{V}| |\nabla \tilde{V}|
    \Vert_{L^2(\Omega)^2}^2 \\
    & \leq \left |
        \int_\Omega
            \left(
                \tilde{F}
                - \frac{1}{l} \gamma_- \partial_3 V
                - \tilde{N} (v, \overline{V}, \tilde{V})
            \right)
            \cdot |\tilde{V}|^2 \tilde{V}
        dx
    \right |
    - \int_\Omega
        \mu \widetilde{J_\delta V}
        \cdot |\tilde{V}|^2 \tilde{V}
    dx \\
    & =: I_1 + I_2 + I_3 + I_4.
\end{align*}
We first find from the Sobolev inequality that
\begin{align*}
    & I_1
    = \left |
        \int_\Omega
            \tilde{F} \cdot |\tilde{V}|^2 \tilde{V}
        dx
    \right | \\
    & \leq C \Vert
        F
    \Vert_{L^2(\Omega)^2}
    \Vert
        |\tilde{V}|^2
    \Vert_{H^1(\Omega)}
    \Vert
        \tilde{V}
    \Vert_{L^4(\Omega)^2}, \\
    &I_4
    = \int_\Omega
        \mu \widetilde{J_\delta V}
        \cdot |\tilde{V}|^2 \tilde{V}
    dx \\
    & \leq - \mu \Vert
        \tilde{V}
    \Vert_{L^4(\Omega)^2}^4
    + C \mu \delta \Vert
        \nabla V
    \Vert_{L^2(\Omega)^4}
    \Vert
        |\tilde{V}|^2
    \Vert_{H^1(\Omega)}
    \Vert
        \tilde{V}
    \Vert_{L^4(\Omega)^2}, 
\end{align*}
By the same way as \cite{HieberKashiwabara2016}, we have
\begin{align*}
    & I_2
    = \left |
        \int_\Omega
            \gamma_- \partial_3 V \cdot |\tilde{V}|^2 \tilde{V}
        dx
    \right |
    \leq \left |
        \int_{\Torus^2}
        \gamma_- \partial_3 V
        \cdot
            \int_{-l}^0
                |\tilde{V}|^2 \tilde{V}
            dz
        dx^\prime
    \right | \\
    & \leq C \Vert
        \partial_3 V
    \Vert_{L^2(\Omega)^2}^{\frac{1}{2}}
    \Vert
        \nabla \partial_3 V
    \Vert_{L^2(\Omega)^4}^{\frac{1}{2}}
    \Vert
        |\tilde{V}|^2
    \Vert_{H^1(\Omega)}
    \Vert
        \tilde{V}
    \Vert_{L^4(\Omega)^2}.
\end{align*}
We estimates $I_3$.
We find from the H\"{o}lder inequality and the trivial inequality
\begin{align*}
    r^{\theta}
    \leq 1 + r^{\theta^\prime},
    \quad \text{for $r \in \Real_{\geq 0}$, $1 \leq \theta \leq \theta^\prime$}, 
\end{align*}
that
\begin{align*}
    & \left |
        \int_\Omega
            (\tilde{v} \cdot \nabla_H\overline{V}) \cdot |\tilde{V}|^2 \tilde{V}
        dx
    \right | \\
    & \leq C \Vert
        \nabla_H \overline{V}
    \Vert_{L^2(\Omega)^4}
    \Vert
        \tilde{V}
    \Vert_{L^4(\Omega)^2}^3
    \Vert
        \tilde{v}
    \Vert_{L^4(\Omega)^2} \\
    & \leq C \Vert
        \nabla_H \overline{V}
    \Vert_{L^2(\Omega)^4}
    (
        1
        + \Vert
            \tilde{V}
        \Vert_{L^4(\Omega)^2}^4
    )
    \Vert
        v
    \Vert_{H^1(\Omega)^2}.
\end{align*}
We see
\begin{align*}
    &\left |
        \int_\Omega
            (U \cdot \nabla \tilde{v}) \cdot |\tilde{V}|^2 \tilde{V}
        dx
    \right | \\
    & \leq \left |
        \int_\Omega
            (\tilde{V} \cdot \nabla_H \tilde{v} + \tilde{W} \partial_3 \tilde{v}) \cdot |\tilde{V}|^2 \tilde{V}
        dx
    \right |
    + \left |
        \int_\Omega
            (\overline{V} \cdot \nabla_H \tilde{v} + \overline{W} \partial_3 \tilde{v}) \cdot |\tilde{V}|^2 \tilde{V}
        dx
    \right | \\
    & =: I_1^\prime + I_2^\prime + I_3^\prime + I_4^\prime.
\end{align*}
By the Sobolev inequalities and the H\"{o}lder inequality, we have
\begin{align*}    
    I_1^\prime
    & \leq C \Vert
        \nabla_H v
    \Vert_{L^2(\Omega)^4}
    \Vert
        |\tilde{V}|^2
    \Vert_{L^2(\Omega)}^{1/2}
    \Vert
        |\tilde{V}|^2
    \Vert_{H^1(\Omega)}^{3/2}, \\
    I_2^\prime
    + I_4^\prime
    & \leq C \Vert
        \nabla_H \tilde{V}
    \Vert_{L^2(\Omega)^4}
    \Vert
        \partial_3 \tilde{v}
    \Vert_{L^2_{x_3} L^\infty_{x^\prime}(\Omega)^2}
    \Vert
        |\tilde{V}|^2 \tilde{V}
    \Vert_{L^2(\Omega)^2} \\
    & \leq C \Vert
        \nabla_H \tilde{V}
    \Vert_{L^2(\Omega)^4}
    \Vert
        \partial_3 \tilde{v}
    \Vert_{H^1(\Omega)^2}^{3/4}
    \Vert
        \partial_3 \tilde{v}
    \Vert_{H^2(\Omega)^2}^{1/4}\\
    & \quad \quad \quad \times \left(
        \Vert
            \tilde{V}
        \Vert_{L^4(\Omega)^2}^3
        + \Vert
            \tilde{V}
        \Vert_{L^4(\Omega)^2}^{\frac{3}{2}}
        \Vert
            \nabla |\tilde{V}|^2
        \Vert_{L^2(\Omega)^3}^{\frac{3}{4}}
    \right) \\
    & \leq C \Vert
        \nabla_H \tilde{V}
    \Vert_{L^2(\Omega)^4}^{\frac{4}{3}}
    \Vert
        \partial_3 \tilde{v}
    \Vert_{H^1(\Omega)^2}
    \Vert
        \tilde{V}
    \Vert_{L^4(\Omega)^2}^4
    + C \Vert
        \partial_3 \tilde{v}
    \Vert_{H^2(\Omega)^2} \\
    & + C \Vert
        \nabla_H \tilde{V}
    \Vert_{L^2(\Omega)^4}^2
    \Vert
        \partial_3 \tilde{v}
    \Vert_{H^1(\Omega)^2}^2
    \Vert
        \tilde{V}
    \Vert_{L^4(\Omega)^2}^4
    + C \Vert
        \nabla_H \tilde{V}
    \Vert_{L^2(\Omega)^4}^2
    + C \Vert
        \partial_3 \tilde{v}
    \Vert_{H^2(\Omega)^2}^2 \\
    & + \frac{1}{16} \Vert
        \nabla |\tilde{V}|^2
    \Vert_{L^2(\Omega)^2}^2, \\
    I_3^\prime
    & \leq C \Vert
        \tilde{V}
    \Vert_{L^2(\Omega)^2}
    \Vert
        \partial_3 \tilde{v}
    \Vert_{L^2_{x_3} L^\infty_{x^\prime}(\Omega)^2}
    \Vert
        |\tilde{V}|^2 \tilde{V}
    \Vert_{L^2(\Omega)^2} \\
    & \leq C \Vert
        \tilde{V}
    \Vert_{L^2(\Omega)^2}
    \Vert
        \partial_3 \tilde{v}
    \Vert_{H^1(\Omega)^2}^{1/2}
    \Vert
        \partial_3 \tilde{v}
    \Vert_{H^2(\Omega)^2}^{1/2} \\
    & \quad \quad \times \left(
        \Vert
            \tilde{V}
        \Vert_{L^4(\Omega)^2}^3
        + \Vert
            \tilde{V}
        \Vert_{L^4(\Omega)^2}^{\frac{3}{2}}
        \Vert
            \nabla |\tilde{V}|^2
        \Vert_{L^2(\Omega)^3}^{\frac{3}{4}}
    \right) \\
    & \leq C \Vert
        \tilde{V}
    \Vert_{L^2(\Omega)^2}^{\frac{4}{3}}
    \Vert
        \partial_3 \tilde{v}
    \Vert_{H^1(\Omega)^2}^{\frac{2}{3}}
    \Vert
        \tilde{V}
    \Vert_{L^4(\Omega)^2}^4\\
    & + C \Vert
        \nabla_H \tilde{V}
    \Vert_{L^2(\Omega)^4}^{\frac{8}{3}}
    \Vert
        \partial_3 \tilde{v}
    \Vert_{H^1(\Omega)^2}^{\frac{4}{3}}
    \Vert
        \tilde{V}
    \Vert_{L^4(\Omega)^2}^4
    + C \Vert
        \partial_3 \tilde{v}
    \Vert_{H^2(\Omega)^2}^2 \\
    & + \frac{1}{16} \Vert
        \nabla |\tilde{V}|^2
    \Vert_{L^2(\Omega)^3}^2.
\end{align*}
The term $\int_\Omega (\tilde{V} \cdot \nabla_H \overline{v}) \cdot |\tilde{V}|^2 \tilde{V} dx$ can be bounded by the same way to bound $I_1^\prime$.
By the same as \cite{HieberKashiwabara2016}, we see that
\begin{align*}
    & \left |
        \int_\Omega
            (\tilde{V} \cdot \nabla_H \overline{V}) \cdot |\tilde{V}|^2 \tilde{V}
        dx
    \right | \\
    & \leq C \Vert
        \nabla\overline{V}
    \Vert_{L^2(\Torus^2)^6}
    \left \Vert
        \int_{-l}^0
            |\tilde{V}|^4
        dz
    \right \Vert_{L^2(\Torus^2)} \\
    & \leq C \Vert
        \nabla\overline{V}
    \Vert_{L^2(\Torus^2)^6}
    \Vert
        |\tilde{V}|^2
    \Vert_{L^2(\Omega)}
    (
        \Vert
            |\tilde{V}|^2
        \Vert_{L^2(\Omega)}
        + \Vert
            \nabla_H |\tilde{V}|^2
        \Vert_{L^2(\Omega)^2}
    ) \\
    & \leq C \Vert
        \nabla \overline{V}
    \Vert_{L^2(\Omega)^6}
    \Vert
        \tilde{V}
    \Vert_{L^4(\Omega)^2}^4
    + C \Vert
        \nabla \overline{V}
    \Vert_{H^1(\Omega)^6}^2
    \Vert
        \tilde{V}
    \Vert_{L^4(\Omega)^2}^4
    + \frac{1}{16} \Vert
        \nabla_H |\tilde{V}|^2
    \Vert_{L^2(\Omega)^2}^2.
\end{align*}
We use (\ref{eq_w_to_div_H}) to get
\begin{align*}
    & \left |
        \int_\Omega
            \int_{-l}^0
                \tilde{u} \cdot \nabla \tilde{V} 
            dz
            \cdot |\tilde{V}|^2 \tilde{V}
        dx
    \right | \\
    & \leq \left |
        \int_\Omega
            \int_{-l}^0
                \tilde{v} \cdot \nabla_H \tilde{V} 
            dz
            \cdot |\tilde{V}|^2 \tilde{V}
        dx
    \right |
    + \left |
        \int_\Omega
            \int_{-l}^0
                (\mathrm{div}_H \tilde{v}) \tilde{V} 
            dz
            \cdot |\tilde{V}|^2 \tilde{V}
        dx
    \right | \\
    & \leq C \Vert
        \tilde{v}
    \Vert_{L^\infty(\Omega)^2}
    \Vert
        \nabla_H \tilde{V}
    \Vert_{L^2(\Omega)^4}
    \Vert
        |\tilde{V}|^2
    \Vert_{L^2(\Omega)}
    \Vert
        \nabla_H |\tilde{V}|^2
    \Vert_{L^2(\Omega)^2}^{\frac{1}{2}} \\
    & + C \Vert
        \tilde{v}
    \Vert_{L^3(\Omega)^2}
    \Vert
        \tilde{V}
    \Vert_{H^1(\Omega)^2}
    \Vert
        |\tilde{V}|^2
    \Vert_{L^2(\Omega)}
    \Vert
        \nabla_H |\tilde{V}|^2
    \Vert_{L^2(\Omega)^2}^{\frac{1}{2}}.
\end{align*}
By the same way as above, we deduce
\begin{align*}
    & \left |
        \int_\Omega
            \int_{-l}^0
                \tilde{U} \cdot \nabla \tilde{v} 
            dz
            \cdot |\tilde{V}|^2 \tilde{V}
        dx
    \right | \\
    & \leq \left |
        \int_\Omega
            \int_{-l}^0
                \tilde{V} \cdot \nabla_H \tilde{v} 
            dz
            \cdot |\tilde{V}|^2 \tilde{V}
        dx
    \right |
    + \left |
        \int_\Omega
            \int_{-l}^0
                (\mathrm{div}_H \tilde{V}) \tilde{v} 
            dz
            \cdot |\tilde{V}|^2 \tilde{V}
        dx
    \right |\\
    & \leq C \Vert
        \tilde{V}
    \Vert_{L^2(\Omega)^2}
    \Vert
        \tilde{v}
    \Vert_{H^3(\Omega)^2}
    \Vert
        |\tilde{V}|^2
    \Vert_{L^2(\Omega)}
    \Vert
        \nabla_H |\tilde{V}|^2
    \Vert_{L^2(\Omega)^2}^{\frac{1}{2}} \\
    & + C \Vert
        \nabla \tilde{V}
    \Vert_{L^2(\Omega)^3}
    \Vert
        \tilde{v}
    \Vert_{H^2(\Omega)^2}
    \Vert
        |\tilde{V}|^2
    \Vert_{L^2(\Omega)}
    \Vert
        \nabla_H |\tilde{V}|^2
    \Vert_{L^2(\Omega)^2}^{\frac{1}{2}},
\end{align*}
and
\begin{align*}
    & \left |
        \int_\Omega
            \int_{-l}^0
                \tilde{U} \cdot \nabla \tilde{V} 
            dz
            \cdot |\tilde{V}|^2 \tilde{V}
        dx
    \right |\\
    & \leq C \int_{-l}^0
        \Vert
            \tilde{V}
        \Vert_{L^4(\Torus^2)^2}
        \Vert
            \nabla_H \tilde{V}
        \Vert_{L^2(\Torus^2)^4}
    dz
    \int_{-l}^0
        \Vert
            |\tilde{V}|^3
        \Vert_{L^4(\Torus^2)}
    dz \\
    & \leq C \Vert
        \tilde{V}
    \Vert_{L^4(\Omega)^2}^2
    \Vert
        \nabla_H \tilde{V}^2
    \Vert_{L^2(\Omega)^4}
    \Vert
        \nabla_H |\tilde{V}|^2
    \Vert_{H^1(\Omega)^2}.
\end{align*}
Summing up the above inequalities and using the Young inequality, we see that there exit some $L^1 (0, T)$-functions $\phi_{\widetilde{L^4}, 1}, \phi_{\widetilde{L^4}, 2}$, and small scaler $\delta_{\widetilde{L^4}} > 0$ such that
\begin{align} \label{eq_tilde_V_L4}
    \begin{split}
        & \frac{\partial_t}{4} \Vert
            \tilde{V} (t)
        \Vert_{L^4(\Omega)^2}^4
        + \mu \Vert
            \tilde{V} (t)
        \Vert_{L^4(\Omega)^2}^4
        + \Vert
            \nabla |\tilde{V} (t)|^2
        \Vert_{L^2(\Omega)^3}^2
        + \Vert
            |\tilde{V}(t)| |\nabla \tilde{V}(t)|
        \Vert_{L^2(\Omega)}^2 \\
        & \leq \phi_{\widetilde{L^4}, 1} (t) \Vert
            \tilde{V} (t)
        \Vert_{L^4(\Omega)^2}^4
        + \delta_{\widetilde{L^4}} \Vert
            \nabla \partial_3 V (t)
        \Vert_{L^2(\Omega)^6}^2
        + \phi_{\widetilde{L^4}, 2} (t)
    \end{split}
\end{align}

\subsubsection{$L^\infty_t L^2_x$-estimates for $\partial_3 V$}
Multiplying $- \partial_3^2 V$ to (\ref{eq_diff}) and integrating over $\Omega$, we have
\begin{align*}
    & \frac{\partial_t}{2} \Vert
         \partial_3 V
    \Vert_{L^2(\Omega)^2}^2
    + \Vert
        \nabla \partial_3 V
    \Vert_{L^2(\Omega)^6}^2\\
    & \leq - \mu \Vert
        \partial_3 V
    \Vert_{L^2(\Omega)^2}^2
    + \left|
        \int_\Omega
            (\partial_3 u \cdot \nabla V) \cdot \partial_3 V
        dx
    \right|
    + \left|
        \int_\Omega
            (\partial_3 U \cdot \nabla v) \cdot \partial_3 V
        dx
    \right| \\
    & + \left|
        \int_\Omega
            (U \cdot \nabla \partial_3 v) \cdot \partial_3 V
        dx
    \right|
    + \left|
        \int_\Omega
            (\partial_3 U \cdot \nabla V) \cdot \partial_3 V
        dx
    \right| \\
    & + \left|
        \int_\Omega
            F \cdot \partial_3^2 V
        dx
    \right|
    + \left|
        \int_\Omega
            K_\delta V \cdot \partial_3^2 V
        dx
    \right| \\
    & =: I_1 + I_2 + I_3 + I_4 + I_5 + I_6 + I_7.
\end{align*}
Using the Sobolev inequality and the interpolation inequality, we have
\begin{align*}
    I_2
    & \leq \Vert
        v
    \Vert_{H^3(\Omega)^2}
    \Vert
        \nabla \tilde{V}
    \Vert_{L^2(\Omega)^6}
    \Vert
        \partial_3 \tilde{V}
    \Vert_{L^2(\Omega)^2},\\
    I_3
    & \leq C \Vert
        v
    \Vert_{H^2(\Omega)^2}
    \Vert
        \partial_3 V
    \Vert_{H^1(\Omega)^2}^{\frac{3}{2}}
    \Vert
        \partial_3 V
    \Vert_{H^1(\Omega)^2}^{\frac{1}{2}} \\
    & + C \Vert
        \nabla_H V
    \Vert_{L^2(\Omega)^2}
    \Vert
        \nabla v
    \Vert_{H^1(\Omega)^2}
    \Vert
        \partial_3 V
    \Vert_{L^2(\Omega)^2}^{\frac{1}{2}}
    \Vert
        \partial_3 V
    \Vert_{H^1(\Omega)^2}^{\frac{1}{2}}.
\end{align*}
We use Proposition \ref{prop_trigonal_estimate} to get
\begin{align*}
    I_4
    & \leq \left|
        \int_\Omega
            (V \cdot \nabla_H \partial_3 v) \cdot \partial_3 V
        dx
    \right|
    + \int_\Omega
        \int_{-l}^0
            |\nabla_H \tilde{V}|
        dz \,
        |\partial_3^2 v| |\partial_3 V|
    dx \\
    & \leq C \Vert
        |V| |\nabla V|
    \Vert_{L^2(\Omega)}
    \Vert
        \nabla_H \partial_3 v
    \Vert_{L^2(\Omega)^6}\\
    &+ C \Vert
        \nabla \tilde{V}
    \Vert_{L^2(\Omega)^6}
    \Vert
        v
    \Vert_{H^2(\Omega)^2}^{\frac{1}{2}}
    \Vert
        v
    \Vert_{H^3(\Omega)^2}^{\frac{1}{2}}
    \Vert
        \partial_3 \tilde{V}
    \Vert_{L^2(\Omega)^2}^{\frac{1}{2}}
    \Vert
        \partial_3 \tilde{V}
    \Vert_{H^1(\Omega)^2}^{\frac{1}{2}}.
\end{align*}
Since
\begin{align*}
    \int_\Omega
        (\partial_3 V \cdot \nabla_H V) \cdot \partial_3 V
    dz
    & = \int_\Omega
        (\partial_3 V \cdot \nabla_H \tilde{V}) \cdot \partial_3 V
    dz
    + \int_\Omega
        (\partial_3 V \cdot \nabla_H \overline{V}) \cdot \partial_3 V
    dz \\
    & = \int_\Omega
        (\mathrm{div}_H \partial_3 V) \tilde{V} \cdot \partial_3 V
    - \int_\Omega
        (\partial_3 V \cdot \tilde{V}) \mathrm{div}_H \partial_3 V
    dz \\
    & + \int_\Omega
        (\partial_3 V \cdot \nabla_H \overline{V}) \cdot \partial_3 V
    dz,
\end{align*}
we use from (\ref{prop_trigonal_estimate}) to get
\begin{align*}
    I_5
    & \leq C \Vert
        |\tilde{V}| |\nabla \tilde{V}|
    \Vert_{L^2(\Omega)}
    \Vert
        \nabla_H \partial_3 \tilde{V}
    \Vert_{L^2(\Omega)^6} \\
    & + C \Vert
        \partial_3 \overline{V}
    \Vert_{L^2(\Omega)^2}
    \Vert
        \partial_3 \tilde{V}
    \Vert_{L^2(\Omega)^2}
    \Vert
        \partial_3 \tilde{V}
    \Vert_{H^1(\Omega)^2}
    + C \Vert
        \partial_3 \tilde{V}
    \Vert_{L^2(\Omega)^2}
    \Vert
        \partial_3 \tilde{V}
    \Vert_{L^2(\Omega)^2}
    \Vert
        \partial_3 \tilde{V}
    \Vert_{H^1(\Omega)^2}.
\end{align*}
The H\"{o}lder inequality yields
\begin{align*}
    I_6 + I_7
    \leq C \Vert
        \nabla F
    \Vert_{L^2(\Omega)^2}
    \Vert
        \partial_3^2 V
    \Vert_{L^2(\Omega)^2}
    + C \Vert
        \nabla V
    \Vert_{L^2(\Omega)^6}
    \Vert
        \partial_3^2 V
    \Vert_{L^2(\Omega)^2}.
\end{align*}
Summing up the above inequalities and using the Young inequality, we see that there exit $L^1(0, T)$-functions $\phi_{\partial_3^{-1}L^2, 1}, \phi_{\partial_3^{-1}L^2, 2}$ and a scalar $\delta_{\partial_3^{-1}L^2} > 0$ such that
\begin{align} \label{eq_L2_del3V}
    \begin{split}
        & \partial_t \Vert
             \partial_3 V (t)
        \Vert_{L^2(\Omega)^2}^2
        + \mu \Vert
             \partial_3 V  (t)
        \Vert_{L^2(\Omega)^2}^2
        + \Vert
            \nabla \partial_3 V  (t)
        \Vert_{L^2(\Omega)^6}^2\\
        & \leq \phi_{\partial_3^{-1}L^2, 1}(t)  \Vert
             \partial_3 V (t)
        \Vert_{L^2(\Omega)^2}^2
        + \phi_{\partial_3^{-1}L^2, 2} (t)
        + \delta_{\partial_3^{-1}L^2} \Vert
            |\tilde{V}| |\nabla \tilde{V}|
        \Vert_{L^2(\Omega)}^2.
    \end{split}
\end{align}
By the estimates (\ref{eq_var_V_H1}), (\ref{eq_tilde_V_L4}), and (\ref{eq_L2_del3V}), we obtain
\begin{align*}
    & \Vert
        \nabla_H \overline{V} (t)
    \Vert_{L^2(\Torus^2)^4}^2
    + \int_0^t
        \Vert
            \Delta_H \overline{V} (s)
        \Vert_{L^2(\Torus^2)^2}^2
        + \Vert
            \nabla_H \overline{\Pi} (t)
        \Vert_{L^2(\Torus^2)^2}^2
    ds \\
    & + \Vert
        \tilde{V} (t)
    \Vert_{L^4(\Omega)^2}^4
    + \int_0^t
        \Vert
            \nabla |\tilde{V} (s)|^2
        \Vert_{L^2(\Omega)^3}^2
        + \Vert
            |\tilde{V}(ts)| |\nabla \tilde{V}(s)|
        \Vert_{L^2(\Omega)}^2
    ds \\
    & + \Vert
         \partial_3 V (t)
    \Vert_{L^2(\Omega)^2}^2
    + \int_0^t
        \Vert
            \nabla \partial_3 V (s)
        \Vert_{L^2(\Omega^6)}^2
    ds \\
    & \leq C e^{-\mu c t + \phi_1 (t)}
    + \phi_2 (t)
\end{align*}
for some $\phi_1, \phi_2 \in L^\infty(0,T)$.

\subsubsection{$L^\infty_t H^1_x$-estimates for $V$}
Multiplying $\Delta V$ and integrating over $\Omega$ and applying the Young inequality, we have
\begin{align*}
    & \frac{\partial_t}{2} \Vert
        \nabla V(t)
    \Vert_{L^2(\Omega)^6}^2
    + \Vert
        \Delta V(t)
    \Vert_{L^2(\Omega)^2}^2 \\
    & \leq - \mu \Vert
        \nabla V(t)
    \Vert_{L^2(\Omega)^6}^2
    + C\Vert
        u \cdot \nabla V
    \Vert_{L^2(\Omega)^2}^2
    + C \Vert
        U \cdot \nabla v
    \Vert_{L^2(\Omega)^2}^2
    + C \Vert
        U \cdot \nabla V
    \Vert_{L^2(\Omega)^2}^2 \\
    & + C \Vert
        \nabla_H \Pi
    \Vert_{L^2(\Omega)^2}^2
    + C \Vert
        F
    \Vert_{L^2(\Omega)^2}^2
    + \frac{1}{4} \Vert
        \Delta V
    \Vert_{L^2(\Omega)^2}^2 \\
    & =: I_1 + I_2 + I_3 + I_4 + I_5 + I_6 + I_7.
\end{align*}
Using the same way as \cite{HieberKashiwabara2016}, we see
\begin{align*}
    I_4
    & \leq C \Vert
        \tilde{V} \cdot \nabla_H \tilde{V}
    \Vert_{L^2(\Omega)^2}^2
    +  C \Vert
        \overline{V} \cdot \nabla_H \tilde{V}
    \Vert_{L^2(\Omega)^2}^2\\
    & +  C \Vert
        \tilde{V} \cdot \nabla_H \overline{V}
    \Vert_{L^2(\Omega)^2}^2
    +  C \Vert
        \tilde{V} \cdot \nabla_H \overline{V}
    \Vert_{L^2(\Omega)^2}^2
    + C \left \Vert
        \int_{-l}^{x_3}
            \mathrm{div}_H \, \tilde{V}
        dz \partial_3 \tilde{V}
    \right \Vert_{L^2(\Omega)^2}^2 \\
    & \leq C \Vert
        |\tilde{V}||\nabla \tilde{V}|
    \Vert_{L^2(\Omega)}^2
    + C \Vert
        \overline{V}
    \Vert_{H^1(\Torus^2)^2}^2
    \Vert
        \nabla_H \tilde{V}
    \Vert_{L^2(\Omega)^4}
    \Vert
        \nabla_H \tilde{V}
    \Vert_{H^1(\Omega)^4} \\
    & + C \Vert
        \tilde{V}
    \Vert_{L^4(\Omega)^2}^2
    \Vert
        \nabla_H \overline{V}
    \Vert_{L^2(\Torus^2)^4}
    \Vert
        \nabla_H \overline{V}
    \Vert_{H^1(\Torus^2)^4} \\
    & + C \Vert
        \overline{V}
    \Vert_{H^1(\Torus^2)^2}^2
    \Vert
        \nabla_H \overline{V}
    \Vert_{L^2(\Torus^2)^4}
    \Vert
        \nabla_H \overline{V}
    \Vert_{H^1(\Torus^2)^4} \\
    & + C \Vert
        \tilde{V}
    \Vert_{L^2(\Omega)^2}^2
    \Vert
        \nabla_H \tilde{V}
    \Vert_{H^1(\Omega)^4}
    \Vert
        \partial_3 \tilde{V}
    \Vert_{L^2(\Omega)^2}
    \Vert
        \partial_3 \tilde{V}
    \Vert_{H^1(\Omega)^2}
\end{align*}
and also
\begin{align*}
    I_2
    + I_3
    & \leq C \Vert
        v
    \Vert_{H^2(\Omega)^2}^2
    \Vert
        \nabla V
    \Vert_{L^2}^2
    + C \Vert
        \nabla_H v
    \Vert_{L^2(\Omega)^4}
    \Vert
        \nabla_H v
    \Vert_{H^1(\Omega)^4}
    \Vert
        \partial_3 V
    \Vert_{L^2(\Omega)^2}
    \Vert
        \partial_3 V
    \Vert_{H^1(\Omega)^2} \\
    & + \Vert
        V
    \Vert_{L^2(\Omega)^2}^2
    \Vert
        v
    \Vert_{H^3(\Omega)^2}^2
    + \Vert
        \nabla_H V
    \Vert_{L^2(\Omega)^4}
    \Vert
        \nabla_H V
    \Vert_{H^1(\Omega)^4}
    \Vert
        \partial_3 v
    \Vert_{L^2(\Omega)^2}
    \Vert
        \partial_3 v
    \Vert_{H^1(\Omega)^2}.
\end{align*}
Therefore, there exit positive functions $\phi_{H^1, 1}\in L^\infty(0, T)$ and $\phi_{H^1, 2} \in L^1(0, T)$ such that
\begin{align}
    & \Vert
        \nabla V(t)
    \Vert_{L^2(\Omega)^2}^2
    + \Vert
        \Delta V(t)
    \Vert_{L^2(\Omega)^2}^2
    \leq \phi_{H^1, 1} (t)
    + \int_0^t \phi_{H^1, 2} (s) ds
\end{align}
\subsubsection{$\mathbb{E}_{1, 1, 2, 2}$-estimate for $V$}
Using the $L^2_t$-$L^2_x$ maximal regularity of $\tilde{A}$, we see
\begin{align*}
    & \Vert
        V
    \Vert_{\mathbb{E}_{1,1, 2, 2}(T)} \\
    & \leq C \Vert
        (\partial_t + \tilde{A})V
    \Vert_{L^2(0, T; L^2(\Omega)^2)} \\
    & \leq C \Vert
        U \cdot \nabla V
    \Vert_{L^2(0, T; L^2(\Omega)^2)}
    + C \Vert
        U \cdot \nabla v
    \Vert_{L^2(0, T; L^2(\Omega)^2)}
    + C \Vert
        u \cdot \nabla V
    \Vert_{L^2(0, T; L^2(\Omega)^2)} \\
    & + C \mu \Vert
        J_\delta V
    \Vert_{L^2(0, T; L^2(\Omega)^2)}
    + C \Vert
        F
    \Vert_{L^2(0, T; L^2(\Omega)^2)}.
\end{align*}
Since
\begin{align*}
    & \Vert
        U \cdot \nabla V
    \Vert_{L^2(0, T; L^2(\Omega)^2)} \\
    & \leq C \Vert
        V
    \Vert_{L^2(0, T; H^2(\Omega)^2)}
    \Vert
        \nabla V
    \Vert_{L^\infty(0, T; H^2(\Omega)^6)}
    + C \Vert
        \nabla V
    \Vert_{L^\infty(0, T; H^2(\Omega)^6)}
    \Vert
        \nabla V
    \Vert_{L^2(0, T; H^1(\Omega)^6)}, \\
    & \Vert
        U \cdot \nabla v
    \Vert_{L^2(0, T; L^2(\Omega)^2)} \\
    &\leq C \Vert
        V
    \Vert_{L^2(0, T; H^2(\Omega)^2)}
    \Vert
        \nabla v
    \Vert_{L^2(0, T; L^2(\Omega)^6)} \\
    & + C \Vert
        \nabla V
    \Vert_{L^2(0, T; L^2(\Omega)^6)}^{\frac{1}{2}}
    \Vert
        \nabla V
    \Vert_{L^\infty(0, T; H^1(\Omega)^6)}^{\frac{1}{2}}
    \Vert
        \nabla v
    \Vert_{L^\infty(0, T; H^2(\Omega)^6)}^{\frac{1}{2}}
    \Vert
        \nabla v
    \Vert_{L^\infty(0, T; H^1(\Omega)^6)}^{\frac{1}{2}},\\
    & \Vert
        u \cdot \nabla V
    \Vert_{L^2(0, T; L^2(\Omega)^2)} \\
    & \leq C \Vert
        v
    \Vert_{L^2(0, T; H^2(\Omega)^2)}
    \Vert
        \nabla V
    \Vert_{L^\infty(0, T; H^2(\Omega)^6)} \\
    & + C \Vert
        \nabla v
    \Vert_{L^\infty(0, T; H^2(\Omega)^6)}^{\frac{1}{2}}
    \Vert
        \nabla v
    \Vert_{L^\infty(0, T; H^1(\Omega)^6)}^{\frac{1}{2}}
    \Vert
        \nabla V
    \Vert_{L^2(0, T; L^2(\Omega)^6)}^{\frac{1}{2}}
    \Vert
        \nabla V
    \Vert_{L^2(0, T; H^1(\Omega)^6)}^{\frac{1}{2}},
\end{align*}
we conclude $V \in \mathbb{E}_{1,1, 2, 2, T}$ for $T > 0$.
Moreover, we find $V \in H^{2,2} (0, T; L^2(\Omega)^2) \cap H^{1,2} (0, T; H^{2, 2}(\Omega)^2$ from $F \in H^{1, 2}(0, T; L^2(\Omega)^2)$ and Proposition \ref{prop_local_wellposedness_maximal_regularity} for $p = q = 2$.

\begin{proof}[Proof of Theorem \ref{thm_main_thoerem}]
    Theorem is the direct consequence of Lemma \ref{lem_pre_of_main_theorem} and a priori bounds in $H^2(\Omega)$.
\end{proof}

\begin{remark}
    The nudging type DA seems not to be used as a practical way for weather or ocean prediction.
    Scientists and engineers in these fields use other types of DA, $e.g.$, the ensemble Kalman filter (EnKF), 3D-VAR, 4D-VAR, etc.
    From the mathematical point of view, one reason may be the regularity of the solution.
    We consider the system like
    \begin{align} \label{eq_u_j+1=Fu_j}
        u(t_j) = F(u(t_j)).
    \end{align}
    In the 3D-VAR scenario, for example, we modify the estimate $\tilde{u}_j$ and error covariance $\tilde{\sigma}_j$ at $t_j>0$ when we obtain the observation $H u_j + (\text{noise})$ at $t_j$, where $H$ is an observation operator and $u_j$ is the true state at $t_j$.
    Then we forward $\tilde{u}_j$ to $t_{j+1}$ by (\ref{eq_u_j+1=Fu_j}), and repeat the procedures of 3D-VAR.
    If we consider the parabolic equation $e.g.$ the Navier-Stokes equations and the primitive equations in a smooth domain, even if $\tilde{u}_j$ is not regular, the linear part $e^{-tA} \tilde{u}_j$ is immediately regularized and then becomes smooth for time and spatial variables.
    On the other hand, in the nudging type DA, we insert the effect of the observation to the right-hand side of the equation as an external force.
    Even if the true solution is smooth, the observation is not always smooth.
    This results the solution to the DA equation gain at most first order time regularity and second order special regularity $i.e.$ $\tilde{u} \in H^{2,p}(0, T;L^q(\Omega)) \cap L^p(0, T;H^{2, q}(\Omega))$ if observation belongs to $L^p(0,T; L^q(\Omega))$.
    The solution $\tilde{u}$ is not smooth enough as some weather and ocean prediction systems based on numerical simulations assume.
    When we seek the solution by numerical simulations, the regularity of the solution is crucial to the accuracy.
    Some ocean model systems require fourth or more order spacial approximation, which means the solution belongs to at least $H^{4, q}(\Omega)$ or $C^4$ regularity for the solution to the DA equation.
    These systems are not compatible with the DA equation based on the nudging equation.
\end{remark}

\begin{appendices}
\section{$L^2_t H^3_x$ estimate for the solution to the primitive equations}
In this appendix, we show the $L^2_t H^3_x$ a priori estimate
\begin{align} \label{eq_L2_H3}
    \Vert
        \Delta v (t)
    \Vert_{L^2(\Omega)^2}
    + \int_0^t
        \Vert
            \nabla \Delta v (t)
        \Vert_{L^2(\Omega)^6}
    ds
    < \infty
\end{align}
for all $t>0$ if $f \in L^2_t H^1_x ((0, \infty) \times \Omega)$.
Although Giga et al \cite{GigaGriesHieberHusseinKashiwabara2017_analiticity} proved $L^\infty_t H^2_x$-estimates and the lower order estimates, they did not show $L^2_t H^3_x$- a priori estimates.
Let $\psi \in H^1_t L^2_{\overline{\sigma}, x}((0, \infty) \times \Omega) \cap  L^2_t H^2_{\sigma, x} ((0, \infty)\times \Omega)$ be solution the hydrostatic Stokes Equations
\begin{align} \label{eq_hydrostatic_Stokes_appendix}
    \begin{split}
        \partial_t \psi + A_2 \psi
        & = g \in L^2_t H^1_{\overline{\sigma}, x} ((0, \infty) \times \Omega), \\
        \psi(0)
        & = \psi_0 \in H^2_{\overline{\sigma}}(\Omega),
    \end{split}
\end{align}
with boundary conditions (\ref{eq_bound_conditions}).
Assume $\psi$ satisfies
\begin{align}
    \int_0^t
        \Vert
            \nabla \partial_t \psi (s)
        \Vert_{L^2 (\Omega)^6}
    ds
    < \infty
\end{align}
for all $t > 0$.
We denote $D^h_j \psi (x) = (\psi(x + h e_j) - \psi(x))/h$ for $j = 1, 2, 3$ and $h > 0$.
Then $D^h_j \psi$ satisfies (\ref{eq_hydrostatic_Stokes_appendix}) with an external force $D^h_j g$ and initial data $D^h_j \psi_0$.
We find
\begin{align*}
    & \int^t_0
        \Vert
            \Delta D^h_j \psi
        \Vert_{L^2(\Omega)^2}^2
    ds
    \leq C \int^t_0
        \Vert
            A_2 D^h_j \psi
        \Vert_{L^2(\Omega)^2}^2
    ds \\
    & \leq C \int^t_0
        \Vert
            D^h_j \partial_t \psi
        \Vert_{L^2(\Omega)^2}^2
    ds
    + C \int^t_0
        \Vert
            D^h_j g
        \Vert_{L^2(\Omega)^2}^2
    ds\\
    & \leq C \int^t_0
        \Vert
            \partial_j \partial_t \psi
        \Vert_{L^2(\Omega)^2}^2
    ds
    + C \int^t_0
        \Vert
            \partial_j g
        \Vert_{L^2(\Omega)^2}^2
    ds.
\end{align*}
Since the right-hand side is uniform for $h$, we take a $\limsup$ to get
\begin{align*}
    \int^t_0
        \Vert
            \partial_j \psi
        \Vert_{H^2(\Omega)^2}^2
    ds
    \leq C \int^t_0
        \Vert
            \partial_j \partial_t \psi
        \Vert_{L^2(\Omega)^2}^2
    ds
    + C \int^t_0
        \Vert
            \partial_j g
        \Vert_{L^2(\Omega)^2}^2
    ds.
\end{align*}
The reader refers to Section 3.1.5 and 4.2.7 of the book \cite{Sohr2001} by Sohr.
Therefore, we see
\begin{align*}
    & \int_0^t
        \Vert
            \partial_j v
        \Vert_{H^2(\Omega)^2}^2
    ds \\
    & \leq  C \int_0^t
        \Vert
            \partial_t \partial_j v
        \Vert_{L^2(\Omega)^2}^2
    ds
    + C \int_0^t
        \Vert
            \partial_j \left(
                v \cdot \nabla_H v + w \partial_3 v
            \right)
        \Vert_{L^2(\Omega)^2}^2
    ds
    + C \int_0^t
        \Vert
            \partial_j f
        \Vert_{L^2(\Omega)^2}^2
    dx.
\end{align*}
The first term is bounded from \cite{GigaGriesHieberHusseinKashiwabara2017_analiticity}.
The Sobolev inequalities and the interpolation inequalities yield
\begin{align*}
    & \int_\Omega
        \left|
            \nabla v \cdot \nabla_H v
        \right|^2
    dx
    \leq C \Vert
        \nabla v
    \Vert_{L^4(\Omega)^6}^2
    \leq C \Vert
        \nabla v
    \Vert_{L^2(\Omega)^6}^{3/2}
    \Vert
        \nabla v
    \Vert_{H^1(\Omega)^6}^{1/2}, \\
    & \int_\Omega
        \left|
            v \cdot \nabla \nabla_H v
        \right|^2
    dx \\
    & \leq C \Vert
        v
    \Vert_{L^\infty(\Omega)^2}
    \Vert
        \nabla^2 v
    \Vert_{L^2(\Omega)^6}
    \leq C \Vert
        v
    \Vert_{H^1(\Omega)^2}^{\frac{1}{2}}
    \Vert
        v
    \Vert_{H^2(\Omega)^2}^\frac{1}{2}
    \Vert
        v
    \Vert_{\dot{H}^2(\Omega)}.
\end{align*}
By the same way, we see
\begin{align*}
    \int_\Omega
        \left |
            \nabla w \partial_3 v
        \right |^2
    dx
    & \leq C \Vert
        \nabla v
    \Vert_{L^2(\Omega)^6}
    \Vert
        \nabla v
    \Vert_{H^1(\Omega)^6}
    \Vert
        \partial_3 v
    \Vert_{L^2(\Omega)^2}
    \Vert
        \partial_3 v
    \Vert_{H^1(\Omega)^2},\\
    & \leq C \Vert
        v
    \Vert_{H^1(\Omega)^2}^2
    \Vert
        \nabla v
    \Vert_{H^1(\Omega)^6}^2\\
    \int_\Omega
        \left |
            w \nabla \partial_3 v
        \right |^2
    dx
    & \leq C \Vert
        \nabla_H v
    \Vert_{L^2(\Omega)^4}
    \Vert
        \nabla_H v
    \Vert_{H^1(\Omega)^4}
    \Vert
        \partial_3 v
    \Vert_{L^2(\Omega)^2}
    \Vert
        \partial_3 v
    \Vert_{H^1(\Omega)^2}\\
    & \leq C \Vert
        v
    \Vert_{H^1(\Omega)^2}^2
    \Vert
        \nabla v
    \Vert_{H^1(\Omega)^6}^2.
\end{align*}
Using the Young inequality, we conclude (\ref{eq_L2_H3}).

\section{Remarks for Semigroup Based Approach}
The main result of this paper is a mathematical justification of the DA in $L^p$-$L^q$ based maximal regularity settings.
However, the analytic semigroup approach such as Hieber and Kashiwabara \cite{HieberKashiwabara2016} can be applicable.
As in \cite{HieberKashiwabara2016}, we set
\begin{align}
    \begin{split}
        & \mathcal{S}_{q, \eta, T}
        = \Set{
            v \in C([0, T]; H^{2/q, q}_{\overline{\sigma}}(\Omega)) \cap C((0, T]; H^{1 + 1/q, q}_{\overline{\sigma}}(\Omega))
        }{\Vert v \Vert_{\mathcal{S}_{q, \eta, T}} < \infty}, \\
        & \Vert v \Vert_{\mathcal{S}_{q, \eta, T}}
        := \sup_{0 < t < T} e^{\eta t} \Vert
            v(t)
        \Vert_{H^{2/q, q}_{\overline{\sigma}}(\Omega)}
        + \sup_{0 < t < T} e^{\eta t} t^{1/2 - 1/2q} \Vert
            v(t)
        \Vert_{H^{1 + 1/q, q}_{\overline{\sigma}}(\Omega)}, \\
        & \widetilde{\mathcal{S}}_{q, \eta, T}
        = \Set{
            v \in C((0, T]; H^{1 + 1/q, q}_{\overline{\sigma}}(\Omega)^2)
        }{\Vert v \Vert_{\widetilde{\mathcal{S}}_{q, \eta, T}} < \infty}, \\
        & \Vert v \Vert_{\widetilde{\mathcal{S}}_{q, \eta, T}}
        := \sup_{0 < t < T} e^{\eta t} t^{1/2 - 1/2q} \Vert
            v(t)
        \Vert_{H^{1 + 1/q, q}_{\overline{\sigma}}(\Omega)}.
    \end{split}
\end{align}
We set $X_q = \Set{\varphi \in H^{2/q, q}_{\overline{\sigma}}(\Omega)^2}{\varphi|_{\Gamma_b = 0}}$.
Using this kind of method based on analytic semigroup theory as \cites{HieberKashiwabara2016,HieberHusseingKashiwabara2016}, we can show
\begin{theorem} \label{thm_supplement_thoerem}
    Let $1 < p < \infty$ and $0 < \alpha < 1$.
    Let $v_0 \in X_q$ be a initial data.
    Let $f \in H^1_{loc} (0, \infty; L^q(\Omega)^2)$ satisfy
    \begin{align*}
        \sup_{t>0} e^{\gamma_0 t} \Vert
            f(s)
        \Vert_{L^q(\Omega)^2}
        + \sup_{0 < s < t} e^{\gamma_0 s} s^{1 - \frac{1}{q}} \Vert
            f (s)
        \Vert_{L^q(\Omega)^2}
        & < \infty,\\
        \Vert
            f
        \Vert_{H^1_{loc}(0, \infty; L^2(\Omega)^2 \cap L^q(\Omega)^2)}
        + \Vert
            e^{\gamma_0 t} f
        \Vert_{L^2(0, \infty; H^1(\Omega)^2)}
        & < \infty,
    \end{align*}
    for some constant $\gamma_0 > 0$.
    Assume $v \in C(0, \infty; X_q)$ is the solution to (\ref{eq_primitive}), which obtained by \cite{HieberHusseingKashiwabara2016} with zero temperature and salinity, satisfying
    \begin{gather} \label{eq_bounds_for_v_2}
        \Vert
            v
        \Vert_{\mathcal{S}_{q, \gamma_1, \infty}}
        < \infty
    \end{gather}
    for some constant $C_0>0$ and $\gamma_1 < \gamma_0$.
    Then there exit $\mu_0, \delta_0>0$, if $\mu \geq \mu_0$ and $\delta \leq \delta_0$, there exits a unique solution $V \in C(0, \infty;X_{1/q, p, q})$ to (\ref{eq_nudging}) such that
    \begin{align*}
        \Vert
            v
        \Vert_{\mathcal{S}_{q, {\mu_\ast}, \infty}}
        < \infty
    \end{align*}
    for some constants $\gamma_0 < \mu_\ast < \gamma_1$ and $C>0$.
    Moreover, $V$ satisfies
    \begin{align*}
        \Vert
            \partial_t V (t)
        \Vert_{L^q(\Omega)^2}
        + \Vert
            V (t)
        \Vert_{D(\tilde{A}_{q, \mu})}
        = O(e^{- \mu_\ast t /2}).
    \end{align*}
\end{theorem}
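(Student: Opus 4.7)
The plan is to adapt the Fujita--Kato / Kato-type iteration carried out in Hieber--Kashiwabara \cite{HieberKashiwabara2016} and Hieber--Hussein--Kashiwabara \cite{HieberHusseingKashiwabara2016}, but with the unperturbed hydrostatic Stokes semigroup $e^{-tA_q}$ replaced by the perturbed analytic semigroup $e^{-t\tilde{A}_{\mu,q}}$ furnished by Lemma \ref{lem_H_infty_for_tilde_A}. The exponentially stable bound $\Vert e^{-t\tilde{A}_{\mu,q}} \Vert_{L^q \to L^q} \leq C e^{-\mu_\ast t}$ together with the pointwise fractional-power estimate of Proposition \ref{prop_pointwise_estimate_semigroup} means that the exponential weight $e^{\mu_\ast t}$ appearing in the norm $\Vert \cdot \Vert_{\mathcal{S}_{q,\mu_\ast,T}}$ is absorbed for free by the linear part.

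First I would recast the difference equation (\ref{eq_diff}) in Duhamel form,
\[
    V(t) = e^{-t\tilde{A}_{\mu,q}} V_0 - \int_0^t e^{-(t-s)\tilde{A}_{\mu,q}} P \mathcal{N}(V, v)(s)\, ds + \int_0^t e^{-(t-s)\tilde{A}_{\mu,q}} P F(s)\, ds,
\]
where $\mathcal{N}(V, v) = u \cdot \nabla V + U \cdot \nabla v - U \cdot \nabla V$. Combining Proposition \ref{prop_pointwise_estimate_semigroup} with $\theta = 1/2 + 1/(2q)$ and the bilinear estimate of Proposition \ref{prop_bilinear_estimate_sobolev} in $H^{1+1/q,q}(\Omega)$, I would verify that the Duhamel operator is a contractive self-map of a small ball in $\mathcal{S}_{q,\mu_\ast,T}$ whenever either $T$ or the data $(V_0, F, v_0)$ are small. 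For this step, $\widetilde{\mathcal{S}}_{q,\mu_\ast,T}$ is the natural space for estimating $U \cdot \nabla V$ and $u \cdot \nabla V$, while $\mathcal{S}_{q,\gamma_1,\infty}$ carries the basic flow $v$, and the factorization $e^{-\mu_\ast(t-s)} e^{-\mu_\ast s} = e^{-\mu_\ast t}$ coming from $\mu_\ast < \gamma_1$ passes the exponential weight from the integrand to the output.

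For global existence and the stated decay, I would rely on the $H^2$ a priori bounds proved in Section \ref{sec_a_priori_estimate} together with Lemma \ref{lem_pre_of_main_theorem}. These supply a time $t_0 > 0$ at which $\Vert V(t_0)\Vert_{X_q}$ is arbitrarily small; restarting the local theory above at $t_0$ with weight $e^{\mu_\ast (t - t_0)}$ prolongs the solution to $[t_0,\infty)$, and patching with the bounded local solution on $[0,t_0]$ gives $\Vert V\Vert_{\mathcal{S}_{q,\mu_\ast,\infty}} < \infty$. The decay $\Vert \partial_t V(t)\Vert_{L^q} + \Vert V(t)\Vert_{D(\tilde{A}_{\mu,q})} = O(e^{-\mu_\ast t/2})$ follows by differentiating the Duhamel formula in $t$, splitting the convolution integral into $[0, t/2]$ and $[t/2, t]$, and applying Proposition \ref{prop_pointwise_estimate_semigroup} with $\theta = 1$ on each piece; the time regularity coming from $f \in H^1_{loc}$ is used exactly as in \cite{HieberKashiwabara2016} to control $\partial_t V$. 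The factor $1/2$ in the exponent is the standard loss incurred by this splitting.

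The main obstacle is the closing of the bilinear estimates in $\mathcal{S}_{q,\mu_\ast,T}$ with the mixed exponential weight, in particular for the term $U \cdot \nabla v$ in which one factor decays like $e^{-\gamma_1 t}$ and the other like $e^{-\mu_\ast t}$. The convolution
\[
    \int_0^t (t-s)^{-\tfrac12 - \tfrac{1}{2q}} e^{-\mu_\ast (t-s)} s^{-(1 - \tfrac{1}{q})} e^{-(\gamma_1 - \mu_\ast) s}\, ds
\]
must be bounded uniformly in $t$; this forces the strict separation $\mu_\ast < \gamma_1 < \gamma_0$ built into the hypothesis and is handled by a standard Beta-function/Gronwall argument as in \cite{HieberKashiwabara2016}. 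Apart from this bookkeeping, every ingredient --- the bounded $H^\infty$-calculus of $\tilde{A}_{\mu,q}$, the bilinear estimate in Proposition \ref{prop_bilinear_estimate_sobolev}, and the $H^2$ a priori bound of Section \ref{sec_a_priori_estimate} --- is already in place.
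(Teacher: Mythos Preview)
Your proposal is correct and follows essentially the same route as the paper: the Duhamel formulation with $e^{-t\tilde{A}_{\mu,q}}$, the Fujita--Kato contraction in $\widetilde{\mathcal{S}}_{q,\mu_\ast,T}$ (the paper's Lemma~\ref{lem_local_or_small_data_wellposedness_semigroup_settings}), the $H^2$ a priori bounds of Section~\ref{sec_a_priori_estimate} to obtain smallness at a late time, and the $[0,t/2]$--$[t/2,t]$ splitting for the $D(\tilde{A}_{\mu,q})$ decay (the paper's Proposition~\ref{prop_exponential_stability_semigroup_with_additional_regularity}). One small clarification: for the last step the paper does not literally differentiate the Duhamel formula or invoke $f \in H^1_{loc}$, but instead uses the H\"older-in-time regularity of the right-hand side (this is the role of the exponent $\alpha$ in the hypotheses) to control $\int_{t/2}^t \tilde{A}_{\mu,q} e^{-(t-s)\tilde{A}_{\mu,q}} P\bigl(f(s)-f(t)\bigr)\,ds$ in the classical way; the required H\"older continuity of the nonlinear terms follows from $V \in \mathcal{S}_{q,\mu_\ast,\infty}$ together with Proposition~\ref{prop_bilinear_estimate_sobolev}.
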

\begin{remark}
    The assumption on the regularity for the initial data in Theorem \ref{thm_supplement_thoerem} is stronger than Theorem \ref{thm_main_thoerem} since $H^{2/q, q}(\Omega) \hookrightarrow B^{2/q}_{q, p}(\Omega)$ for $p > 2$.
\end{remark}

We show the local well-posedness in $C(0,T_\ast ; L^q(\Omega)^2)$ frame work with the analytic semi-group $e^{- t \tilde{A}}$ and the Fujita-Kato principle.
The mild solution is given by
\begin{align} \label{eq_int_eq_of_V}
    \begin{split}
        V (t)
        &= e^{- t \tilde{A}} V_0
        + \int_0^t
            e^{- (t - s) \tilde{A}} P \left(
                - U (s) \cdot \nabla V (s) + u (s) \cdot \nabla V (s) + U (s) \cdot \nabla v (s)
            \right)
        ds \\
        & + \int_0^t
            e^{- (t - s) \tilde{A}} P F(s)
        ds \\
        & =: N_1(V, v)
        + N_2(F)
        := N(V, v)
    \end{split}
\end{align}

\begin{lemma}\label{lem_local_or_small_data_wellposedness_semigroup_settings}
    Let $T>0$, $0 \leq \tilde{\mu} \leq \mu_\ast$, and $V_0, v_0 \in H^{2/q, q}(\Omega)^2$.
    Let $f \in C(0, T; L^q(\Omega)^2)$ satisfy
    \begin{align*}
        \sup_{0 < s < T} e^{\gamma_0 s} s^{1 - \frac{1}{q}} \Vert
            f (s)
        \Vert_{L^q(\Omega)^2}
        < \infty,
    \end{align*}
    for some constant $\gamma_0 > 0$.
    Let $v$ satisfy the same assumptions as Theorem \ref{thm_supplement_thoerem}. 
    Then, there exists $T_0 > 0$, if $T \leq T_0$, the integral equations (\ref{eq_int_eq_of_V}) admits the unique solution $V \in C(0,T; H^{2/q, q}_{\overline{\sigma}}(\Omega)^2)$ such that
    \begin{align}
        \Vert
            V
        \Vert_{\mathcal{S}_T}
        \leq C.
    \end{align}
    Moreover, there exits $\epsilon_0>0$, if $\Vert v_0 \Vert_{H^{2/q, q}(\Omega)^2}, \sup_{0 < s < T} e^{\gamma_0 s} s^{1 - \frac{1}{q}} \Vert f (s) \Vert_{L^q(\Omega)^2}\leq \varepsilon_0$, it can be taken $T = \infty$.
\end{lemma}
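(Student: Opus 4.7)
The plan is a Fujita-Kato contraction argument for the mild formulation (\ref{eq_int_eq_of_V}) in the Kato-type space $\mathcal{S}_{q,\tilde\mu,T}$, leveraging the two tools already in place: the analyticity and smoothing of the semigroup $e^{-t\tilde{A}_{\mu,q}}$ (Lemma \ref{lem_H_infty_for_tilde_A} and Proposition \ref{prop_pointwise_estimate_semigroup}) together with the Sobolev bilinear estimate of Proposition \ref{prop_bilinear_estimate_sobolev}.

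First I would control the free part. Since $\tilde{A}_{\mu,q}$ admits bounded $H^\infty$-calculus with angle below $\pi/2$, its fractional power domains realize the Sobolev scale $D(\tilde{A}_{\mu,q}^{\theta}) \hookrightarrow H^{2\theta,q}_{\overline{\sigma}}(\Omega)$ up to the respective boundary traces. Combined with Proposition \ref{prop_pointwise_estimate_semigroup} and interpolation, this yields
\begin{align*}
    \|e^{-t\tilde{A}_{\mu,q}}V_0\|_{H^{2/q,q}(\Omega)^2} &\leq C e^{-\mu_\ast t}\|V_0\|_{H^{2/q,q}(\Omega)^2},\\
    \|e^{-t\tilde{A}_{\mu,q}}V_0\|_{H^{1+1/q,q}(\Omega)^2} &\leq C t^{-(1/2-1/(2q))} e^{-\mu_\ast t}\|V_0\|_{H^{2/q,q}(\Omega)^2},
\end{align*}
which matches the $\mathcal{S}_{q,\tilde\mu,T}$-norm as long as $\tilde\mu \leq \mu_\ast$.

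Next I would handle the nonlinear and forcing terms. Proposition \ref{prop_bilinear_estimate_sobolev} with $s=0$ gives the pointwise bound
\begin{align*}
    \|U\cdot\nabla V + u\cdot\nabla V + U\cdot\nabla v\|_{L^q(\Omega)^2}
    \leq C\bigl(\|V\|_{H^{1+1/q,q}} + \|v\|_{H^{1+1/q,q}}\bigr)\|V\|_{H^{1+1/q,q}}.
\end{align*}
Coupling this with the semigroup smoothing at exponent $\theta = 1/2+1/(2q)$ reduces $N_1(V,v)$ to a Beta-type convolution of $(t-s)^{-(1/2+1/(2q))} e^{-\mu_\ast(t-s)}$ against $s^{-(1-1/q)} e^{-2\tilde\mu s}$ (for the quadratic term in $V$) and $s^{-(1-1/q)} e^{-(\tilde\mu+\gamma_1)s}$ (for the cross term involving $v$). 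Using $\tilde\mu\leq\mu_\ast$ and $\tilde\mu\leq\gamma_1$, the standard convolution estimates give
\begin{align*}
    \|N_1(V,v)\|_{\mathcal{S}_{q,\tilde\mu,T}}
    \leq C\bigl(\|V\|_{\mathcal{S}_{q,\tilde\mu,T}} + \|v\|_{\mathcal{S}_{q,\gamma_1,T}}\bigr)\|V\|_{\mathcal{S}_{q,\tilde\mu,T}}.
\end{align*}
The forcing contribution $N_2(F)$ with $F = f - J_\delta f$ is treated analogously, using the uniform $L^q$-boundedness of $J_\delta$ from (\ref{eq_J}) and the weighted hypothesis $\sup_s e^{\gamma_0 s}s^{1-1/q}\|f(s)\|_{L^q}<\infty$.

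With these three estimates, the map $V\mapsto N(V,v)$ is a self-map of a ball of radius $R$ in $\mathcal{S}_{q,\tilde\mu,T}$ provided $R$ and the data-dependent quantities are small, and the same bilinear calculation applied to $N_1(V_1,v) - N_1(V_2,v)$ shows strict contractivity. Smallness of $R$ is obtained either by shrinking $T$ (the free and forcing contributions to the $\mathcal{S}$-norm vanish as $T\to 0^+$ by absolute continuity of the defining integrals) or by directly requiring smallness of $\|V_0\|_{H^{2/q,q}}$ and $\sup_s e^{\gamma_0 s} s^{1-1/q}\|f(s)\|_{L^q}$; in the latter regime all bounds are uniform on $[0,\infty)$ and the construction extends to $T=\infty$. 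Banach's fixed point theorem then yields the unique $V$. The main technical obstacle is the calibration of the exponential weight $e^{\tilde\mu t}$ with the singular polynomial weight $t^{1/2-1/(2q)}$ so that the convolution bound is uniform in $T$; this is precisely the role of the constraints $\tilde\mu\leq\mu_\ast$ and $\tilde\mu\leq\gamma_1$, i.e., the decay rate of $e^{-t\tilde{A}_{\mu,q}}$ and that of the underlying basic flow must dominate the weight carried into the fixed-point space.
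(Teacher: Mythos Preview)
Your approach is the same Fujita--Kato contraction as the paper's, and the semigroup and bilinear estimates you invoke are exactly the right ones. There is, however, one step that does not go through as written: you say that for the short-time result ``the free and forcing contributions to the $\mathcal{S}$-norm vanish as $T\to 0^+$.'' This is false for the free part in the full $\mathcal{S}_{q,\tilde\mu,T}$-norm, since $\sup_{0<t<T}\|e^{-t\tilde A_{\mu,q}}V_0\|_{H^{2/q,q}}\to\|V_0\|_{H^{2/q,q}}$ as $T\to 0$, not to $0$. Consequently you cannot make the radius $R$ of the ball in $\mathcal{S}_{q,\tilde\mu,T}$ small by shrinking $T$, and the self-map/contraction argument as you set it up would not close for arbitrary data.

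The paper avoids this by running the fixed point in the auxiliary space $\widetilde{\mathcal{S}}_{q,\tilde\mu,T}$ (only the weighted $H^{1+1/q,q}$ component), where the free contribution $\sup_{0<t<T}t^{1/2-1/(2q)}\|e^{-t\tilde A_{\mu,q}}V_0\|_{H^{1+1/q,q}}$ \emph{does} vanish as $T\to0$; this is the seminorm that actually appears in the quadratic estimate, so the contraction closes there. Membership in the full space $\mathcal{S}_{q,\tilde\mu,T}$ is then obtained a posteriori from the $H^{2/q,q}$ estimate on $N_1$. You should restructure your argument the same way. A secondary point: the statement asserts $V\in C(0,T;H^{2/q,q}_{\overline\sigma})$, and the paper spends some effort verifying continuity in $t$ of the Duhamel integrals (splitting into a short-time piece and a piece controlled by dominated convergence); you do not address this, and while it is routine it should at least be mentioned.
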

\begin{proof}
    We prove $N : \mathcal{S}_T \rightarrow \mathcal{S}_T$ is a contraction mapping if $T$ or $v_0$ is small.
    We find from Proposition \ref{prop_bilinear_estimate_sobolev} that
    \begin{align} \label{eq_bound_H_2_q}
        \begin{split}
            & e^{\tilde{\mu}t} \Vert
                N_1(V, v)
            \Vert_{H^{2/q, q}(\Omega)^2} \\
            & \leq \int_0^t
                e^{\tilde{\mu}t} e^{- \mu_\ast (t - s)} (t - s)^{ - \frac{1}{q}} \\
            &\times \left(
                    \Vert
                        U (s) \cdot \nabla V (s)
                    \Vert_{L^q(\Omega)^2}
                    + \Vert
                        u (s) \cdot \nabla V (s)
                    \Vert_{L^q(\Omega)^2}
                    + \Vert
                        U (s) \cdot \nabla v (s)
                    \Vert_{L^q(\Omega)^2}
                \right)
            ds \\
            & \leq C \int_0^t
                e^{\tilde{\mu} (t - s)} e^{ - \mu_\ast (t - s)} (t - s)^{ - \frac{1}{q}} s^{- 1 - 1/q}
            ds\\
            & \times \left(
                (\sup_{0 < s < t} e^{\tilde{\mu}s} s^{\frac{1}{2} - \frac{1}{2q}}\Vert
                    V (s)
                \Vert_{H^{1 + 1/q, q}(\Omega)^2})^2
            \right. \\
            & \left.
                + \sup_{0 < s < t} e^{\tilde{\mu}s} s^{\frac{1}{2} - \frac{1}{2q}}\Vert
                    V (s)
                \Vert_{H^{1 + 1/q, q}(\Omega)^2}
                \sup_{0 < s < t} e^{\tilde{\mu}s} s^{\frac{1}{2} - \frac{1}{2q}}\Vert
                    v (s)
                \Vert_{H^{1 + 1/q, q}(\Omega)^2}
            \right)\\
            & \leq C (\sup_{0 < s < t} e^{\tilde{\mu}s} s^{\frac{1}{2} - \frac{1}{2q}}\Vert
                V (s)
            \Vert_{H^{1 + 1/q, q}(\Omega)^2})^2 \\
            & + C \sup_{0 < s < t} e^{\tilde{\mu}s} s^{\frac{1}{2} - \frac{1}{2q}}\Vert
                V (s)
            \Vert_{H^{1 + 1/q, q}(\Omega)^2}
            \sup_{0 < s < t} e^{\tilde{\mu}s} s^{\frac{1}{2} - \frac{1}{2q}}\Vert
                v (s)
            \Vert_{H^{1 + 1/q, q}(\Omega)^2}.
        \end{split}
    \end{align}
    We find from Propositions \ref{prop_pointwise_estimate_semigroup} and \ref{prop_bilinear_estimate_sobolev} that
    \begin{align*}
        & e^{\tilde{\mu}t} \Vert
            N_1(V, v)
        \Vert_{H^{1 + 1/q, q}(\Omega)^2} \\
        & \leq \int_0^t
            e^{\tilde{\mu}t} e^{- \mu_\ast (t - s)} (t - s)^{ - \frac{1}{2} - \frac{1}{2q}} \\
        & \quad \quad \times \left(
                \Vert
                    U (s) \cdot \nabla V (s)
                \Vert_{L^q(\Omega)^2}
                + \Vert
                    u (s) \cdot \nabla V (s)
                \Vert_{L^q(\Omega)^2}
                + \Vert
                    U (s) \cdot \nabla v (s)
                \Vert_{L^q(\Omega)^2}
            \right)
        ds \\
        & \leq C t^{- \frac{1}{2} + \frac{1}{2q}} (\sup_{0 < s < t} e^{\tilde{\mu}s} s^{\frac{1}{2} - \frac{1}{2q}} \Vert
            V (s)
        \Vert_{H^{1 + 1/q, q}(\Omega)^2})^2 \\
        & + C \sup_{0 < s < t} e^{\tilde{\mu}s} s^{\frac{1}{2} - \frac{1}{2q}}\Vert
            V (s)
        \Vert_{H^{1 + 1/q, q}(\Omega)^2}
        \sup_{0 < s < t} e^{\tilde{\mu}s} s^{\frac{1}{2} - \frac{1}{2q}}\Vert
            v (s)
        \Vert_{H^{1 + 1/q, q}(\Omega)^2}.
    \end{align*}
    Similarly we have
    \begin{align*}
        & e^{\tilde{\mu}t} \Vert
            N_1(V_1, v) - N_1(V_2, v)
        \Vert_{H^{1 + 1/q, q}(\Omega)^2} \\
        & \leq C t^{- \frac{1}{2} + \frac{1}{2q}} \sup_{0 < s < t} e^{\tilde{\mu}s} s^{\frac{1}{2} - \frac{1}{2q}}\Vert
            V_1 (s) - V_2 (s)
        \Vert_{H^{1 + 1/q, q}(\Omega)^2} \\
        & \times \left(
            \Vert
                V_1 (s)
            \Vert_{\widetilde{\mathcal{S}}_{q, \tilde{\mu}, T}}
            + \Vert
                V_2 (s)
            \Vert_{\widetilde{\mathcal{S}}_{q, \tilde{\mu}, T}}
            + \Vert
                v (s)
            \Vert_{\widetilde{\mathcal{S}}_{q, \tilde{\mu}, T}}
        \right).
    \end{align*}
    Since $e^{- t \tilde{A}}$ is analytic, we see
    \begin{align*}
        e^{\tilde{\mu}t} \Vert
            e^{- t \tilde{A}} V_0
        \Vert_{H^{2/q, q}(\Omega)^2}
        & \leq C \Vert
            V_0
        \Vert_{H^{2/q, q}(\Omega)^2}, \\
        e^{\tilde{\mu}t} \Vert
            e^{- t \tilde{A}} V_0
        \Vert_{H^{1 + 1/q, q}(\Omega)^2}
        & \leq C t^{- \frac{1}{2} + \frac{1}{2q}} \Vert
            V_0
        \Vert_{H^{2/q, q}(\Omega)^2}.
    \end{align*}
    Moreover, since $D(\tilde{A})$ is densely embedded into $H^{s, q}(\Omega)^2$ for $s \in [0, 2)$, we see
    \begin{align*}
        \Vert e^{- t \tilde{A}} V_0 \Vert_{H^{2/q, q}(\Omega)^2} 
        \rightarrow 0
        \quad \text{as} \quad
        t \rightarrow 0.
    \end{align*}
    We find from Proposition \ref{prop_pointwise_estimate_semigroup} that
    \begin{align*}
        & e^{\tilde{\mu}t} \left \Vert
            \int_0^t
                e^{- (t - s) \tilde{A}} F (s)
            ds
        \right \Vert_{H^{2/q, q}(\Omega)^2} \\
        & \leq C \int_0^t
            e^{\tilde{\mu}t} e^{- \mu_\ast (t - s)} (t - s)^{- \frac{1}{q}} \Vert
                F (s)
            \Vert_{L^q(\Omega)^2}
        ds \\
        & \leq C \sup_{0 < s < t} e^{\tilde{\mu}s} s^{1 - \frac{1}{q}} \Vert
            F (s)
        \Vert_{L^q(\Omega)^2},
    \end{align*}
    and
    \begin{align*}
        & e^{\tilde{\mu}t} \left \Vert
            \int_0^t
                e^{- (t - s) \tilde{A}} F (s)
            ds
        \right \Vert_{H^{1 + 1/q, q}(\Omega)^2} \\
        & \leq \int_0^t
            e^{\tilde{\mu}t} e^{- \mu_\ast (t - s)} (t - s)^{ - \frac{1}{2} - \frac{1}{2q}} \Vert
                F (s)
            \Vert_{L^q(\Omega)^2}
        ds \\
        & \leq C t^{- \frac{1}{2} + \frac{1}{2q}} \sup_{0 < s < t} e^{\tilde{\mu}s} s^{1 - \frac{1}{q}} \Vert
            F (s)
        \Vert_{L^q(\Omega)^2}.
    \end{align*}
    We show continuity of $\int_0^t e^{- (t - s) \tilde{A}} P F (s) ds$ with respect to $t$.
    For $h > 0$
    \begin{align*}
        & \int_0^{t + h}
            e^{- (t + h - s) \tilde{A}} P F (s)
        ds
        - \int_0^t
            e^{- (t - s) \tilde{A}} P F (s)
        ds \\
        & = \int_t^{t + h}
            e^{- (t + h - s) \tilde{A}} P F (s)
        ds
        + \int_0^t
            \left(
                e^{- h \tilde{A}} - \mathrm{id}
            \right) e^{- (t - s) \tilde{A}} P F (s)
        ds \\
        & =: I_1 + I_2.
    \end{align*}
    By the similar estimates as above, we see
    \begin{align*}
        & e^{\tilde{\mu}t} \left \Vert
            I_1
        \right \Vert_{H^{2/q, q}(\Omega)^2}
        \leq C \int_t^{t + h}
            (t - s)^{- \frac{1}{q}} s^{- 1 + \frac{1}{q}}
        ds
        \sup_{0 < s < t} e^{\tilde{\mu}s} s^{1 - \frac{1}{q}} \Vert
            F (s)
        \Vert_{L^q(\Omega)^2}.
    \end{align*}
    Since the integrand is in $L^1(t, t + h)$ uniformly for $h \in (0, h)$, we have $\Vert I_1 \Vert_{H^{2/q, q}(\Omega)^2} \rightarrow 0$ as $h \rightarrow 0$ for $t \geq 0$.
    \begin{align*}
        & e^{\tilde{\mu}t} \left \Vert \left(
                e^{- h \tilde{A}} - \mathrm{id}
            \right) e^{- (t - s) \tilde{A}} P F (s)
        \right \Vert_{H^{2/q, q}(\Omega)^2} \\
        & \leq C (t - s)^{- \frac{1}{q}} s^{- 1 + \frac{1}{q}} \sup_{0 < s < t} e^{\tilde{\mu}s} s^{1 - \frac{1}{q}} \Vert
            F (s)
        \Vert_{L^q(\Omega)^2}
        \in L^1(0,t).
    \end{align*}
    Lebesgue's dominated convergence theorem yields $\left \Vert I_2 \right \Vert_{H^{2/q, q}(\Omega)^2} \rightarrow 0$ for $t \geq 0$.
    Using the similar way, we have $t^{\frac{1}{2} - \frac{1}{2q}}\left \Vert I_1 \right \Vert_{H^{1 + 1/q, q}(\Omega)^2} + t^{\frac{1}{2} - \frac{1}{2q}}\left \Vert I_2 \right \Vert_{H^{1 + 1/q, q}(\Omega)^2} \rightarrow 0$ for $t > 0$.
    The case $h<0$ can be treated similarly.
    Therefore, we see continuity of $\int_0^t e^{- (t - s) \tilde{A}} P F (s) ds$ on $t$.
    We also have continuity $N_1(V, v)$ on $t$ by the similar way.
    Combining the above estimates, we obtain the quadratic inequality
    \begin{align*}
        & \Vert
            N(V, v)
        \Vert_{\widetilde{\mathcal{S}}_{\tilde{\mu}, q, T}} \\
        & \leq C_2 \Vert
            V
        \Vert_{\widetilde{\mathcal{S}}_{\tilde{\mu}, q, T}}^2
        + C_1 \Vert
            V
        \Vert_{\widetilde{\mathcal{S}}_{\tilde{\mu}, q, T}}
        \Vert
            v
        \Vert_{\widetilde{\mathcal{S}}_{\tilde{\mu}, q, T}} \\
        & + C_0 \left(
            \sup_{0 < t < T} e^{\tilde{\mu}t} t^{\frac{1}{2} - \frac{1}{2q}} \Vert
                e^{- t \tilde{A}} V_0
            \Vert_{H^{1 + 1/q, q}(\Omega)^2}
            + C \sup_{0 < s < t} e^{\tilde{\mu}s} s^{1 - \frac{1}{q}} \Vert
                F (s)
            \Vert_{L^q(\Omega)^2}
        \right).
    \end{align*}
    We assume $\Vert v \Vert_{\widetilde{\mathcal{S}}_{\tilde{\mu}, q, T}}
\leq \frac{1}{2C_1}$ is sufficiently small.
    The quadratic estimate implies, if we take $T>0$ so small or $\Vert V_0 \Vert_{H^{2/q, q}(\Omega)^2}$ and $\sup_{0 < s < t} e^{\tilde{\mu}s} s^{1 - \frac{1}{q}} \Vert F (s) \Vert_{L^q(\Omega)^2}$ so small that
    \begin{align*}
        &  R
        := 16 C_2 C_0 \left(
            \sup_{0 < t < T} e^{\tilde{\mu}t} t^{\frac{1}{2} - \frac{1}{2q}} \Vert
                e^{- t \tilde{A}} V_0
            \Vert_{H^{1 + 1/q, q}(\Omega)^2}
            + C \sup_{0 < s < t} e^{\tilde{\mu}s} s^{1 - \frac{1}{q}} \Vert
                F (s)
            \Vert_{L^q(\Omega)^2}
        \right)
        \leq \frac{1}{2},
    \end{align*}
    then
    \begin{align*}
        \Vert
            V
        \Vert_{\widetilde{\mathcal{S}}_{\tilde{\mu}, q, T}}
        \leq \frac{
            1 - \sqrt{
                1 - R
            }
        }{2 C_2}
        =: R_\ast.
    \end{align*}
    We find $N (\cdot, v)$ is an self-mapping on $\Set{V \in \widetilde{\mathcal{S}}_{\tilde{\mu}, q, T}}{\Vert V \Vert_{\widetilde{\mathcal{S}}_{\tilde{\mu}, q, T}} < R_\ast}$ and if we take $R$ and $\Vert v \Vert_{\widetilde{\mathcal{S}}_{\tilde{\mu}, q, T}}$ sufficiently small again, $N (\cdot, v)$ is a contraction mapping.
    Banach's fixed point theorem implies there exits a unique mild solution $V$ to (\ref{eq_int_eq_of_V}) in $\widetilde{\mathcal{S}}_{\tilde{\mu}, q, T}$.
    The estimate (\ref{eq_bound_H_2_q}) implies $N (V, v) \in \mathcal{S}_{\tilde{\mu}, q, T}$, then $V \in \mathcal{S}_{\tilde{\mu}, q, T}$.
\end{proof}
We improve the regularity of the solution $V$ to (\ref{eq_int_eq_of_V}) for regular $F$.

\begin{proposition} \label{prop_exponential_stability_semigroup_with_additional_regularity}
    Let $0 \leq \tilde{\mu} \leq \mu_\ast$, $T > 0$, and $1 < q < \infty$.
    Let $f \in C^\alpha(0, T; L^q(\Omega)^2)$ such that
    \begin{gather}\label{eq_pointwise_estimate_f}
        \begin{split}
            \Vert
                f (t)
            \Vert_{L^q(\Omega)^2}
            \leq C t^{- \beta} e^{-\gamma t}, \\
            \Vert
                f (t + \tau) - f (t)
            \Vert_{L^q(\Omega)^2}
            \leq C \tau^{\alpha} t^{- \beta} e^{-\gamma t},
        \end{split}
    \end{gather}
    for $0 < \alpha < 1$ and $\beta, \gamma > 0$.
    Set $\phi$ such that
    \begin{align*} 
        \phi(t)
        = \int_{0}^t
            e^{- (t - s)\tilde{A}_{\mu, q}} P f
        ds.
    \end{align*}
    Then $\phi$ satisfies
    \begin{align*}
        \Vert
            \phi (t)
        \Vert_{L^q(\Omega)^2}
        \leq C t^{1 - \beta} (
            e^{- \gamma t}
            + e^{- \mu_\ast t/2}
        )
    \end{align*}
    and
    \begin{align*}
        &\Vert
            \partial_t \phi (t)
        \Vert_{L^q(\Omega)^2}
        + \Vert
            \tilde{A}_{\mu, q} \phi (t)
        \Vert_{L^q(\Omega)^2} \\
        &\leq C (
            t^{\alpha - \beta}e^{- \gamma t/2}
            + t^{- \beta} e^{- (\mu_\ast /2 + \gamma) t}
            + t^{- \beta} e^{- \mu_\ast t /2}
        )
    \end{align*}
    for some constant $C > 0$.
    Especially, if $\mu_\ast \leq \gamma$, it follows that
    \begin{align*}
        \Vert
            \partial_t \phi (t)
        \Vert_{L^q(\Omega)^2}
        + \Vert
            \tilde{A}_{\mu, q} \phi (t)
        \Vert_{L^q(\Omega)^2}
        = O(t^{-\beta} e^{\mu_\ast/2}).
    \end{align*}
\end{proposition}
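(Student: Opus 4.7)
The plan is to estimate each quantity directly from the Duhamel representation, using the exponential semigroup decay from Lemma~\ref{lem_H_infty_for_tilde_A} together with the singular kernel estimate from Proposition~\ref{prop_pointwise_estimate_semigroup}, and combining them with the pointwise and H\"older bounds~(\ref{eq_pointwise_estimate_f}).

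First I would handle $\Vert \phi(t)\Vert_{L^q}$. Using $\Vert e^{-(t-s)\tilde{A}_{\mu,q}} P\Vert \le C e^{-\mu_\ast(t-s)}$ and the pointwise bound on $f$, the task reduces to estimating the scalar convolution $\int_0^t e^{-\mu_\ast(t-s)} s^{-\beta} e^{-\gamma s}\,ds$. I would split the interval at $t/2$: on $[0,t/2]$ factor out $e^{-\mu_\ast t/2}$ and use $\int_0^{t/2} s^{-\beta} e^{-\gamma s}\,ds \le C t^{1-\beta}$ (since $\beta<1$); on $[t/2,t]$ factor out $e^{-\gamma t/2}$ and $(t/2)^{-\beta}$, then compute $\int_{t/2}^t e^{-\mu_\ast(t-s)}\,ds$ and use $(1-e^{-\mu_\ast t/2}) \le \min(1,\mu_\ast t/2)$ to recover the $t^{1-\beta}$ factor uniformly in $t$. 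Summing the two pieces yields the bound $Ct^{1-\beta}(e^{-\gamma t}+e^{-\mu_\ast t/2})$.

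For $\Vert \partial_t \phi\Vert_{L^q}$ and $\Vert \tilde{A}_{\mu,q}\phi\Vert_{L^q}$ I would rely on the classical H\"older trick. Since $\partial_t\phi + \tilde{A}_{\mu,q}\phi = Pf$, it suffices to estimate $\tilde{A}_{\mu,q}\phi$, which I would split as
\begin{align*}
    \tilde{A}_{\mu,q}\phi(t)
    &= \int_0^t \tilde{A}_{\mu,q} e^{-(t-s)\tilde{A}_{\mu,q}} \bigl(Pf(s)-Pf(t)\bigr)\,ds
        + (I - e^{-t\tilde{A}_{\mu,q}}) Pf(t).
\end{align*}
Proposition~\ref{prop_pointwise_estimate_semigroup} with $\theta=1$ gives $\Vert \tilde{A}_{\mu,q} e^{-(t-s)\tilde{A}_{\mu,q}}\Vert \le C(t-s)^{-1} e^{-\mu_\ast(t-s)}$, and the second line of~(\ref{eq_pointwise_estimate_f}), rewritten as $\Vert f(s)-f(t)\Vert_{L^q} \le C(t-s)^\alpha s^{-\beta} e^{-\gamma s}$, turns the first integral into a convergent convolution of type $\int_0^t (t-s)^{\alpha-1} e^{-\mu_\ast(t-s)} s^{-\beta} e^{-\gamma s}\,ds$ since $\alpha>0$. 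Splitting again at $t/2$ produces $t^{\alpha-\beta} e^{-\gamma t/2}$ from the $[t/2,t]$ piece and $t^{-\beta} e^{-(\mu_\ast/2+\gamma)t}$ from the $[0,t/2]$ piece (absorbing a polynomial factor of $t$ into the exponential). The boundary term is controlled by $(1+Ce^{-\mu_\ast t})\Vert f(t)\Vert_{L^q}\le Ct^{-\beta}(e^{-\gamma t} + e^{-\mu_\ast t}e^{-\gamma t})$, whose dominant contribution is of the form $t^{-\beta} e^{-\mu_\ast t/2}$ after using $e^{-\gamma t}\le e^{-\mu_\ast t/2}+e^{-\gamma t}$ as in the statement. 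Assembling gives the claimed bound.

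The main obstacle, as I see it, is bookkeeping rather than any deep technical difficulty: one has to track two different decaying exponentials ($\gamma$ from the forcing, $\mu_\ast$ from the semigroup) together with two polynomial singularities (at $s=0$ from $f$, at $s=t$ from the kernel), and the symmetric split at $t/2$ forces the factor of $1/2$ appearing in the exponents of the conclusion. The second assertion (the case $\mu_\ast \le \gamma$) then follows by dominating $e^{-\gamma t/2}$ and $e^{-(\mu_\ast/2+\gamma)t}$ by $e^{-\mu_\ast t/2}$ and noting that the $t^{\alpha-\beta}$ and $t^{-\beta}$ factors combine into the single bound $t^{-\beta}e^{-\mu_\ast t/2}$ after absorbing powers of $t$ into any slightly smaller exponential decay.
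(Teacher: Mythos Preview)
Your approach is essentially the paper's: Duhamel formula, semigroup decay from Lemma~\ref{lem_H_infty_for_tilde_A} and Proposition~\ref{prop_pointwise_estimate_semigroup}, the classical H\"older add--subtract trick, and a split at $t/2$. One concrete difference in the order of operations is worth pointing out, because it explains a small bookkeeping slip in your sketch.

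The paper splits the integral at $t/2$ \emph{before} adding and subtracting $f(t)$, so the H\"older trick is used only on $[t/2,t]$ (their $I_1$), while on $[0,t/2]$ they estimate $\int_0^{t/2}\tilde A_{\mu,q}e^{-(t-s)\tilde A_{\mu,q}}Pf(s)\,ds$ directly via $(t-s)^{-1}\le 2/t$ and $e^{-\mu_\ast(t-s)}\le e^{-\mu_\ast t/2}$, which gives the clean contribution $t^{-\beta}e^{-\mu_\ast t/2}$. The middle term $t^{-\beta}e^{-(\mu_\ast/2+\gamma)t}$ does not come from the $[0,t/2]$ piece at all; it comes from the paper's $I_2=(I-e^{-(t/2)\tilde A_{\mu,q}})Pf(t)$, namely from the semigroup part acting on $f(t)$.

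In your version you apply the H\"older trick on all of $[0,t]$ and then split. The two decompositions regroup to the same thing, but your attribution of terms is off: on $[0,t/2]$ the bound $\|f(s)-f(t)\|\le C(t-s)^\alpha s^{-\beta}e^{-\gamma s}$ cannot produce $e^{-(\mu_\ast/2+\gamma)t}$, since $e^{-\gamma s}\approx 1$ near $s=0$. What that piece really yields is of order $t^{\alpha-\beta}e^{-\mu_\ast t/2}$, which for large $t$ carries an extra $t^\alpha$ compared with the stated $t^{-\beta}e^{-\mu_\ast t/2}$; this is exactly the ``polynomial factor'' you propose to absorb, but absorbing it costs a bit of the exponential rate. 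If you want the estimate exactly as written, follow the paper's order: invoke the H\"older continuity only on $[t/2,t]$, where it is needed to cancel the $(t-s)^{-1}$ singularity, and treat $[0,t/2]$ by the direct kernel bound.
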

\begin{proof}
    We first observe
    \begin{align*}
        & \left \Vert
            \int_{0}^t
                e^{- (t - s)\tilde{A}_{\mu, q}} P f
            ds
        \right \Vert_{L^q(\Omega)^2}\\
        & \leq \int_{t/2}^t
            s^{- \beta} e^{- \gamma s}
        ds
        + \int_0^{t/2}
            e^{- \mu_\ast t/2} s^{- \beta}
        ds \\
        & \leq C t^{1 - \beta} (
            e^{- \gamma t}
            + e^{- \mu_\ast t/2}
        ).
    \end{align*}
    We split $\tilde{A}_{\mu, q} \phi$ into three parts such that
    \begin{align*}
        \tilde{A}_{\mu, q} \phi
        & = \int_{\frac{t}{2}}^t
            \tilde{A}_{\mu, q} e^{- (t - s) \tilde{A}_{\mu, q}} P \left(
                - f (t) + f(s)
            \right)
        ds \\
        & + \int_{\frac{t}{2}}^t
            \tilde{A}_{\mu, q} e^{- (t - s) \tilde{A}_{\mu, q}} P f (t)
        ds \\
        & + \int_0^{\frac{t}{2}}
            \tilde{A}_{\mu, q} e^{- (t - s) \tilde{A}_{\mu, q}} P f (s)
        ds \\
        & = : I_1 + I_2 + I_3.
    \end{align*}
    By the pointwise estimates (\ref{eq_pointwise_estimate_f}) and the estimate
    \begin{align*}
        & \Vert
            \tilde{A}_{\mu, q} e^{- (t - s) \tilde{A}_{\mu, q}} P f (s)
        \Vert_{L^q(\Omega)^2} \\
        & \leq C e^{- \mu_\ast (t - s)} (t - s)^{-1} \Vert
            f(t) - f(s)
        \Vert_{L^q(\Omega)^2} \\
        & \leq C e^{- \mu_\ast (t - s)} (t - s)^{- 1 +\alpha} s^{- \beta} e^{- \gamma s},
    \end{align*}
    we see
    \begin{align*}
        \Vert
            I_1
        \Vert_{L^q(\Omega)^2}
        \leq C \int_{\frac{t}{2}}^t
            (t - s)^{- 1 + \alpha} s^{- \beta} e^{- \gamma s}
        ds
        \leq C t^{\alpha - \beta} e^{- \gamma t /2}.
    \end{align*}
    By (\ref{eq_pointwise_estimate_f}), we find 
    \begin{align*}
        \Vert
            I_3
        \Vert_{L^q(\Omega)^2}
        \leq C \int_0^{\frac{t}{2}}
            e^{- \mu_\ast t /2} (t - s)^{- 1} s^{- \beta}
        ds
        \leq C t^{- \beta} e^{- \mu_\ast t /2}.
    \end{align*}
    Since
    \begin{align*}
        \int_{\frac{t}{2}}^{t}
            \tilde{A}_{\mu, q} e^{ - (t - s) \tilde{A}_{\mu, q}} P f(t)
        ds
        & = \int_{\frac{t}{2}}^{t}
            \frac{d}{ds} e^{ - (t - s) \tilde{A}_{\mu, q}} P f(t)
        ds \\
        & = P \left(
            e^{- t \tilde{A}_{\tilde{\mu}, q}} f (t) - e^{- \frac{t}{2} \tilde{A}_{\tilde{\mu}, q}} f (t)
        \right)
    \end{align*}
    and $\tilde{A}_{\mu, q}$ is analytic, we have
    \begin{align*}
        \Vert
            I_2
        \Vert_{L^q(\Omega)^2}
        \leq C t^{- \beta} e^{- (\mu_\ast /2 + \gamma) t}.
    \end{align*}
    Since $\Vert \partial_t \phi (t) \Vert_{L^q(\Omega)^2} \leq \Vert \tilde{A}_{\mu, q} \phi (t) \Vert_{L^q(\Omega)^2} + \Vert f (t) \Vert_{L^q(\Omega)^2}$, we have the conclusion.
\end{proof}

\begin{proof}[Proof of Theorem \ref{thm_supplement_thoerem}]
    By Proposition \ref{prop_bilinear_estimate_bilinear_maximal_regularity} for $p = q = 2$ we first deduce that there exists a $L^2$-solution $V \in C(0, \infty; H^1(\Omega)^2) \cap C(0, T; D(\tilde{A}_{2, \mu}))$ such that
    \begin{align*}
        \Vert
            V(t)
        \Vert_{H^1(\Omega)^2}
        \leq C e^{- \mu_\ast t}
    \end{align*}
    for some constant $C>0$.
    Moreover, we repeat the same argument as in the proof of Theorem \ref{thm_main_thoerem} for $p = q = 2$ to see that there exit small $\varepsilon > 0$ and large $T_\ast > 0$ such that $\Vert V (T_\ast) \Vert_{D(\tilde{A}_{2, \mu})} \leq \varepsilon$.
    Applying Lemma \ref{lem_local_or_small_data_wellposedness_semigroup_settings}, we have $V \in \mathcal{S}_{q, {\mu_\ast}, \infty}$.
    By assumption for $f$ and Proposition \ref{prop_bilinear_estimate_sobolev} we have
    \begin{align} \label{eq_decay_of_external_forces}
        \begin{split}
            & \Vert
                F
            \Vert_{L^2(\Omega)}
            = O (e^{- \gamma_0 t}), \\
            & \Vert
                U(t) \cdot \nabla V(t)
            \Vert_{L^2(\Omega)}
            = O(e^{- 2 \mu_\ast t}), \\
            & \Vert
                u(t) \cdot \nabla V(t)
            \Vert_{L^2(\Omega)}
            + \Vert
                U(t) \cdot \nabla v(t)
            \Vert_{L^2(\Omega)}
            = O(e^{- (\gamma_1 + \mu_\ast) t}),
        \end{split}
    \end{align}
    as $t \rightarrow \infty$.
    We use Proposition \ref{prop_exponential_stability_semigroup_with_additional_regularity} to conclude
    \begin{align} \label{eq_decay_of_Lq_solution}
        \Vert
            \partial_t V (t)
        \Vert_{L^2(\Omega)^2}
        + \Vert
            V (t)
        \Vert_{D(\tilde{A}_{2, \mu})}
        = O(e^{- \mu_\ast t /2}).
    \end{align}
    Note that if we take $\mu$ sufficiently large, the decay rate $e^{- \mu_\ast t /2}$ is larger than that of $\Vert \partial_t v (t) \Vert_{L^2(\Omega)^2} + \Vert v(t) \Vert_{D(A_2)}$.
    Proposition \ref{lem_local_or_small_data_wellposedness_semigroup_settings} implies there exists a unique solution in $\mathcal{S}_{q, \mu_\ast, T_\ast}^\prime$ for small $T_\ast^\prime > 0$ and all $1 < q < \infty$.
    We consider the case $q > 2$.
    In this case we have $V(t) \in H^{1 + 1/q,q}(\Omega)^2 \hookrightarrow H^1(\Omega)^2$ for $0 < t < T_\ast^\prime$.
    We can extend $V$ as $H^1$-solution with initial data $V(T_\ast^\prime)$ such that
    \begin{gather} \label{eq_estimate_to_extend_in_L2}
        \begin{split}
            V \in \mathcal{S}_{q, \mu_\ast, T_\ast^\prime} \cap C(T_\ast^\prime/2, \infty; H^1_{\overline{\sigma}}(\Omega)) \cap C(T_\ast^\prime/2, \infty; H^2(\Omega)^2), \\
            \Vert
                V (t)
            \Vert_{H^2(\Omega)^2
            }
            = O(e^{- \mu_\ast t /2})
            \quad \text{as} \rightarrow \infty.
        \end{split}
    \end{gather}
    Since $H^2(\Omega)^2 \hookrightarrow H^{2/q, q}(\Omega)^2$, we see $V \in \mathcal{S}_{q, \mu_\ast, \infty}$.
    By Proposition \ref{prop_bilinear_estimate_sobolev}, we have the same order decay estimate as (\ref{eq_decay_of_external_forces}) even for $L^q$ cases and conclude
    \begin{align*}
        \Vert
            \partial_t V (t)
        \Vert_{L^q(\Omega)^2}
        + \Vert
            V (t)
        \Vert_{D(\tilde{A}_{q, \mu})}
        = O(e^{- \mu_\ast t /2}).
    \end{align*}
    for $q > 2$.
    We consider the case $q < 2$.
    Since $f \in L^2(0, \infty; L^2)$ we find from the same type of bootstrapping argument by \cite{HieberHusseingKashiwabara2016} that $V$ is also $L^2$-solution satisfying (\ref{eq_estimate_to_extend_in_L2}).
    Since $D(\tilde{A}_{\mu_\ast, 2}) \hookrightarrow X_q$, we can extend the solution $V$ globally in $X_q$ and find $V$ satisfies (\ref{eq_decay_of_Lq_solution}).
    We proved the theorem.
\end{proof}
\end{appendices}

\end{document}